\newcommand{\arxiv}[1]{\href{http://arxiv.org/abs/#1}{\tt
    arXiv:\nolinkurl{#1}}}
\newtheorem{theorem}{Theorem}[subsection]
\newtheorem{lemma}[theorem]{Lemma}
\newtheorem{corollary}[theorem]{Corollary}
\newtheorem{proposition}[theorem]{Proposition}
\newtheorem{claim}[theorem]{Claim}
\newtheorem{conjecture}[theorem]{Conjecture}
\newtheorem*{theorem-A}{Theorem A}
\newtheorem*{theorem-B}{Theorem B}
\newtheorem*{theorem-C}{Theorem C}
\newtheorem*{theorem-D}{Theorem D}
\newtheorem*{conjecture-A}{Conjecture A}
\newtheorem*{conjecture-B}{Conjecture B}
\newtheorem*{conjecture-C}{Conjecture C}
\newtheorem*{conjecture-D}{Conjecture D}
\newtheorem{Theorem}{Theorem}[section]
\newtheorem{Lemma}[Theorem]{Lemma}
\newtheorem{Conjecture}[Theorem]{Conjecture}
\theoremstyle{remark} 
\newtheorem{remark}[theorem]{Remark}
\newtheorem{example}[theorem]{Example}
\def\A{\mathrm{A}}
\def\G{\mathrm{G}}
\def\K{\mathrm{K}}
\def\P{\mathrm{P}}
\def\Q{\mathrm{Q}}
\def\R{\mathrm{R}}
\def\SS{\mathrm{S}}
\def\T{\mathrm{T}}
\def\X{\mathrm{X}}
\def\Z{\mathrm{Z}}
\def\bbA{\mathbb{A}}
\def\bbC{\mathbb{C}}
\def\bbN{\mathbb{N}}
\def\bbQ{\mathbb{Q}}
\def\bbZ{\mathbb{Z}}
\def\frakg{\mathfrak{G}}
\def\frakL{\mathfrak{L}}
\def\frakM{\mathfrak{M}}
\def\frakN{\mathfrak{N}}
\def\calA{\mathcal{A}}
\def\calB{\mathcal{B}}
\def\calC{\mathcal{C}}
\def\calE{\mathcal{E}}
\def\calH{\mathcal{H}}
\def\calI{\mathcal{I}}
\def\calM{\mathcal{M}}
\def\calO{\mathcal{O}}
\def\calP{\mathcal{P}}
\def\calS{\mathcal{S}}
\def\calT{\mathcal{T}}
\def\calU{\mathcal{U}}
\def\calW{\mathcal{W}}
\def\calX{\mathcal{X}}
\def\calY{\mathcal{Y}}
\def\calZ{\mathcal{Z}}
\def\frakg{\mathfrak{g}}
\def\frakh{\mathfrak{h}}
\def\frakl{\mathfrak{l}}
\def\frakn{\mathfrak{n}}
\def\frakp{\mathfrak{p}}
\def\fraks{\mathfrak{s}}
\def\fraku{\mathfrak{u}}
\def\bfA{\mathbf{c}}
\def\bfs{\mathbf{s}}
\def\bfA{\mathbf{A}}
\def\bfH{\mathbf{H}}
\def\bfK{\mathbf{K}}
\def\bfN{\mathbf{N}}
\def\bfQ{Q}
\def\bfU{\mathbf{U}}
\def\bfY{\mathbf{Y}}
\def\r{\rangle}
\def\ind{\operatorname{ind}\nolimits}
\def\NKHA{\operatorname{\bfN\bfK}\nolimits}
\def\KHA{\operatorname{\bfK}\nolimits}
\def\SH{\operatorname{SH}\nolimits}
\def\Coh{{\calC oh}}
\def\sg{{\operatorname{sg}\nolimits}}
\def\b{{\operatorname{b}\nolimits}}
\def\c{{\operatorname{c}\nolimits}}
\def\e{{\operatorname{e}\nolimits}}
\def\top{{\operatorname{top}\nolimits}}
\def\Tot{{\operatorname{Tot}\nolimits}}
\def\perf{{\operatorname{perf}\nolimits}}
\def\rk{{\operatorname{rk}\nolimits}}
\def\gr{{\operatorname{gr}\nolimits}}
\def\-{{\operatorname{-}\!}}
\def\Im{\operatorname{Im}\nolimits}
\def\Ker{\operatorname{Ker}\nolimits}
\def\Spec{\operatorname{Spec}\nolimits}
\def\Tr{\operatorname{Tr}\nolimits}
\def\Sym{\operatorname{Sym}\nolimits}
\def\Tr{\operatorname{Tr}\nolimits}
\def\wt{\operatorname{wt}\nolimits}
\def\res{\mathrm{res}}
\numberwithin{itemcounter}{subsection}
\numberwithin{equation}{section}
\title[K-theoretic Hall algebras and super quantum groups]{K-theoretic Hall algebras, quantum groups 
and super quantum groups}
\author{M. Varagnolo$^1$} 
\address{\scriptsize{$^1$~CY Cergy Paris Universit\'e,  95302 Cergy-Pontoise, France,
UMR8088 (CNRS), ANR-18-CE40-0024 (ANR).}}
\author{E. Vasserot$^2$} 
\address{\scriptsize{$^2$~Universit\'e de Paris, 75013 Paris, France, UMR7586 (CNRS), 
ANR-18-CE40-0024 (ANR), Institut Universitaire de France (IUF).
}}
\begin{document}

\begin{abstract}
We prove that the K-theoretic Hall algebra of a preprojective algebra of affine type is isomorphic
to the positive half of a quantum toroidal quantum group. 
We also compare super toroidal quantum groups of type A
with some K-theoretic Hall algebras of quivers with potential introduced recently by Padurariu 
using categories of singularities of some Landau-Ginzburg models.
The proof uses both a deformation of the K-theoretic Hall algebra and
a dimensional reduction which allows us to compare the KHA of preprojective algebras to the
K-theoretic Hall algebras of a quiver with potential.
\end{abstract}

\maketitle


\section{Introduction and notation}
\subsection{Introduction}
Let $Q$ be a Kac-Moody quiver.
Let $X$ be the variety of all finite dimensional representations of $\bfQ$ and $\calX$ its moduli stack.
Lusztig has introduced a graded additive monoidal subcategory of the bounded constructible derived category 
$D^\b_c(\calX)$ of the stack $\calX$. Its split Grothendieck group is isomorphic 
to the integral form of the quantum unipotent enveloping algebra of type $\bfQ$.
This construction can be viewed as a categorification of Ringel-Hall algebras.
There are no known analogues of Ringel-Hall algebras for super quantum groups.
K-theoretic Hall algebras yield another geometric construction of quantum groups.
In this theory quantum groups are categorified by
the derived category $D^\b(T^*\calX)$ of coherent sheaves on the cotangent dg-stack of $\calX$.
The dg-stack $T^*\calX$ is identified with the 
derived moduli stack of finite dimensional representations of the preprojective algebra $\Pi_Q$, 
which is a Calabi Yau algebra of dimension 2 whenever $Q$ is not of Dynkin type.
In an informal way, the relation between Ringel-Hall algebras and K-theoretic Hall algebras is given by
characteristic cycles of constructible sheaves.
Our goal is to prove that K-theoretic Hall algebras permit to categorify both quantum groups and super 
quantum groups.

\smallskip

We first consider the K-theoretic Hall algebra of the 
preprojective algebra $\Pi_Q$ of the quiver $Q$
following Schiffmann-Vasserot's approach.
For each dimension vector $\beta$, 
let $X_\beta\subset X$ be the variety of all representations of dimension $\beta$.
A linear group $G_\beta\times\bbC^\times$ acts on the cotangent $T^*X_\beta$.
The  $\bbC^\times$-action is prescribed by a weight function.
The $G_\beta$-action  is Hamiltonian.
The moment map $\mu_\beta:T^*X_\beta\to \frakg_\beta$ is $G_\beta\times\bbC^\times$-equivariant,
with $\bbC^\times$ acting with the weight $2$ on $\frakg_\beta$. 
The moduli stack $\calX_\beta$ 
is the quotient of $X_\beta$ by $G_\beta$.
The cotangent dg-stack of $\calX_\beta$ is the derived fiber product 
$$T^*\calX_\beta=[T^*X_\beta\times^R_{\frakg_\beta}\{0\}\,/\,G_\beta\times\bbC^\times].$$
We define $T^*\calX$ to be the sum of all dg-stacks $T^*\calX_\beta$.
The category $D^\b(T^*\calX)$ is a graded triangulated category
with a graded triangulated monoidal structure.
The internal grading is given by the $\bbC^\times$-action.
Thus the rational Grothendieck group 
$\KHA(\Pi_Q)=\G_0(T^*\calX)$ of  $D^\b(T^*\calX)$ has an algebra structure over $\bbQ[q,q^{-1}]$.
We call it the K-theoretic Hall algebra  of $\Pi_Q$.
We'll mostly consider another similar algebra $\NKHA(\Pi_Q)$, which is the Grothendieck group of
the derived category of all coherent sheaves of 
dg-modules over the dg-stack $T^*\calX$ which are supported on a closed 
substack of nilpotent elements. The role of the algebra $\NKHA(\Pi_Q)$ is explained later in the text.
 Note however that $\NKHA(\Pi_Q)$ and $\KHA(\Pi_Q)$ are
two integral forms of the same  algebra over $\bbQ(q)$. More precisely, the $\bbQ[q,q^{-1}]$-module 
$\NKHA(\Pi_Q)$ is free and we have 
$$\NKHA(\Pi_Q)\otimes_{\bbQ[q,q^{-1}]}\bbQ(q)=\KHA(\Pi_Q)\otimes_{\bbQ[q,q^{-1}]}\bbQ(q).$$

\smallskip

Let $\bfU^+$ be the Lusztig integral form of the quantum enveloping algebra 
of the loop algebra of the positive part of the Kac-Moody algebra of type $\bfQ$, i.e., 
a Drinfeld half of a toroidal quantum group in the terminology of\cite{GKV}.
Our first goal is to compare $\bfU^+$ and $\NKHA(\Pi_Q).$
This work is motivated by \cite{SVV}, where
the graded triangulated category $D^\b(T^*\calX)$ for a quiver $Q$ of type $A_1$ is compared
with a derived category of  graded
modules over the quiver-Hecke algebra of affine type $A^{(1)}_1$.
We'll prove the following theorem which is a more general version of \cite[thm~7.1]{SVV}.

\smallskip

\begin{theorem-A}\label{thm-A}
Let $Q$ be a Kac-Moody quiver with
a normal weight function of $\bar Q$.
\begin{itemize}[leftmargin=8mm]
\item[$\mathrm{(a)}$] 
There is a surjective $\bbN^{Q_0}$-graded algebra homomorphism 
$\phi:\bfU^+\to\NKHA(\Pi_Q).$
\item[$\mathrm{(b)}$] 
If $\bfQ$ is of finite or affine type but not of type $A_1^{(1)}$, then the map $\phi$ is injective.
\qed
\end{itemize}
\end{theorem-A}

\smallskip

The method of the proof goes back to some previous work of Schiffmann-Vasserot
which considers the case of Borel-Moore homology.
There,  it was observed that an essential step to prove a similar isomorphism in Borel-Moore homology was 
to deform the algebra $\NKHA(\Pi_Q)$ by allowing a big torus action,
and to prove torsion freeness of this deformation relatively to a commutative symmetric algebra
 generated by the classes of some universal bundles.
We prove a similar torsion freeness in K-theory, in Proposition \ref{prop:TFT}.
One of the main ingredient of the proof in \cite{SVV} was Davison's dimensional reduction theorem for critical 
cohomology in \cite{D17a}. 
Our proof relies on a K-theoretic version of the dimensional reduction theorem
which is proved by Isik in \cite{I12}, see also  \cite{HL20}, \cite{H17}, in the context of categories of singularities.

\smallskip

It is essential to twist the multiplication in $\NKHA(\Pi_Q)$ to compare it with $\bfU^+$.
In the case of Borel-Moore homology and Yangians the analogue of this twist is just a normalization of signs
in the definition of the product and is often omitted.

\smallskip

The restriction $Q\neq A_1^{(1)}$ is not essential, and is due to the fact that the work \cite{E03}, 
that we use in our proof, is done under the same restriction. It is possible to overpass it. To do so we must
modify the Drinfeld relations of the toroidal quantum group. We do not do it here in order that the paper remains 
of a reasonable length. 

\smallskip

If $Q$ is a Kac-Moody quiver which is not of affine type we do not know
how to modify claim (b). It is conjectured that, in this case, the toroidal Lie algebra should be replaced by the
Lie algebra defined by Maulik and Okounkov using stable envelopes.

\smallskip

We also discuss briefly the case of quivers which are not of Kac-Moody type in \S\ref{sec:GQ}.
Note that, an important step in the proof is the injectivity of an algebra homomorphism
$\rho$ from a deformed algebra $\NKHA(\Pi_Q)_\T$ to a shuffle algebra under some 
conditions on the deforming torus $\T$. More precisely 
$\T=\bbC^\times\times\A$ with an homogeneous action as in \eqref{homogeneous}, \eqref{actionT}
such that \eqref{large} holds, or $\T$ contains a one-parameter subgroup satisfying \eqref{COND}
if the quiver has no edge loops.

\smallskip

We'll also consider the deformed 
K-theoretic Hall algebra $\bfK(Q,W)_\T$ of a quiver with potential.
We'll use the approach of Padurariu, which is inspired by
the cohomological Hall algebras of Kontsevich-Soibelmann. 
In this setting, the algebra $\bfK(Q,W)_\T$ is defined as the Grothendieck group of the
category of singularities of some Landau-Ginzburg model attached to the pair $(Q,W)$.
This category is Calabi Yau of dimension 3.
The relation with the preprojective K-theoretic Hall algebra is given by dimensional reduction which is recalled later,
see also \cite{I12}, \cite{HL20}, \cite{H17}.
As explained in \S\ref{sec:DHAW}, there is a canonical algebra homomorphism
$i:\KHA(Q,W)_\T\to\KHA(Q,0)_\T$ from a K-theoretic Hall algebra of a quiver $Q$ with potential $W$ to the 
K-theoretic Hall algebra of $Q$ with the zero potential.
The injectivity of the map $\rho$ above is equivalent to the injectivity of the map $i$ in 
the triple quiver case.

\smallskip

In the second part of this paper we consider the Drinfeld half $\pmb\calU^+_\T$ of the
super toroidal quantum groups of type $A$ introduced in \cite{BM19}. 
In this case, the relevant geometric algebra is the deformed 
K-theoretic Hall algebra $\bfK(Q,W)_\T$ of a quiver with potential
which are described in
\S\ref{sec:super}. 
We'll prove the following.

\begin{theorem-B}
There is an $\bbN^{Q_0}$-graded algebra homomorphism
$\psi:\pmb\calU^+_\T\to \KHA(Q,W)_\T$.
\qed
\end{theorem-B}

We do not have a torsion freeness statement similar to the one of toroidal quantum groups.

\begin{conjecture-C}
The map $\psi$ is injective.
\qed
\end{conjecture-C}

\smallskip

Both theorems have analogues in Borel-Moore homology.
The analogue of Theorem A is already known, it is
recalled in Theorem \ref{thm:C} below and its proof uses
the geometric results in \cite{SV18}.
The analogue of Theorem B is stated in Theorem \ref{thm:D} below
and is new.

\smallskip

\subsection{Relations with recent works}
After the first version of our paper was posted, several related works appeared that we would like to mention briefly.
A revised version of \cite{P19} in \cite{P21} contains more details on the construction and basic properties of 
K-theoretic Hall algebras of quivers with potentials. Using our embedding $\rho$ in Corollary 
\ref{cor:loops}, Negut gives a system of generators of
the generic deformed K-theoretic Hall algebra $\KHA(\Pi_Q)_\T\otimes_{\R_\T}\K_\T$ in \cite{N21}.
Finally, \cite{RR21} contains a new geometric construction of Yangians of
Lie superalgebras which we expect to be related
to our critical cohomology approach in \S\ref{sec:super} and \S\ref{sec:SY}.

\smallskip

\subsection{Notation}
All schemes or Artin stacks are taken over the field $\bbC$.
A \emph{quotient stack} is an Artin stack equivalent to the stack $[X\,/\,G]$ 
where $X$ is an algebraic space
and $G$ a linear group acting on $X$. The stack $[\Spec(\bbC)\,/\,G]$ is the classifying stack of $G$, which we 
shall denote by $BG$. Unless specified otherwise, all the quotient stacks we'll consider here are of the form
$\calX=[X\,/\,G]$ with $X$ a $G$-quasi-projective scheme $X$. 
Such a stack satisfies the resolution property by the work of Thomason, i.e., 
every coherent sheaf on $\calX$ is a quotient of a vector bundle.

\smallskip

Given a small Abelian or exact category $\calC$, let $D^\b(\calC)$ be its bounded derived category.
For any Artin stack $\calX$ we abbreviate $D^\b(\calX)=D^\b(Coh(\calX))$, where $Coh(\calX)$
is the category of coherent sheaves over $\calX$.
Let $D^\perf(\calX)$ be the triangulated category of perfect complexes in $D^\b(\calX)$.
The category of singularities of $\calX$ is the Verdier quotient 
$$D^\sg(\calX)=D^\b(\calX)\,/\,D^\perf(\calX).$$

\smallskip

Let $\K_0(\calT)$ be the Grothendieck group of a triangulated category $\calT$ and
$\K_0(\calT)_\bbQ=\K_0(\calT)\otimes\bbQ$ its rational form. 
We have $\K_0(D^\b(\calC))=\K_0(\calC)$.
We define 
$$\K_0(\calX)=\K_0(D^\perf(\calX))_\bbQ
\quad,\quad
\G_0(\calX)=\K_0(D^\b(\calX))_\bbQ
\quad,\quad
\G_0^\sg(\calX)=\K_0(D^\sg(\calX))_\bbQ.
$$
For any integer $i\geqslant 0$ 
we'll also use the $i$th Grothendieck group $\G_i(\calX)$ of the category $\Coh(\calX)$.
Its topological version $\G_i(\calX)^\top$ is defined as in \cite{T88} (by inverting the Bott element).

\smallskip

For any torus $\T$ and any linear group $G$ we abbreviate $G_\T=G\times\T$. 
If $\T=\bbC^\times$ we write $G_c=G\times\bbC^\times$. 
Let $\R_G=\G_0(BG)_\bbQ$ and 
$\K_G$ be its fraction field.
Let $X_*(G)$ and $\X^*(G)$ be the set of characters and cocharacters of $G$.
We abbreviate
$$\R=\R_{\bbC^\times}=\bbQ[q,q^{-1}]
\quad,\quad
\K=\K_{\bbC^\times}=\bbQ(q).$$

\medskip

\emph{Acknowledgments.} 
Initial stages of this work were partly inspired by discussions with R. Rouquier and P. Shan.
We would like to thank them for these discussions.
We are also grateful to T. Padurariu for useful discussions relative to the material in
\S \ref{sec:DHAW}.

\medskip

\section{K-theoretic Hall algebras and quantum loop algebras}
\subsection{Coherent sheaves on derived moduli stacks}
\subsubsection{Reminder on dg-stacks}
Let first begin by a down-to-earth introduction to dg-stacks and their derived categories.
By a dg-scheme we mean a pair $(X\,,\,\calA)$ where $(X\,,\,\calO_X)$ is a (separated of finite type) scheme and 
$\calA$ is a quasi-coherent  connective graded-commutative $\calO_X$-dg-algebra on $X$,
where connective means non-positively graded.
When the sheaf $\calA$ is clear from the context we may abbreviate
$X=(X\,,\,\calA)$.
By a dg-stack we mean a pair $(\calX\,,\,\calA)$ consisting of
an Artin stack $\calX$ with a quasi-coherent  connective graded-commutative
$\calO_\calX$-dg-algebra. 
\smallskip

Given a dg-scheme $(X\,,\,\calA)$ with an action of a linear group $G$ on $X$ 
such that $\calA$, its multiplication and its
differential are $G$-equivariant, we get a dg-stack over the quotient stack $\calX=[X\,/\,G]$.
The corresponding sheaf of dg-algebras is denoted by the same symbol $\calA$.
The truncation of $(\calX\,,\,\calA)$ is the closed substack $\pi_0(\calX,\calA)=\Spec\calH^0(\calA)$.
The truncation is the left adjoint functor to the canonical inclusion 
of the category of stacks into the category of dg-stacks.
If $\calX=[X\,/\,G]$ is a quotient stack, we'll always
 assume that  $X$ is a $G$-scheme which is $G$-equivariantly quasi-projective.
Then, the resolution property insures that every $G$-equivariant 
quasi-coherent sheaf on $X$ is a quotient of a $G$-equivariant quasi-coherent sheaf which is flat over $\calO_X$.
We further impose that $\calA^{\,0}=\calO_{\calX}$ and that the dg-stack $(\calX\,,\,\calA)$
is bounded in the following sense : the $\calO_{\calX}$-module $\calA$
is locally free of finite rank. Under this hypothesis, we'll say that the dg-stack $(\calX\,,\,\calA)$ is 
\emph{almost smooth} if the scheme $X$ is smooth.

\smallskip

A quasi-coherent sheaf over the dg-stack
$(\calX\,,\,\calA)$ 
is a quasi-coherent $\calO_\calX$-module with an action of the dg-algebra 
$\calA$.
It is coherent if the cohomology sheaf $\calH(\calE)$
is coherent over $\calH(\calA)$.
Since we assumed that the dg-stack $(\calX\,,\,\calA)$ is bounded, 
a quasi-coherent sheaf $\calE$ is coherent
if and only if $\calH(\calE)$ is coherent over $\calH^0(\calO_\calX)$.
Let $\Coh(\calX\,,\,\calA)$ and $Q\Coh(\calX\,,\,\calA)$
be the categories of all coherent and quasi-coherent sheaves over a dg-stack $(\calX\,,\,\calA)$.
Let $D^\b(\calX\,,\,\calA)$ and $D(\calX\,,\,\calA)$ be their 
derived categories, i.e.,
the triangulated category obtained by inverting all quasi-isomorphisms 
in the homotopy categories of $\Coh(\calX\,,\,\calA)$  and $Q\Coh(\calX\,,\,\calA)$.
These triangulated categories do not 
not depend on the choice of the representative of  the dg-stack $(\calX\,,\,\calA)$ in its quasi-isomorphism class.
The triangulated category $D^\b(\calX\,,\,\calA)$ admits the \emph{standard $t$-structure}, 
whose heart is equivalent to
the category of coherent sheaves on the truncation $\pi_0(\calX,\calA)$. 
Let $D^\perf(\calX\,,\,\calA)$ be the smallest full triangulated subcategory of 
$D^\b(\calX\,,\,\calA)$ containing the object $\calA$ which is closed under direct summands.
The construction of the category of singularities
of an Artin stack carries over to the case of dg-stacks.
The category of singularities of the dg-stack $(\calX,\calA)$ is the Verdier quotient 
$$D^\sg(\calX\,,\,\calA)=D^\b(\calX\,,\,\calA)\,/\,D^\perf(\calX\,,\,\calA).$$
We may also define it as a dg-quotient, but we'll not need this in this paper.

\smallskip

A morphism of dg-stacks $f:(\calX\,,\,\calA)\to(\calY\,,\,\calB)$ is the data of a morphism of stacks
$f_\flat:\calX\to\calY$ and a morphism of sheaves of dg-algebras $(f_\flat)^*\calB\to\calA$.
Under the assumptions above, the left and right derived functors
$Lf^*:D(\calY\,,\,\calB)\to D(\calX\,,\,\calA)$
and
$Rf_*:D(\calX\,,\,\calA)\to D(\calY\,,\,\calB)$ exist and satisfy some standard properties such as the base change,
see, e.g., \cite{BR12}.
We'll say that the morphism $f$ is \emph{almost smooth} if $f_\flat$
is smooth and $\calA$ is flat as an $(f_\flat)^*\calB$-module. It is proper if $f_\flat$ is proper.
We'll also use the notion of \emph{quasi-smooth} morphism.
The morphism $f$ is quasi-smooth if its relative cotangent complex
is perfect of Tor-amplitude $[-1,0]$.
If $f$ is almost-smooth or quasi-smooth then the functor 
$Lf^*$ yields a functor $D^\b(\calY\,,\,\calB)\to D^\b(\calX\,,\,\calA)$.
If $f$ is proper then $Rf_*$ yields a functor $D^\b(\calX\,,\,\calA)\to D^\b(\calY\,,\,\calB)$.
We'll use the terminology \emph{l.c.i.~closed immersion} to mean a quasi-smooth closed immersion of dg-schemes
or dg-stacks, i.e., a quasi-smooth map whose truncation is a closed immersion.
By \cite[prop~2.1.10]{AG}, a morphism $f:(X,\calA)\to (Y,\calB)$ of 
dg-schemes is an l.c.i.~closed immersion if and only if
Zariski-locally on $Y$ there is a Cartesian square
$$\xymatrix{(X,\calA)\ar[d]\ar[r]^f& (Y,\calB)\ar[d]\\\{0\}\ar[r]&\bbC^n}.$$

\smallskip

Most of the dg-schemes we'll consider here are affine. 
In this case, the notions recalled above can be expressed in terms of dg-algebras.
Let $\bfA$ be a bounded finitely generated connective graded-commutative 
dg-algebra over $\bbC$, with an action
of a linear algebraic group $G$ by dg-algebra automorphisms.
Let $R\Spec(\bfA)=(X\,,\,\calA)$ be the locally dg-ring space associated with $\bfA$.
It is a $G$-equivariant dg-scheme with structural sheaf
$\calA=\bfA\otimes_{\bfA^0}\calO_X$ such that $X=\Spec(\bfA^0)$.
We consider the quotient stack $\calX=[X\,/\,G]$ with  the dg-stack
$(\calX\,,\,\calA)=[R\Spec(\bfA)\,/\,G]$.
Then the triangulated category $D^\b(\calX\,,\,\calA)$  is the derived category $D^\b(\bfA\rtimes G)$ 
of all dg-modules over the crossed product dg-algebra $\bfA\rtimes G$ 
whose cohomology is finitely generated over the graded algebra 
$H^0(\bfA).$
The morphisms are residue classes of morphisms modulo null-homotopic 
ones with all quasi-isomorphisms inverted.
There is a standard t-structure on the triangulated category $D^\b(\bfA\rtimes G)$ whose heart 
consists of the $G$-equivariant 
dg-modules $M$ over $\bfA$
such that $H^0(M)$ is finitely generated over $H^0(\bfA)$ and $H^i(M)=\{0\}$ for $i\neq 0$.
The assignment $M\mapsto H^0(M)$ defines an equivalence of Abelian categories from this heart to the category
of finitely generated modules over the crossed product $H^0(\bfA)\rtimes G$.

\smallskip

We'll also use \emph{graded} versions of the above, 
corresponding to affine dg-stacks with a $\bbC^\times$-action.
Let $G_c=G\times\bbC^\times$ with $G$ a linear algebraic group.
A $G_c$-equivariant dg-algebra is the same as a graded $G$-equivariant dg-algebra, i.e.,
a  $G$-equivariant  
bigraded algebra with a differential of bidegree
$(1,0)$ satisfying the Leibniz rule. 
For each bidegree $(i,j)$, we call $i$ the \emph{cohomological degree} and
$j$ the \emph{internal degree}. 
A $G_c$-equivariant dg-module $M$ is the same as a bigraded $G$-equivariant module.
Let $M^i_j$ be the component with cohomological degree $i$ and internal degree $j$.
The grading shift functors
$[\text{-}]$ and $\langle\text{-}\rangle$ are given by
$$(M[a]\langle b\rangle)^i_j=M^{i+a}_{j+b}
\quad,\quad
d_{M[a]\langle b\rangle}=(-1)^ad_M.$$
We may also write $$q M=M\langle -1\rangle.$$
Note that the graded dg-algebra $\bfA$ is graded-commutative relatively to the cohomological degree,
i.e., for any homogeneous elements $x$, $y$ of cohomological degrees $|x|$, $|y|$, we have
\begin{align}\label{grcom}x y=(-1)^{|x|\cdot |y|}y x.\end{align}

\smallskip

\begin{example}
For any graded vector space $V$ the
graded-symmetric algebra $\SS(V)$ is
the quotient of the tensor algebra of $V$ by the relations \eqref{grcom}.
For any scheme $X$ and any graded vector bundle $E$ on $X$ with section $s$, let
$$K(X,E,s)=\big(\SS(E^*[1])\,,\,d_s\big)$$
be the Koszul cochain complex whose differential is the contraction along $s$.
It is a graded dg-algebra over $\calO_X$.
The \emph{derived zero locus} of $s$ is the graded dg-scheme 
$$R\Z(X,E,s)=R\Spec K(X,E,s).$$
Its truncation is the scheme theoretic zero locus $\{s=0\}$ in $X$.
Given a linear group $G$ acting on $X$, $E$ such that $s$ is $G$-invariant,
we can form the vector bundle $\calE=[E\,/\,G\,]$ over the stack 
$\calX=[X\,/\,G\,]$. The derived zero locus of $s$ is 
the  graded dg-stack 
$$R\Z(\calX,\calE,s)=[R\Z(X,E,s)\,/\,G\,].$$
\end{example}

\smallskip

Finally, let us recall the \emph{dimensional reduction} following Isik \cite{I12}, see also  \cite{HL20}, \cite{H17}.
Let $G$ be a linear group acting on a scheme $X$
with quotient stack $\calX=[X\,/\,G]$.
Let $E\to X$ be a $G$-equivariant vector bundle over $X$.
Let $s$ be a $G$-invariant section of $E$.
Let $\calY=R\Z(\calX,E,s)$ be the derived zero locus of $s$.
Let $\Tot(E^*)$ be the total space of the dual vector bundle and
$w:\Tot(E^*)\to\bbC$ the $G$-invariant function given by $s$. 
Let the group $\bbC^\times$ acts on $\Tot(E^*)$ by dilatation of weight 2 along the fibers
of the projection $\pi:\Tot(E^*)\to X$.
Thus, the function $w$ is linear along the fibers $\pi$, and it is homogeneous of weight 2 relatively 
to the $\bbC^\times$-action.
The function $w$ is equivariant under the $\bbC^\times$-action by dilatation on $E^*$.
Let $G_c=G\times\bbC^\times$.
Let $\calZ_c$ be the derived zero locus of $w$, i.e.,
$$\calZ_c=R\Z([\Tot(E^*)/G_c],\calO_{[\Tot(E^*)/G_c]},w).$$ 
By \cite[thm.~4.6, rem.~4.7]{I12}, see also  \cite{HL20}, \cite{H17} for the equivariant case,
we have a triangulated equivalence
\begin{align}\label{dimred0}D^\b(\calY)=D^\sg(\calZ_c).\end{align}
The triangulated category $D^\sg(\calZ_c)$ is the \emph{graded} singularity category
of the derived stack 
$$\calZ=R\Z([\Tot(E^*)/G],\calO_{[\Tot(E^*)/G]},w).$$ 
The usual singularity category is $D^\sg(\calZ)$.
We may also write $D^{\gr,\sg}(\calZ)=D^\sg(\calZ_c)$.

\smallskip

\subsubsection{K-theory}\label{sec:K-theory}
Next, we consider the Grothendieck groups.
We abbreviate
\begin{align*}\G_0(\calX\,,\,\calA)=\K_0(D^\b(\calX\,,\,\calA))
\quad,\quad
\K_0(\calX\,,\,\calA)=\K_0(D^\perf(\calX\,,\,\calA)).
\end{align*}
Given a closed substack $\calZ$ of $\calX$, let $D^\b(\calX\,,\,\calA)_\calZ$ be the full triangulated subcategory of
$D^\b(\calX\,,\,\calA)$ consisting of all modules which are acyclic over the open subset $\calX\,\setminus\,\calZ$.
Set
$$\G_0(\calX\,,\,\calA)_\calZ=\K_0(D^\b(\calX\,,\,\calA)_\calZ).$$
By devissage, the obvious map yields the following isomorphism, see, e.g., 
\cite[cor~5.11]{J12}
\begin{align}\label{devissage}\G_0(\calZ\,,\,\calA|_\calZ)=\G_0(\calX\,,\,\calA)_\calZ.\end{align}
Since the heart of the standard t-structure on $D^\b(\calX\,,\,\calA)$ is equivalent to
the category of coherent sheaves on the truncation $\pi_0(\calX,\calA)$, see, e.g.,
\cite[\S 4.1]{KY}, we have a group isomorphism
\begin{align}\label{truncation}\G_0(\calX\,,\,\calA)=\G_0(\pi_0(\calX\,,\,\calA)).\end{align}

\smallskip

In the affine case, assume that
$\bfA$ is a bounded finitely generated connective graded-commutative 
dg-algebra over $\bbC$, with an action
of a linear algebraic group $G$ by dg-algebra automorphisms.
Let $R\Spec(\bfA)=(X\,,\,\calA)$ and the stack $\calX$ and the dg-stack
$(\calX\,,\,\calA)$ be as above.
Then the Grothendieck group
$\K_0(D^\b(\bfA\rtimes G))$ is the Grothendieck group 
of the category of all finitely generated $H^0(\bfA)\rtimes G$-modules.
We'll abbreviate $$\G_0(\bfA\rtimes G)=\K_0(D^\b(\bfA\rtimes G)).$$

\smallskip

Finally, we consider the $K$-theoretic analogue of the dimensional reduction.
Let the vector bundle $E$, the derived
stacks $\calZ_c$, $\calY$, and the functions $w_1,\dots,w_n$ be as in the section above. 
Assume that $E$ is a trivial bundle, i.e., we have $\Tot(E^*)=X\times\bbC^n$ and
$w=\sum_{i=1}^n w_ix_i$ where $x_1,\dots,x_n$ are the coordinates on $\bbC^n$
and $w_1,\dots,w_n$ are $G$-invariant regular functions on $X$. 
Let $\{w_1=\dots=w_n=0\}$ be the zero set in $\calX$ of the functions $w_1,\dots,w_n$.
From \cite[cor.~3.13]{T21} and the equivalence between (graded) matrix factorizations and
(graded) singularity categories we deduce that the Grothendieck groups
$\G_0^\sg(\calZ)$ and $\G_0^\sg(\calZ_c)$ are canonically identified.
Thus, from \eqref{dimred0} we get
the following isomorphism of Grothendieck groups
\begin{align}\label{Kdimred0}\G_0(\{w_1=\dots=w_n=0\})=\G_0^\sg(\calZ).\end{align}

\smallskip

\subsubsection{Quivers and weight functions}\label{sec:TQ1}
Let $Q=(Q_0\,,\,Q_1)$ be any quiver.
We'll always assume that $Q_0$ and $Q_1$ are finite sets.
Let $Q'$ be the quiver opposite to $Q$ and $\bar\bfQ$ its double.
We have $Q'=(Q_0\,,\,Q'_1)$ and $\bar Q=(Q_0\,,\,\bar Q_1)$ with $\bar Q_1=Q_1\sqcup Q'_1$.
Let $h\mapsto h'$ denote the bijection $Q_1\to Q'_1$ which reverses all arrows. 
The triple quiver of $Q$
is the quiver $\widetilde Q=(Q_0\,,\,\widetilde Q_1)$ such that $\widetilde Q_1=\bar Q_1\sqcup \omega(Q_0)$,
where $\omega:Q_0\to\widetilde Q_1$ is the injection 
which associates to each vertex $i$ a loop $\omega_i:i\to i$.
The source of an arrow $h$ is denoted by $h_s$, its target by $h_t$.
To simplify the notation, we'll use the following convention :
\emph{unless specified otherwise we abbreviate $i=h_s$ and $j=h_t$, i.e., the arrow is $h:i\to j$.}
Given a total ordering of the vertices of $Q$,
we set
$$Q_1^<=\{h\in Q_1\,;\,i<j\}.$$

\smallskip

Let $\T$ be a complex torus.
A \emph{weight function} of the quiver $Q$ in the abelian group
$\X^*(\T)$ is a map $Q_1\to\X^*(\T)$ such that  $h\mapsto q_h$.
Assume first that $\T=\bbC^\times$ and let $q$ be the weight one character. 
A weight function of the double quiver $\bar Q$
is \emph{homogeneous} if we have $q_hq_{h'}=q^2$ for each arrow $h\in\bar Q_1$. 
If the quiver $Q$ is of Kac-Moody type 
we'll say that the weight function of $\bar Q$ is \emph{normal} if it
is homogeneous and the following additional assumptions hold :
\begin{align}\label{normal}
\begin{split}
\bar Q_1^<&=Q_1,\\
\{q_h\,;\,h:i\to j\,,\,h\in \bar Q_1\}&=\{q^{2+a_{i,j}},q^{4+a_{i,j}},\dots,q^{-a_{i,j}}\}
\text{ with $a_{i,j}\in-\bbN$.}
\end{split}
\end{align}
We'll set $a_{i,i}=2$ for all $i$.
Now, consider a torus of the form $\T=\bbC^\times\times\A$, where $\A$ is a subtorus, and
let $q$ be the first projection $\T\to\bbC^\times$.
A weight function of $Q$ in $\X^*(\T)$ is the same as a pair of weight functions
$h\mapsto q_h$ in $\X^*(\bbC^\times)$ and  $h\mapsto t_h$ in $\X^*(\A)$.
A weight function of the double quiver $\bar Q$ in $\X^*(\T)$
is homogeneous if we have
\begin{align}\label{homogeneous}
q_hq_{h'}=q^2\quad,\quad t_ht_{h'}=1\quad,\quad h\in\bar Q_1.
\end{align}
If $Q$ is of Kac-Moody type this weight function  is normal if \eqref{normal} 
and \eqref{homogeneous} hold.

\smallskip

\subsubsection{Moduli stacks of representations of quivers}\label{sec:TQ2}

Let $\bbC Q$, $\Pi_Q$ be the path algebra and the preprojective algebra of the quiver $Q$.
Let $\beta=\sum_{i\in Q_0}b_i\,\alpha_i$ be a dimension vector in $\bbN^{Q_0}$,
where $\alpha_i$ is the Dirac function at the vertex $i$. 
Let $X_\beta$ be the space of all $\beta$-dimensional 
representations of the path algebra.
A point of $X_\beta$ is a tuple of matrices $x=(x_h\,;\,h\in Q_1)$.  
The group 
$G_\beta=\prod_{i\in Q_0}GL_{b_i}$ acts on $X_\beta$ by conjugation.
Let $\bar X_\beta$ be the space of $\beta$-dimensional representations of 
the double quiver $\bar\bfQ$, and
$\widetilde X_\beta$ be the space of $\beta$-dimensional representations of the triple 
quiver $\widetilde\bfQ$.
We have $\bar X_\beta=T^*X_\beta=X_\beta\times X'_\beta$ 
and $\widetilde X_\beta=\bar X_\beta\times\frakg_\beta$ as graded representations of $G_\beta$.
We'll identify $\frakg_\beta$ with its dual via the trace.
The $G_\beta$-action on the cotangent bundle $T^*X_\beta$ is Hamiltonian.
The moment map 
$\mu_\beta$ is the quadratic map
$x\mapsto\sum_{h\in Q_1}[x_h\,,\,x_{h'}]$.
A weight function of  $Q$ in $\X^*(\bbC^\times)$ yields a 
$\bbC^\times$-action on  $X_\beta$ such that 
\begin{align}\label{action}u\cdot x_h=q_h(u)x_h
\quad,\quad u\in\bbC^\times, x\in X_\beta, h\in Q_1.\end{align}
So, the $G_\beta$-action on $X_\beta$ extends to   
a $G_{\beta,c}$-action and yields the quotient stack 
$$\calX_\beta=[\,X_\beta\,/\,G_{\beta,c}\,].$$
We define the stacks $\bar\calX_\beta$, $\widetilde\calX_\beta$ associated with the quivers
$\bar Q$, $\widetilde Q$ in a similar way.

\smallskip

Now, assume that the assignment 
$h\mapsto q_h$ is an homogeneous weight function of $\bar Q$.
Let $\frakg^*_\beta\langle-2\rangle$ denote the coadjoint representation with
the $\bbC^\times$-action of weight 2.
Then, the moment map $\mu_\beta:\bar X_\beta\to\frakg_\beta^*\langle -2\rangle$
is $G_{\beta,c}$-equivariant. 
We can view it as a section of the vector bundle 
$$E_\beta=[\bar X_\beta\times\frakg^*_\beta\langle -2\rangle\,/\,G_{\beta,c}]\to\bar\calX_\beta.$$
We define the following dg-stack 
\begin{align*}
T^*\calX_\beta=[\,\bar X_\beta\times^R_{\frakg_\beta^*}\{0\}\,/\,G_{\beta,c}\,]
=R\Z(\bar\calX_\beta\,,\,E_\beta\,,\,\mu_\beta).
\end{align*}
The truncation of $T^*\calX_\beta$ is
$\calM_\beta=\pi_0(T^*\calX_\beta).$
It is the closed substack of $\bar\calX_\beta$ given by
\begin{align*}\calM_\beta=[\,M_\beta\,/\,G_{\beta,c}\,]
\quad,\quad M_\beta=\mu_\beta^{-1}(0).\end{align*}
Taking the sum over all dimension vectors, we abbreviate
$$T^*\calX=\bigsqcup_\beta T^*\calX_\beta\quad,\quad
\calM=\bigsqcup_\beta \calM_\beta.$$
We call $T^*\calX$ the \emph{dg-moduli stack of representations} of the preprojective algebra $\Pi_Q$.
We have
$$D^\b(T^*\calX)=\bigoplus_\beta D^\b(T^*\calX_\beta)
\quad,\quad
\G_0(T^*\calX)=\bigoplus_\beta\G_0(T^*\calX_\beta).$$
The heart of the 
standard t-structure of $D^\b(T^*\calX)$ is equivalent to the category of coherent sheaves
$\Coh(\calM)$ as a graded Abelian category.

We now define a $G_{\beta,c}$-equivariant connective graded-commutative
graded dg-algebra $\bfA_\beta$ such that
$T^*\calX_\beta=[R\Spec(\bfA_\beta)\,/\,G_{\beta,c}\,].$
As a $G_\beta$-equivariant bigraded graded-commutative algebra we set
\begin{align}\label{A}
\bfA_\beta=\SS(\bar X_\beta^*)\otimes\SS(\frakg_\beta[1]\langle 2\rangle).
\end{align}
The  differential of $\bfA_\beta$ is the contraction with the element $\mu_\beta$ of 
$\SS^2(\bar X_\beta^*)\otimes\frakg_\beta^*\langle -2\rangle.$
We have
$$\bar\calX_\beta=[\Spec(\bfA_\beta^0)\,/\,G_{\beta,c}\,]
\quad,\quad
\calM_\beta=[\Spec(H^0(\bfA_\beta))\,/\,G_{\beta,c}\,]
\quad,\quad
D^\b(T^*\calX_\beta)=D^\b(\bfA_\beta\rtimes G_{\beta,c}).$$
The direct image by the closed embedding $\calM_\beta\subset T^*\calX_\beta$
yields a group isomorphism 
\begin{align}\label{isom0}
\G_0(\calM_\beta)=\G_0(T^*\calX_\beta)=
\G_0(\bfA_\beta\rtimes G_{\beta,c}).
\end{align}

\smallskip

\subsubsection{Nilpotent representations}\label{sec:nilp}
For each flag of $Q_0$-graded vector spaces
\begin{align}\label{flag}F=(\{0\}=F_0\subset F_1\subset\cdots\subset F_r=V)\end{align}
let $\bar X(F)$ be the set of all representations
$x=(x_h\,;\,h\in Q_1)$ such that
\begin{itemize}[leftmargin=8mm]
\item $x_h(F_p)\subset F_p$ for all $h\in Q_1$ and $p\in[1,r]$,
\item $x_h(F_p)\subset F_{p-1}$ for all $h\in Q'_1$ and $p\in[1,r]$.
\end{itemize}
The elements in the set $\bigcup_F\bar X(F)$, where the union runs over the flags
$F$ such that $F_p\,/\,F_{p-1}$ is concentrated on a single vertex for each $p$,
are called \emph{strongly seminilpotent representations} in \cite[def.~3.4]{SV18}.
If the quiver $Q$ is of Kac-Moody type, then the strongly seminilpotent representations are the same as the
nilpotent ones in Lusztig sense \cite{L91}.
We'll abbreviate nilpotent for strongly seminilpotent.
We define
$$
N_\beta=M_\beta\cap\bigcup_F\bar X(F)
\quad,\quad
\Lambda_\beta=[N_\beta\,/\,G_{\beta,c}]
\quad,\quad
\Lambda=\bigsqcup_\beta\Lambda_\beta
\quad,\quad
N=\bigsqcup_\beta N_\beta
.$$
Thus $\Lambda_\beta$ is a closed substack of $\calM_\beta$.

\smallskip

Let  $D^\b(\bfA_\beta\rtimes G_{\beta,c})_\Lambda$ be the derived category
of all $G_{\beta,c}$-equivariant dg-modules over $\bfA_\beta$ 
whose cohomology is finitely generated over the graded algebra 
$H^0(\bfA_\beta)$ and is supported on $N_\beta$.
The Grothendieck group $\G_0(\bfA_\beta\rtimes G_{\beta,c})_\Lambda$ 
of $D^\b(\bfA_\beta\rtimes G_{\beta,c})_\Lambda$ is an $\R$-module such that 
$q[M]=[M\langle -1\rangle]$ for each $\bfA_\beta\rtimes G_{\beta,c}$-module $M$.
Let $D^\b(T^*\calX)_\Lambda$ be the graded triangulated category
of all coherent dg-modules on $T^*\calX$ whose cohomology is 
supported on $\Lambda$, and $\G_0(T^*\calX_\beta)_\Lambda$ its Grothendieck group.
By \eqref{devissage}, \eqref{truncation} and \eqref{isom0} there is a canonical group isomorphism
\begin{align}\label{isom2}
\G_0(T^*\calX_\beta)_\Lambda=\G_0(\Lambda_\beta)=\G_0(\bfA_\beta\rtimes G_{\beta,c})_\Lambda.
\end{align}




\smallskip

\subsection{Derived Hall algebras}
\label{sec:DHAPQ}
In this section we define a monoidal structure on the category $D^\b(T^*\calX)$.
We use the description of $T^*\calX$ as a stacky quotient of a derived spectrum mentioned above
to get an elementary formulation for this product in terms of induction/restriction of dg-modules.

\subsubsection{Induction and restriction}
Fix dimension vectors $\alpha$,  $\gamma$ with $\beta=\alpha+\gamma$.
Set 
\begin{align*}
\frakg=\frakg_\beta
\quad,\quad
G=G_\beta
\quad,\quad
X_G=X_\beta
\quad,\quad
\bfA_G=\bfA_\beta
\quad,\quad
\mu_G=\mu_\beta,
\end{align*}
\begin{align*}
\frakl=\frakg_\alpha\times\frakg_\gamma
\quad,\quad
L=G_\alpha\times G_\gamma
\quad,\quad
X_L=X_\alpha\times X_\gamma
\quad,\quad
\bfA_L=\bfA_\alpha\otimes\bfA_\gamma.
\end{align*} 
We fix a flag $(F\subset V)$ of $Q_0$-graded vector spaces of dimensions 
$$\dim F=\gamma\quad,\quad\dim V=\beta.$$
Let $P$ be the stabilizer of $F$ in $G$.
Thus $L$ is a Levi subgroup of $P$.
Let $U$ be the unipotent radical of $P$. 
Let $X_P$ be the space of all representations in $X_G$ which preserve the subspace $F$,
and $X_U$ the subspace of all representations $x$ such $x(V)\subset F$ and $x(F)=\{0\}$.
Using the quivers $Q'$ or $\bar Q$ instead of $\bfQ$, we define similarly the spaces $X'_H$, 
$\bar X_H$ 
for the group $H=L$, $U$, $P$, $G$. 
Let $\frakh$ be the Lie algebra of $H$.
For $H=L,P,G$ we consider the dg-stack
$$T^*\calX_H=[\,\bar X_H\times^R_{\frakh}\{0\}\,/\,H_c\,].$$
There is an $H_c$-equivariant graded-commutative dg-algebra $\bfA_H$ 
such that
$$T^*\calX_H
=[R\Spec(\bfA_H)\,/\,H_c\,].$$
Hence, we have
\begin{align}\label{identification}
D^\b(T^*\calX_H)=D^\b(\bfA_H\rtimes H_c).
\end{align}
It is enough to describe $\bfA_P$.
As a $P_c$-equivariant graded-commutative algebra, we set
$$\bfA_P=\SS((\bar X_P)^*\oplus\frakp[1]\langle2\rangle).$$
The dual of the obvious inclusions $\bar X_P\subset\bar X_G$ and $\frakp^*\subset\frakg^*$
gives the surjective map
$$(\bar X_G)^*\oplus\frakg[1]\langle2\rangle
\to
(\bar X_P)^*\oplus\frakp[1]\langle2\rangle.$$
Taking the symmetric tensor, we get
a surjective bigraded algebra homomorphism 
$\bfA_G\to\bfA_P.$
The restriction $\mu_G|_{\bar X_P}$ is viewed as an element $\mu_P$ in
$\SS^2((\bar X_P)^*)\otimes\frakp^*.$
The differential of $\bfA_P$ is the contraction with this element.

\smallskip

The algebra homomorphism 
$\bfA_G\to\bfA_P$ gives a $P_c$-equivariant surjective graded dg-algebra homomorphism $\bfA_G\to\bfA_P$.
Taking the derived spectra, we get a proper dg-stack homomorphism
$p:T^*\calX_P\to T^*\calX_G$
and a direct image functor
\begin{align}\label{pdi}
Rp_*:D^\b(T^*\calX_P)\to D^\b(T^*\calX_G).
\end{align}
Under the identification \eqref{identification}, this functor may be viewed as a functor
$$\ind_P^G:D^\b(\bfA_P\rtimes P_c)\to D^\b(\bfA_G\rtimes G_c).$$
It is the induction of rational representations from $P$ to $G$, 
compare to the non derived case in Remark \ref{rem:induction} below.
The dual of the obvious projections $\bar X_P\to\bar X_L$ and $\frakp^*\to\frakl^*$ give the inclusion
$$(\bar X_L)^*\oplus\frakl[1]\langle2\rangle
\to
(\bar X_P)^*\oplus\frakp[1]\langle2\rangle,$$
hence a bigraded algebra homomorphism
$\bfA_L\to\bfA_P.$
The image of $\mu_P$ by the projection
$$\SS^2((\bar X_P)^*)\otimes\frakp^*
\to\SS^2((\bar X_P)^*)\otimes\frakl^*$$
coincides with the element
$\mu_L$ in $\SS^2((\bar X_L)^*)\otimes\frakl^*$.
Thus, there is a $P_c$-equivariant dg-algebra homomorphism $\bfA_L\to\bfA_P$
such that $\bfA_P$ is flat as an $\bfA_L$-module.
It yields an almost smooth dg-stack homomorphism
$q:T^*\calX_P\to T^*\calX_L$
and the pullback functor
\begin{align}\label{spb}
Lq^*:D^\b(T^*\calX_L)\to D^\b(T^*\calX_P)
\quad,\quad
M\mapsto \bfA_P\otimes_{\,\bfA_L}\!M.
\end{align}

\smallskip

\begin{remark}\label{rem:induction}
Let $X_H=\Spec(A_H)$ be an $H$-equivariant affine scheme, for $H=P,G$, and
$\calX_H=[X_H\,/\,H]$ be the corresponding quotient stack.
Let $p:\calX_P\to\calX_G$ be a stack homomorphism.
We have a direct image functor
$p_*:Q\Coh(\calX_P)\to Q\Coh(\calX_G)$
which can be described in the following way.
Geometrically $p_*$ is the composed functor
$$\xymatrix{Q\Coh(\calX_P)\ar[r]^-{\ind_P^G}&Q\Coh([G\times_PX_P\,/\,G])\ar[r]& Q\Coh(\calX_G)}$$
where the first map is the geometric induction as in \cite[\S 5.2.16]{CG} and the second one is the direct image by 
the action map $G\times_PX_P\to X_G$.
Algebraically, for any $A_P\rtimes P$-module $M$ we have
$\ind_P^GM=(M\otimes\bbC[G])^P$ as an $A_G\rtimes G$-module,
and the latter is the induction of rational representations as in
\cite[\S I.3.3]{J}.
\end{remark}

\smallskip

\subsubsection{Twist}\label{sec:twist}
In order to compare the K-theoretic Hall algebra with quantum groups we must twist the multiplication.
To do that, we introduce a twisted version $Rp_\circ$ of the functor $Rp_*$.
Fix a total ordering of the vertices of $Q$. 
The vector space $\bar X_U^<$ of all representations $x\in\bar X_U$ such that
$x_h=0$ unless $h\in\bar Q_1^<$ admits a representation of the group
$P_c$ such that $P$ acts in the obvious way and 
$\bbC^\times$ as in \eqref{action}. Hence, the one dimensional bigraded vector space
\begin{align}\label{Omega}
\Omega_U
=\det(\bar X_U^<)^{-1}[\dim \bar X_U^<]
\end{align}
is a graded character of the group $P_c$.
We may abbreviate $\Omega=\Omega_U$.
We call it a \emph{twist}.

\smallskip

We define the \emph{twisted induction functor} from bigraded representations of $P$ to 
bigraded representations of $G$ to be
\begin{align}\label{twisted-ind}\ind_{P,\circ}^G=
\ind_P^G\big(\,\text{-}\,\otimes\, \Omega\big).
\end{align}
Replacing the induction by the twisted induction in \eqref{pdi} yields the functor 
\begin{align}\label{tpdi}
Rp_\circ:D^\b(T^*\calX_P)\to D^\b(T^*\calX_G)
\quad,\quad
M\mapsto \ind_{P,\circ}^GM,
\end{align}
Composing \eqref{tpdi} with the functor $M\mapsto \bfA_P\otimes_{\,\bfA_L}\!M$ from
$L_c$-equivariant dg-modules over $\bfA_L$ 
to $P_c$-equivariant dg-modules over $\bfA_P$, which is exact, we get
a graded triangulated functor
\begin{align}\label{indcoh}
\circ=Rp_\circ Lq^*:D^\b(T^*\calX_L)\to
D^\b(T^*\calX_G).
\end{align}

\smallskip

\begin{proposition}
The category $D^\b(T^*\calX)$, equipped with the functor $\circ$, is graded triangulated monoidal. 
\end{proposition}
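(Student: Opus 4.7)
The plan is to verify that the bifunctor on $D^\b(T^*\calX)=\bigoplus_\beta D^\b(T^*\calX_\beta)$ induced by applying $\circ$ to external products gives a graded triangulated monoidal structure. This amounts to checking biexactness in each variable, associativity up to a canonical isomorphism, and the existence of a unit.

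Triangulatedness is immediate. Since $q$ is almost smooth, $\bfA_P$ is flat over $\bfA_L$, so $Lq^{*}=\bfA_P\otimes_{\bfA_L}(-)$ is exact at the dg level; the pushforward $Rp_{*}$ is triangulated because $p$ is proper; and twisting by the graded character $\Omega_U$ of \eqref{Omega} (a shift combined with an invertible tensoring) preserves triangulatedness. Compatibility with the internal shifts $[-]$ and $\langle-\rangle$ is automatic, as every construction is $G_c$-equivariant.

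For associativity, fix dimension vectors $\alpha_1,\alpha_2,\alpha_3$ and a two-step flag $F_1\subset F_2\subset V$ of these dimensions. This produces a common Levi $L=G_{\alpha_1}\times G_{\alpha_2}\times G_{\alpha_3}$ and two chains of parabolics $L\subset P_{12}\subset P_1\subset G$ and $L\subset P_{12}\subset P_2\subset G$, where $P_k$ preserves only $F_k$ and $P_{12}$ preserves the full flag. Each iterated triple product factors through $T^*\calX_{P_{12}}$ via a zigzag of almost smooth and proper dg-stack maps. Using the explicit dg-algebras $\bfA_H$ of \S\ref{sec:DHAPQ} together with flat and proper base change for dg-stacks \cite{BR12}, both iterated compositions reduce to the single induction–restriction through the chain $L\subset P_{12}\subset G$. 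The non-formal step is matching iterated twists with the single twist: as bigraded $L$-modules, $\bar X_{U_{12}}^{<}$ splits compatibly along each chain as $\bar X_{U_k}^{<}\oplus\bar X_{U_{12}\cap L_k}^{<}$, so taking determinants and cohomological shifts as in \eqref{Omega} yields a canonical $L_c$-equivariant isomorphism
\[
\Omega_{U_1}\otimes\Omega_{U_{12}\cap L_1}\;\cong\;\Omega_{U_{12}}\;\cong\;\Omega_{U_2}\otimes\Omega_{U_{12}\cap L_2},
\]
which is the cocycle making \eqref{Omega} an associative twist. The unit is the structure sheaf in dimension vector $0$, where the parabolic and Levi collapse to the full group and $\Omega$ is trivial.

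I expect the main obstacle to be this twist-cocycle check — the bookkeeping of determinants and cohomological shifts across the two chains — but the precise form of $\Omega$ in \eqref{Omega} is engineered exactly so that this cocycle condition holds, reducing the verification to a direct computation once the dg base change is set up.
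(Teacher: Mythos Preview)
Your proof is correct and follows the same approach as the paper, which states only that ``the proof follows from base change.'' You have filled in the details the paper leaves implicit, in particular the twist-cocycle verification for $\Omega_U$ and the explicit reduction of both iterated products to induction through the full-flag parabolic $P_{12}$.
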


\begin{proof}
The proof follows from base change.
\end{proof}

\smallskip

We call the graded triangulated monoidal category $(D^\b(T^*\calX)\,,\,\circ\,)$ the
\emph{derived Hall algebra} of the preprojective algebra $\Pi_Q$.

\smallskip

\subsubsection{Nilpotence}
Let $\calS\subset\calM$ be any closed substack which is
closed under extensions of $\bbC\bar Q$-modules. The functor \eqref{indcoh} yields a functor 
$\circ:D^\b(T^*\calX_L)_\calS\to D^\b(T^*\calX_G)_\calS$, hence
a graded monoidal triangulated category $(D^\b(T^*\calX)_\calS\,,\,\circ\,)$.
The nilpotent representations form a Serre subcategory.
Thus, setting $\calS=\Lambda$,
 the functor $\circ$ in \eqref{indcoh} yields a graded triangulated monoidal category
$(D^\b(T^*\calX)_\Lambda\,,\,\circ)$.

\smallskip

\subsection{K-theoretic Hall algebras and quantum  groups}

In this section we introduce the quantum groups and the K-theoretic Hall algebra (KHA).
In order to compare them, we introduce several versions of the KHA that we'll need later :
the KHA of a preprojective algebra,
its nilpotent, deformed and twisted versions, and the KHA of a quiver with and without potential.
Then, we compare
them in the case of a triple quiver.

\subsubsection{Quantum groups}\label{sec:QLG}

Let $Q$ be a Kac-Moody quiver.
Let $\frakg_\Q$ be the Kac-Moody algebra associated with $\bfQ$ and $\frakn_\bfQ$ its positive half.
Let $L\frakg_\Q$ and $L\frakn_\bfQ$ be their loop algebras, i.e., we set
$L\frakg_\Q=\frakg_\Q[s,s^{-1}]$ and
$L\frakn_\bfQ=\frakn_\bfQ[s,s^{-1}],$
with the obvious Lie bracket.
Let $a_{i,j}$ be the coefficients of the Cartan matrix of the quiver $Q$.
Let $\bfU^+_\K$ be the quantum enveloping algebra of $L\frakn_\bfQ$. 
It is the $\K$-algebra generated by elements 
$\e_{i,n}$ with $i\in Q_0$, $n\in\bbZ$ subject to the following defining relations :

\begin{itemize}[leftmargin=8mm]
\item[$\mathrm{(a)}$] for all $i$, $j$ we have the Drinfeld relation given by
$$(q^{a_{i,j}}z-w)\,\e_i(z)\,\e_j(w)=(z-q^{a_{i,j}}w)\,\e_j(w)\,\e_i(z),$$
\item[$\mathrm{(b)}$]  for $i\neq j$ we have the Serre relation given by
$$\Sym_z\sum_{k=0}^l(-1)^k\left[\begin{matrix}l\cr k\end{matrix}\right]_q
\e_i(z_1)\dots\e_i(z_k)\,\e_j(w)\,\e_i(z_{k+1})\dots\e_i(z_l)=0\quad,\quad l=1-a_{i,j}.$$
\end{itemize}
In these relations we used the generating series
$\e_i(z)=\sum_{n\in\bbZ}\e_{i,n}z^{-n}$, the averaging operator $\Sym_z$ in the variables $z_i$'s and the following commutator
\begin{align*}
[\,a\,,\,b\,]_z=ab-zba.
\end{align*}
We'll say that an $\R$-module $M$ is \emph{separated at 1} if the submodule 
${}^\infty M$ such that ${}^\infty M=\bigcap_{n>0}(q-1)^nM$ is $\{0\}$.
We define $\widetilde\bfU^+$ to be the $\R$-subalgebra of $\bfU^+_\K$ generated by the quantum divided powers
$(\e_{i,n})^{(r)}$ with $i\in Q_0$, $n\in\bbZ$ and $r\in\bbZ_{>0}$. 
Let $\bfU^+$ be the maximal quotient of $\widetilde\bfU^+$ which is separated at 1, i.e., we define 
$\bfU^+$ to be the quotient of $\widetilde\bfU^+$ by the two-sided ideal ${}^\infty\widetilde\bfU^+$.

\smallskip

\subsubsection{Quantum groups and Shuffle algebras}\label{sec:SH}
Let $Q$ be any Kac-Moody quiver with a total ordering of the set $Q_0$.
Fix $\alpha, \gamma\in\bbN^{Q_0}$ with sum $\beta$. 
Let $a_i,$ $b_i,$ $c_i$ be the $i$th entries of the dimension vectors $\alpha,\beta,\gamma$.
We have $b_i=a_i+c_i$ for all $i\in Q_0$.
Let $W_\beta$ be the Weyl group of $G_\beta$.
It is a product of symmetric groups $W_\beta=\prod_{i\in Q_0}S_{b_i}$.
We identify the ring $\R_{G_{\beta,c}}$ with the following ring of symmetric Laurent polynomials
\begin{align}\label{R}
\R_{G_{\beta,c}}=\R[\,x_{i,s}\,,\,x_{i,s}^{-1}\,;\,i\in Q_0\,,\,s\in[1,b_i]\,]^{W_\beta}.
\end{align}
We have 
\begin{align*}
\K_{G_{\beta,c}}=\K(\,x_{i,s}\,;\,i\in Q_0\,,\,s\in[1,b_i]\,)^{W_\beta}.
\end{align*}
We write
$$W_{\alpha,\gamma}=\prod_{i\in Q_0}
\{w\in S_{b_i}\,;\,w(r)<w(s)\ \text{if}\ 0<r<s\leqslant a_i\ \text{or}\ a_i<r<s\leqslant b_i\}.$$
The averaging operator 
$\Sym_{x}:\R_{G_{\alpha,c}}\otimes_\R\R_{G_{\gamma,c}}\to\R_{G_{\beta,c}}$
takes $f$ to the sum $\sum_{w\in W_{\alpha,\gamma}}w(f).$ 
Fix a total ordering of the vertices of $Q$. We consider the rational function 
$$\zeta^\bullet_{\alpha,\gamma}\in\K(\,x_{i,s}\,;\,i\in Q_0\,,\,s\in[1,b_i]\,)^{W_\alpha\times W_\gamma}$$ 
given by
\begin{align*}
\zeta^\bullet_{\alpha,\gamma}=
\prod_{\substack{i,j\in Q_0\\i<j}}
\prod_{\substack{0<r\leqslant a_i\\b_j\geqslant s>a_j}}\frac{q^{-a_{ij}}x_{i,r}-x_{j,s}}{x_{i,r}-q^{-a_{ij}}x_{j,s}}
\cdot\prod_{i\in Q_0}\prod_{\substack{0<r\leqslant a_i\\b_i\geqslant s>a_i}}
\frac{q^{-2}x_{i,r}-x_{i,s}}{x_{i,r}-x_{i,s}}.
\end{align*}
We equip the $\K$-vector space
\begin{align}\label{SH}
\SH_\K=\bigoplus_\beta\K_{G_{\beta,c}}
\end{align}
with the shuffle product with kernel $\zeta^\bullet_{\alpha,\gamma}$. This product is given by
\begin{align}\label{shuffle1}
f\bullet g=\Sym_{x}(\zeta^\bullet_{\alpha,\gamma}\cdot( f\otimes g))
\quad,\quad f\in\K_{G_{\alpha,c}}, g\in\K_{G_{\gamma,c}}.
\end{align}
Let $\SH_\K^\bullet$ be the corresponding $\R$-algebra.
The following is well-known when $Q$ is of finite type, see \cite{E00}, \cite{Gr07}.
See Appendix \ref{app:A} for more details if $Q$ is any quiver of Kac-Moody type, 
for a more general weight function.

\smallskip

\begin{proposition}\label{prop:Shuffle}
If $Q$ is a quiver of Kac-Moody type, 
there is an $\R$-algebra homomorphism
\begin{align*}
\psi:\widetilde\bfU^+\to \SH_\K^\bullet
\quad,\quad
(\e_{i,n})^{(r)}\mapsto(x_{i,1}x_{i,2}\cdots x_{i,r})^n
\quad,\quad i\in Q_0, n\in\bbZ, r\in\bbZ_{>0}.
\end{align*}
\qed
\end{proposition}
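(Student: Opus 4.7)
The plan is to define $\psi$ on the generators by sending $\e_{i,n}$ to $x_{i,1}^n$ viewed in the $\alpha_i$-graded component $\K_{G_{\alpha_i,c}}$, to extend it multiplicatively using $\bullet$, and to check that this assignment is compatible with the three types of defining relations: the Drinfeld quadratic relations, the quantum Serre relations, and the divided power structure that defines $\widetilde\bfU^+$ inside $\bfU^+_\K$. Since $\widetilde\bfU^+$ is presented through the Drinfeld/Serre relations at the $\K$-level plus integrality of the divided powers, I would first construct $\psi$ rationally and then descend to $\R$.

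First, I would verify the Drinfeld relation. For $\alpha=\alpha_i$, $\gamma=\alpha_j$, the kernel $\zeta^\bullet_{\alpha,\gamma}$ specializes to a single rational factor, namely $\frac{q^{-a_{ij}}x_{i,1}-x_{j,1}}{x_{i,1}-q^{-a_{ij}}x_{j,1}}$ when $i<j$ (and similar factors when $i=j$ or $i>j$). Packaging the generating series $\psi(\e_i(z))=\sum_n x_{i,1}^n z^{-n}$, the products $\psi(\e_i(z))\bullet\psi(\e_j(w))$ and $\psi(\e_j(w))\bullet\psi(\e_i(z))$ differ by the ratio of kernel factors, so multiplying by $(q^{a_{ij}}z-w)$ on the left and $(z-q^{a_{ij}}w)$ on the right clears exactly those denominators, and the identity reduces to a mechanical check depending on the chosen ordering of $Q_0$.

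Second, and this is the main obstacle, I would verify the quantum Serre relations. Transporting the relation through $\psi$, one must show that in $\SH_\K^\bullet$, for $i\neq j$ and $l=1-a_{ij}$, the sum
\begin{equation*}
\Sym_{z_1,\dots,z_l}\sum_{k=0}^{l}(-1)^k\left[\begin{matrix}l\cr k\end{matrix}\right]_q \psi(\e_i(z_1))\bullet\cdots\bullet\psi(\e_i(z_k))\bullet\psi(\e_j(w))\bullet\psi(\e_i(z_{k+1}))\bullet\cdots\bullet\psi(\e_i(z_l))
\end{equation*}
vanishes identically. After writing out the iterated shuffle, this is a rational identity for a $W_{l\alpha_i+\alpha_j}$-symmetric function in $z_1,\dots,z_l,w$ whose total kernel is a product of factors $\zeta^\bullet$. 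I would establish it by checking the classical wheel-type conditions: regularity at $z_s=q^{-2}z_t$ together with the appropriate vanishing when the $z_s$ cascade through $q^{-a_{ij}}w$, combined with the homogeneity/degree bound, forces the expression to vanish. This is known for finite type by \cite{E00}, \cite{Gr07}; the extension to arbitrary Kac-Moody quivers is exactly what Appendix~\ref{app:A} will handle.

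Third, to obtain an \emph{integral} map on $\widetilde\bfU^+$ rather than only on $\widetilde\bfU^+_\K$, I would verify the divided power formula
\begin{equation*}
\psi(\e_{i,n})^{\bullet r}=[r]_q!\cdot (x_{i,1}x_{i,2}\cdots x_{i,r})^n,
\end{equation*}
which follows from the standard symmetrization identity
\begin{equation*}
\Sym_{x_1,\dots,x_r}\prod_{1\leqslant s<t\leqslant r}\frac{q^{-2}x_s-x_t}{x_s-x_t}=[r]_q!.
\end{equation*}
This simultaneously shows that the putative image $(x_{i,1}\cdots x_{i,r})^n$ of $(\e_{i,n})^{(r)}$ is a well-defined element of $\SH_\K^\bullet$ and that $\psi$ restricts to an $\R$-algebra map on the Lusztig integral form. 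The Drinfeld check is formal and the divided power check is a classical symmetrizer computation; the genuine work is in the Serre identity, i.e.\ in the combinatorial content of Appendix~\ref{app:A}.
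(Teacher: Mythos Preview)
Your outline matches the paper's structure: verify Drinfeld relations directly, verify Serre relations, and check integrality via the divided-power computation. The paper likewise defers the Serre relations to Appendix~\ref{app:A} and treats the Drinfeld relations as straightforward.

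Two points of divergence are worth noting. First, your divided-power identity
\[
\Sym_{x_1,\dots,x_r}\prod_{1\leqslant s<t\leqslant r}\frac{q^{-2}x_s-x_t}{x_s-x_t}=[r]_q!
\]
is off by a power of $q$ (already for $r=2$ one gets $q^{-1}[2]_q$), but since powers of $q$ are units in $\R$ this does not affect the integrality argument; the paper handles this implicitly and remarks on the same normalization when it identifies $\psi((\e_{i,n})^{(r)})$ with a Hall--Littlewood polynomial ``up to the product by a power of $q$.''

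Second, and more substantively, your sketched approach to the Serre relations via wheel-type vanishing conditions and degree bounds is \emph{not} what Appendix~\ref{app:A} actually does. The paper instead reduces (following \cite{Gr94}, \cite[\S 10.4]{N00}) to the single identity
\[
\sum_{h+k=l}(-1)^h\,\psi(\e_{i,0})^{\diamond(h)}\diamond\psi(\e_{j,0})\diamond\psi(\e_{i,0})^{\diamond(k)}=0,
\]
packages the left side as a function $F(x_1,\dots,x_l,n)$ with an auxiliary parameter $n$, and proves by induction on $l$ via a residue computation that $F$ is constant in the $x$'s and satisfies a recurrence forcing $F(l,0)=0$. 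Your wheel-condition strategy could in principle be made to work, but would require identifying the kernel of an appropriate evaluation map, which is a different (and for general Kac--Moody $Q$ not obviously easier) argument; the paper's residue approach is entirely self-contained.
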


The proof of Theorem \ref{thm:A} below implies that the map $\psi$ factorizes through the 
quotient $\bfU^+$ of $\widetilde\bfU^+$.
\smallskip

\subsubsection{K-theoretic Hall algebras}\label{sec:KHA}
Let $Q$ be any quiver. 
The Grothendieck groups $\G_0(T^*\calX)$ and
$\G_0(T^*\calX)_\Lambda$
are $\R$-modules such that 
$q[M]=[M\langle -1\rangle]$ for each $M$.
We choose the twist $\Omega$ as in \eqref{Omega}.
The monoidal structure on $D^\b(T^*\calX)$ and
$D^\b(T^*\calX)_\Lambda$ yields an $\R$-linear multiplication $\circ$ 
on the Grothendieck groups $\G_0(T^*\calX)$ and
$\G_0(T^*\calX)_\Lambda$. The first one is called the
\emph{K-theoretic Hall algebra} of the preprojective algebra $\Pi_Q$, and the second one the
\emph{nilpotent K-theoretic Hall algebra}. 
Let us denote them by $\bfK(\Pi_Q)$ and $\bfN\bfK(\Pi_Q)$.
For each dimension vector $\beta$ we set also
$\KHA(\Pi_Q)_\beta=\G_0(T^*\calX_\beta)$ and $\NKHA(\Pi_Q)_\beta=\G_0(T^*\calX_\beta)_\Lambda.$
The obvious functor $D^\b(T^*\calX)_\Lambda\to D^\b(T^*\calX)$
yields an $\R$-algebra homomorphism 
\begin{align}\label{nilnonil}\NKHA(\Pi_Q)\to\KHA(\Pi_Q).\end{align} 
This map is invertible if the quiver $Q$ is of finite type, because in this case all representations are nilpotent.
As $\R$-modules, we have
\begin{align}\label{isom11}
\KHA(\Pi_Q)=\G_0(\calM)
\quad,\quad
\NKHA(\Pi_Q)=\G_0(\Lambda).
\end{align}

\smallskip

Now, assume that $Q$ is of Kac-Moody type
and that $Q_1=\bar Q_1^<$ for some total ordering of the vertices of $Q$. 
Since the quiver $Q$ has no edge loops,
for each vertex $i$ and each positive integer $r$, the graded dg-algebra $\bfA_{r\alpha_i}$
is isomorphic to the exterior algebra $\SS(\frakg_{r\alpha_i}^*[1]\langle 2\rangle)$ with zero differential. 
For each dimension vector $\beta$, let $\P_\beta$ be the set of dominant weights of $G_\beta$ and
$V(\lambda)$ the irreducible representation of highest weight $\beta$.
Let $\beta=r\alpha_i$.
We view $V(\lambda)$ as a dg-module over the crossed product $\bfA_{r\alpha_i}\rtimes G_{r\alpha_i,c}$
such that $\bfA_{r\alpha_i}\rtimes\bbC^\times$ acts trivially.
Let $\calO(\lambda)_{r\alpha_i}$ be the corresponding coherent sheaf over the dg-stack $T^*\calX_{r\alpha_i}$.
Its class in the Grothendieck group is an element
\begin{align}\label{O}[\calO(\lambda)_{r\alpha_i}]\in\NKHA(\Pi_Q)_{r\alpha_i}.\end{align}
Let $\omega_1,\dots,\omega_r$ be the fundamental weights in $\P_{r\alpha_i}$.
Our first result is the following.

\smallskip

\begin{theorem}\label{thm:A}
Let $Q$ be a Kac-Moody quiver, with
a normal weight function on $\bar Q$.
\begin{itemize}[leftmargin=8mm]
\item[$\mathrm{(a)}$] 
There is a surjective $\bbN^{Q_0}$-graded $\R$-algebra homomorphism 
$\phi:\bfU^+\to\NKHA(\Pi_Q)$ such that
$\phi((\e_{i,n})^{(r)})=[\calO(n\omega_r)_{r\alpha_i}]$
for all $i\in Q_0,$ $ n\in\bbZ$ and $ r\in\bbZ_{>0}.$
\item[$\mathrm{(b)}$] 
If $\bfQ$ is of finite or affine type but not of type $A_1^{(1)}$, then the map $\phi$ is injective.
\end{itemize}
\end{theorem}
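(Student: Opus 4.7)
The plan is to route $\phi$ through the shuffle algebra by constructing an $\R$-algebra map $\rho:\NKHA(\Pi_Q)\to\SH_\K^\bullet$ such that $\rho([\calO(n\omega_r)_{r\alpha_i}])=(x_{i,1}\cdots x_{i,r})^n=\psi((\e_{i,n})^{(r)})$. Since $\rho$ will be injective, the Drinfeld and Serre relations transferred from Proposition \ref{prop:Shuffle} via $\psi$ will hold automatically among the classes $[\calO(n\omega_r)_{r\alpha_i}]$ in $\NKHA(\Pi_Q)$, producing the homomorphism $\phi$. The resulting identity $\rho\circ\phi=\psi$ will reduce injectivity in part (b) to injectivity of $\psi$ on $\bfU^+$.

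\emph{Construction of $\rho$.} I would enlarge the $\bbC^\times$-equivariance to the big torus $\T$ with a weight function satisfying \eqref{homogeneous}, producing the deformed algebra $\NKHA(\Pi_Q)_\T$. The crucial input is Proposition \ref{prop:TFT}, which asserts torsion-freeness of $\NKHA(\Pi_Q)_\T$ over the symmetric algebra generated by the Chern classes of the tautological bundles. Granting this, $\NKHA(\Pi_Q)_\T$ embeds into its localization, and $\T$-equivariant localization at the torus-fixed points of $\calM$ identifies that localization with the direct sum of the summands $\K_{G_{\beta,c}}(\K_\T)$. Specializing the $\T$-parameters yields $\rho$. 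A direct Euler-class computation then shows that the twist $\Omega_U$ from Section \ref{sec:twist} is chosen precisely so that the convolution $\circ$ of \eqref{indcoh} matches the shuffle product with kernel $\zeta^\bullet_{\alpha,\gamma}$; the normality hypothesis \eqref{normal} on the weight function is what aligns the exponents of the numerators and denominators.

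\emph{Computation on generators, construction of $\phi$, and surjectivity.} Because $Q$ is of Kac-Moody type there are no edge loops, so $\bar X_{r\alpha_i}$ is the zero space, the moment map vanishes identically, and the truncation of $T^*\calX_{r\alpha_i}$ is the classifying stack $BG_{r\alpha_i,c}$. By \eqref{truncation} we get $\NKHA(\Pi_Q)_{r\alpha_i}=\R_{G_{r\alpha_i,c}}$ and $[\calO(n\omega_r)_{r\alpha_i}]$ is the character of $V(n\omega_r)=\det^n$, namely $(x_{i,1}\cdots x_{i,r})^n$. Combined with the identification of the shuffle product and the injectivity of $\rho$, the classes $[\calO(n\omega_r)_{r\alpha_i}]$ inherit the Drinfeld-Serre relations, yielding a morphism $\widetilde\bfU^+\to\NKHA(\Pi_Q)$; this factors through $\bfU^+$ because $\NKHA(\Pi_Q)$ is separated at $q=1$, as the $(q-1)$-adic associated graded identifies with the topological K-theory of the Noetherian stack $\Lambda$. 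Surjectivity is proved by induction on $\beta$: the monoidal functor $\circ$ of \eqref{indcoh} shows that any coherent sheaf on $\Lambda_\beta$ is generated by classes induced from Levi subgroups, and the Levi case reduces to a product of single-vertex cases.

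\emph{Injectivity in part (b) and main obstacle.} The commutative triangle $\rho\circ\phi=\psi$ together with injectivity of $\rho$ reduces injectivity of $\phi$ on $\bfU^+$ to injectivity of $\psi$ on $\bfU^+$. For $\bfQ$ of finite or affine type distinct from $A_1^{(1)}$ this is the theorem of Eswara Rao \cite{E03}. The main obstacle in the whole argument is Proposition \ref{prop:TFT}: torsion-freeness of the deformed K-theoretic Hall algebra is the K-theoretic analogue of the characteristic-cycle analysis developed in Borel-Moore homology in \cite{SV18}, and its proof relies in an essential way on Isik's K-theoretic dimensional reduction \eqref{Kdimred0} to transfer the problem to a quiver-with-potential model where equivariant localization methods become tractable.
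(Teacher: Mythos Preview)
Your approach to part (a) is essentially the paper's: route through the shuffle algebra via an injective $\rho$, identify the twisted convolution with the shuffle kernel $\zeta^\bullet$, and use Proposition~\ref{prop:generators} plus the single-vertex computation for surjectivity. One minor correction: separatedness at $1$ comes from freeness of $\NKHA(\Pi_Q)$ over $\R$ (Lemma~\ref{lem:red}(b), proved via Nakajima quiver varieties), not from a topological K-theory comparison.

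Your part (b), however, has a genuine gap. You reduce injectivity of $\phi$ to injectivity of $\psi:\bfU^+\to\SH_\K^\bullet$ and attribute the latter to \cite{E03}. That is not what Enriquez proves: \cite[prop.~1.5,~1.6]{E03} shows that the Drinfeld--Serre presentation \emph{specialized at $q=1$} yields exactly the enveloping algebra $U(L\frakn_Q)$ (for $Q\neq A_1^{(1)}$); it says nothing about faithfulness of the shuffle realization over $\R$ or $\K$. Indeed, the paper treats factorization of $\psi$ through $\bfU^+$ as a \emph{consequence} of Theorem~\ref{thm:A}, not an input. The paper's actual argument for (b) is a dimension count at $q=1$: one completes $(q-1)$-adically, reduces to injectivity of $\phi_1:\bfU^+_1\to\NKHA(\Pi_Q)_1$, uses \cite{E03} only to identify $\bfU^+_1$ with $U(L\frakn_Q)$, then compares graded dimensions by matching the Kac-polynomial formula for $\dim\gr_\alpha(\bfU^+_1)_\beta$ against $\dim H_{2\dim\Lambda_{\beta,1}-2\alpha}(\Lambda_{\beta,1})$ from \cite{SV18}, transported to K-theory via the Edidin--Graham Riemann--Roch isomorphism. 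Since $\phi_1$ is already surjective by part (a), the dimension inequality forces it to be an isomorphism; torsion-freeness of $\NKHA(\Pi_Q)$ over $\R$ then lifts this to injectivity of $\phi$.
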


\smallskip

The proof uses some generalizations of the K-theoretic Hall algebras.
The first one is the deformed K-theoretic Hall algebra 
given by considering more general torus actions instead of the 
$\bbC^\times$-action defined in \S\ref{sec:TQ1}.
The other one is the deformed K-theoretic Hall algebra associated with quivers with potential.

\smallskip

The theorem does not imply that the quantum group $\widetilde\bfU^+$ is free over $\R$, since we used the
quotient $\bfU^+$.
This is known to be true if the quiver $Q$ is of finite type, by the PBW theorem for $\widetilde\bfU^+$.
In this case we have indeed $\widetilde\bfU^+=\bfU^+$.

\smallskip

\smallskip

\subsubsection{Deformed K-theoretic Hall algebras}\label{sec:deformed}
Let $Q$ be any quiver.
Consider a torus $\T$ with a weight function $Q_1\to\X^*(\T)$ such that $h\mapsto q_h$.
The torus $\T$ acts on the representation space $X$ of $Q$ as  in \eqref{action} with $u\in\T$, i.e., we have
\begin{align*}u\cdot x_h=q_h(u)x_h
\quad,\quad u\in\T, x\in X_\beta, h\in Q_1.\end{align*}
This yields a $G_{\beta,\T}$-action on $X_\beta$ for each dimension vector $\beta$.
Now, consider the double quiver $\bar Q$ and choose an homogeneous weight function $h\mapsto q_h$
on $\bar Q$, see \S\ref{sec:TQ1}.
We get a $G_{\beta,\T}$-action on the space $\bar X_\beta$.
The groups $G_\beta$ and $\T$ act on 
$\frakg_\beta^*$ by conjugation and by the character  $q^2$ respectively.
The moment map $\mu_\beta:\bar X_\beta\to\frakg_\beta^*$ is  $G_{\beta,\T}$-invariant. 
Replacing everywhere the $\bbC^\times$-action by the $\T$-action
we define as above the stacks 
$$\calX_\T\quad,\quad\bar\calX_\T\quad,\quad\calM_\T\quad,\quad\Lambda_\T\quad,\quad\text{etc}$$
Then, we define the $\R_\T$-algebras $\NKHA(\Pi_Q)_\T$ and $\KHA(\Pi_Q)_\T$
as in \S\ref{sec:KHA}. We call them the \emph{deformed nilpotent K-theoretic Hall algebra} 
and the \emph{deformed K-theoretic Hall algebra} of $\Pi_Q$.

\smallskip

\begin{remark}
If $\T=\{1\}$ we write 
$$\calX_1=\calX_\T
\quad,\quad
\Lambda_1=\Lambda_\T
\quad,\quad
\NKHA(\Pi_Q)_1=\NKHA(\Pi_Q)_\T
\quad,\quad 
etc.$$
We'll often assume that the torus  $\T$ is non trivial.
If  $\T=\bbC^\times\times\A$ with $\A$ a subtorus, as in \S\ref{sec:TQ1}, then 
the weight function is a map $h\mapsto q_h,t_h$ and the $\T$-action on $X$ writes
\begin{align}\label{actionT}(u,v)\cdot x_h=q_h(u)t_h(v)x_h
\quad,\quad u\in\bbC^\times, v\in \A, x\in X, h\in Q_1.\end{align}
\end{remark}

\smallskip

\subsubsection{Nilpotent K-theoretic Hall algebra of $\Pi_Q$}\label{sec:generators}
Let $Q$ be any quiver.
Consider a torus $\T$ and an homogeneous weight function $\bar Q_1\to\X^*(\T)$
such that $h\mapsto q_h$ yielding a $G_{\beta,\T}$-action 
on $\bar X_\beta$ as in \S\ref{sec:deformed}.
The  nilpotent version $\NKHA(\Pi_Q)_\T$ is introduced for the following reasons :
\begin{itemize}[leftmargin=8mm]
\item[$\mathrm{(a)}$] 
$\NKHA(\Pi_Q)_\T$ admits some generators of dimension in $\bigsqcup_i\bbN\alpha_i$ by Proposition \ref{prop:generators},
\item[$\mathrm{(b)}$] 
$\NKHA(\Pi_Q)_{\beta,\T}$ is a torsion free $\R_{G_{\beta,\T}}$-module for each dimension $\beta$ by Proposition \ref{prop:TFT}, if the torus $\T$ is large enough,
\item[$\mathrm{(c)}$] 
$\NKHA(\Pi_Q)_\T$ is a free $\R_\T$-module  by Lemma \ref{lem:red},
\item[$\mathrm{(d)}$] 
$\NKHA(\Pi_Q)_\T\otimes_{\R_\T}\K_\T=\KHA(\Pi_Q)_\T\otimes_{\R_\T}\K_\T$
by  Lemma \ref{lem:red} if $\T=\bbC^\times\times\A$.
\end{itemize}
The goal of this section is to prove Claim (a).
We choose any torus $\T$, possibly trivial.
We abbreviate
$$\NKHA(\Pi_Q)_{\bullet\alpha_i,\T}=\bigoplus_{r\geqslant 0}\NKHA(\Pi_Q)_{r\alpha_i,\T}
\quad,\quad
\NKHA(\Pi_Q)_{\delta,\T}=\bigsqcup_{i\in Q_0}\NKHA(\Pi_Q)_{\bullet\alpha_i,\T}.$$
The direct image yields a map
\begin{align*}\NKHA(\Pi_Q)_{\delta,\T}\to\NKHA(\Pi_Q)_\T.\end{align*}

\begin{proposition}\label{prop:generators}
The $\R_\T$-algebra $\NKHA(\Pi_Q)_\T$ is generated by the subset
$\NKHA(\Pi_Q)_{\delta,\T}.$
\end{proposition}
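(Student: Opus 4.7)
The plan is to proceed by induction on $|\beta| = \sum_{i \in Q_0} b_i$, the case $|\beta| = 0$ being trivial. The key geometric input is that every strongly seminilpotent representation of positive dimension admits a non-zero subrepresentation of dimension $\alpha_i$ for some $i \in Q_0$: if $F_\bullet$ is a flag witnessing $x \in N_\beta$ in the sense of \S\ref{sec:TQ}, then $F_1$ itself is such a subrepresentation. For each $i \in Q_0$, let $Z^i_\beta \subset N_\beta$ denote the closed $G_{\beta,\T}$-invariant subset of representations admitting a subrepresentation of dimension $\alpha_i$, and set $\calZ^i_\beta = [Z^i_\beta / G_{\beta,\T}]$. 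The preceding observation yields the set-theoretic cover $N_\beta = \bigcup_{i \in Q_0} Z^i_\beta$.

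For each $i$, I would consider the parabolic induction diagram of \S\ref{sec:DHAPQ} attached to the decomposition $\beta = (\beta - \alpha_i) + \alpha_i$, taking $F \subset V$ of dimension $\alpha_i$. The product $\circ$ in $\NKHA(\Pi_Q)_\T$ factors through proper pushforward along the forgetful map from the parabolic induction stack onto $\calZ^i_\beta$. Since this forgetful map is proper and surjective, its pushforward is surjective on rational Grothendieck groups; consequently, the image of $\NKHA(\Pi_Q)_{(\beta-\alpha_i),\T} \circ \NKHA(\Pi_Q)_{\alpha_i,\T}$ in $\NKHA(\Pi_Q)_{\beta,\T}$ coincides with the image of the closed immersion induced map $G_0(\calZ^i_\beta)_\T \to G_0(\Lambda_\beta)_\T$. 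The induction hypothesis applied to the factor $\NKHA(\Pi_Q)_{(\beta-\alpha_i),\T}$ then places this image inside the subalgebra generated by $\NKHA(\Pi_Q)_{\delta,\T}$.

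It then suffices to show that the sum $\sum_{i \in Q_0} \Im\bigl(G_0(\calZ^i_\beta)_\T \to G_0(\Lambda_\beta)_\T\bigr)$ exhausts $G_0(\Lambda_\beta)_\T$. I would establish this by a standard Noetherian induction on the support of coherent sheaves: given $[\calF] \in G_0(\Lambda_\beta)$ whose support has top-dimensional irreducible components $T_1, \dots, T_k$, each $T_s$ is irreducible and contained in the finite union $\bigcup_i \calZ^i_\beta$, hence lies in some $\calZ^{i_s}_\beta$. The localization sequence $G_0(T_1) \to G_0(\Lambda_\beta) \to G_0(\Lambda_\beta \setminus T_1) \to 0$ writes $[\calF]$ as the sum of a class supported on $T_1$, which lies in $\Im(G_0(\calZ^{i_1}_\beta) \to G_0(\Lambda_\beta))$, and a class whose support has one fewer top-dimensional component; the induction concludes.

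The main technical obstacle is the rational surjectivity of the pushforward along the parabolic forgetful map onto $G_0(\calZ^i_\beta)_\T$; it reduces to the fact that, in characteristic zero, proper surjective maps between quotient stacks of finite type induce surjections on rational equivariant $G$-theory, a statement that may be handled by a d\'evissage to the smooth case combined with equivariant Riemann--Roch.
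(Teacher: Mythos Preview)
Your argument has a genuine gap at the step where you conclude that the image of $\NKHA(\Pi_Q)_{(\beta-\alpha_i),\T}\circ\NKHA(\Pi_Q)_{\alpha_i,\T}$ coincides with the image of $\G_0(\calZ^i_\beta)\to\G_0(\Lambda_{\beta,\T})$. The multiplication $\circ$ is the composite $Rp_\circ\, Lq^*$, so its image is $p_*\bigl(q^*\G_0(T^*\calX_{L,\T})_\Lambda\bigr)$, not $p_*\bigl(\G_0(T^*\calX_{P,\T})_\Lambda\bigr)$. Surjectivity of the proper pushforward $p_*$ onto $\G_0(\calZ^i_\beta)$ would give only the latter. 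To get the former you would also need $Lq^*$ to be surjective on Grothendieck groups, and this fails: on truncations, $q$ sends a short exact sequence of $\Pi_Q$-modules to its associated graded, and the fibres are spaces of extensions $\Ext^1_{\Pi_Q}(x_{V/F},x_F)$, whose dimension jumps. So $q$ is not a vector bundle stack globally and $Lq^*$ has no reason to be surjective. Your proposed fix via equivariant Riemann--Roch addresses only the $p_*$-half of the problem.

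The paper's proof circumvents exactly this difficulty by refining the stratification. Instead of the closed sets $\calZ^i_\beta$, it filters by the function $\varepsilon_i(x)$ (the dimension of the maximal $\bullet\alpha_i$-socle), and for each level $l$ it takes $\gamma=l\alpha_i$ rather than $\alpha_i$. The point of choosing the \emph{maximal} such subobject is twofold: on the locally closed stratum $\{\varepsilon_i=l\}$ the forgetful map $p'$ becomes an \emph{isomorphism} (the flag is unique), and the map $q'$ to the Levi becomes a genuine vector bundle stack \cite[lem.~5.7]{SV18}, so that $L(q')^*$ is an isomorphism on $\G_0$. Both invertibilities are used, and the argument closes by descending induction on $l$ together with the localization sequence. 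The Mayer--Vietoris step in the paper plays the same role as your covering argument, but the subsequent induction is over the value of $\varepsilon_i$ rather than over the support of a sheaf.
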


\begin{proof} 
For each vertex $i\in Q_0$ and each representation $x\in\bar X_\beta$, we set
$x_i^{out}=\sum_hx_h,$
where the sum runs over the set of all arrows $h\in\bar Q_1$ with source $i$ and target $\neq i$.
Let $\varepsilon_i(x)$ be the dimension of the maximal subspace of $\dim\Ker(x_i^{out})$ preserved by the map
$x_h$ for all loop $h:i\to i$ in $\bar Q_1$.
We abbreviate
$$\bar X^{\geqslant l\alpha_i}_\beta=\{\,x\in\bar X_\beta\,;\,\varepsilon_i(x)\geqslant l\,\}
\quad,\quad l\in\bbN.$$
Consider the substacks of $\bar\calX_\beta$ given by
$$\bar\calX_{\beta,\T}^{\geqslant l\alpha_i}=[\,\bar X_\beta^{\geqslant l\alpha_i}\,/\,G_{\beta,\T}\,]
\quad,\quad
\bar\calX_{\beta,\T}^{<l\alpha_i}=\bar\calX_{\beta,\T}\,\setminus\,\bar\calX_{\beta,\T}^{\geqslant l\alpha_i}
\quad,\quad
\bar\calX_{\beta,\T}^{l\alpha_i}=\bar\calX_{\beta,\T}^{\geqslant l\alpha_i}\,\setminus\,\bar\calX_{\beta,\T}^{> l\alpha_i}.$$
For  $Z=\Lambda$ or $T^*\calX_{\beta,\T}$  we define similarly 
$Z^{\geqslant l\alpha_i}$, 
$Z^{<l\alpha_i}$ and 
$Z^{l\alpha_i}$.
The stack $\Lambda^{\geqslant l\alpha_i}$ is closed in $\Lambda$.
The dg-stack $T^*\calX_{\beta,\T}^{< l\alpha_i}$ is almost smooth, being open in the almost smooth dg-stack
$T^*\calX_{\beta,\T}$.
For each flag $F=(F_0\subset F_1\subset\cdots\subset F_r)$ as in \eqref{flag}
and each element $x\in\bar X(F)$,
we have $x_i^{out}(F_1)=0$ for any vertex $i$.
Hence the stack
 $\Lambda_\beta$ is contained in the union of all closed substacks $\bar\calX_\beta^{\geqslant \alpha_i}$
as $i$ runs over the set $Q_0$.
Let $\G_0(T^*\calX_{\beta,\T})^{\geqslant l\alpha_i}_\Lambda$ be the image of the obvious map
$$\G_0(T^*\calX_{\beta,\T})_{\Lambda^{\geqslant l\alpha_i}}\to\G_0(T^*\calX_{\beta,\T})_\Lambda.$$
Thus the Mayer-Vietoris exact sequence implies that, see \S\ref{sec:MayerVietoris},
\begin{align}\label{MV}
\G_0(T^*\calX_{\beta,\T})_\Lambda=\sum_{i\in Q_0}\G_0(T^*\calX_{\beta,\T})_\Lambda^{\geqslant \alpha_i}.
\end{align}
Let $A$ be the subalgebra of $\NKHA(\Pi_Q)_\T$ generated by the subset
$\NKHA(\Pi_Q)_{\delta,\T}$. 
Set
$$A_\beta=A\cap\NKHA(\Pi_Q)_{\beta,\T}.$$
Let $b_i$ be the $i$th coordinate of the dimension vector $\beta$.
If $l>b_i$ then we have
$$\G_0(T^*\calX_{\beta,\T})^{\geqslant l\alpha_i}_\Lambda=\{0\}.$$
Assume that for some positive integer $l$ we have
\begin{align}\label{HYP}\G_0(T^*\calX_{\beta,\T})^{>l\alpha_i}_\Lambda\subset A_\beta.\end{align}
By descending induction and \eqref{MV} we'll prove that
\begin{align}\label{EQ1}\G_0(T^*\calX_{\beta,\T})^{\geqslant l\alpha_i}_\Lambda\subset A_\beta.\end{align}

\smallskip

To do so, set $\gamma=l\alpha_i$.
Given a flag of $Q_0$-graded vector spaces $W\subset V$ of dimension $(\gamma,\beta)$, we define
the groups $G$, $P$, $L$, the stacks $\calX_{G,\T}$, $\calX_{P,\T}$, $\calX_{L,\T}$ and the
morphisms $q$, $p$ as in \S\ref{sec:DHAPQ}.
We abbreviate 
$$
\bar\calX_{L,\T}^{0\alpha_i}=\bar\calX_{\alpha,\T}^{0\alpha_i}\times \bar\calX_{\gamma,\T}
\quad,\quad
\bar\calX_{P,\T}^{l\alpha_i}=(p_\flat)^{-1}(\bar\calX_{G,\T}^{l\alpha_i}).$$
If $x\in X_P$ then $W\subset\Ker(x_i^{out})$, hence 
$p_\flat(\bar\calX_{P,\T})\subset\bar\calX_{G,\T}^{\geqslant l\alpha_i}$ and
the stack $\bar\calX_{P,\T}^{l\alpha_i}$ is open in $\bar\calX_{P,\T}$.
The map $p_\flat$ restricts to an isomorphism 
$p'_\flat:\bar\calX_{P,\T}^{l\alpha_i}\to\bar\calX_{G,\T}^{l\alpha_i}$.
We get the following commutative diagram of stacks
\begin{equation*}
\xymatrix{
\bar\calX_{L,\T}&\,\bar\calX_{L,\T}^{0\alpha_i}\ar[l]_-{j_1}\\
\bar\calX_{P,\T}\ar[u]^-{q_\flat}\ar[d]\ar[d]_{p_\flat}
&\,\bar\calX_{P,\T}^{l\alpha_i}\ar[l]_-{j_2}\ar[u]_-{q'_\flat}\ar[d]^-{p'_\flat} \\
\bar\calX_{G,\T}&\,\bar\calX_{G,\T}^{\leqslant l\alpha_i}\ar[l]_-{j_3}.}
\end{equation*}
The $j$'s are the obvious open embeddings.
The maps in the diagram preserve the subsets of nilpotent representations.
Taking the derived functors and the Grothendieck groups,
we get the following diagram
\begin{equation*}
\begin{split}
\xymatrix{
\G_0(T^*\calX_{L,\T})_\Lambda\ar@{->>}[r]^-{L(j_1)^*}\ar[d]_-{Lq^*}&
\G_0(T^*\calX_{L,\T}^{0\alpha_i})_{\Lambda^{0\alpha_i}}\ar[d]^-{L(q')^*}_-\wr\\
\G_0(T^*\calX_{P,\T})_\Lambda\ar@{->>}[r]^-{L(j_2)^*}\ar[d]_-{Rp_*}&
\G_0(T^*\calX_{P,\T}^{l\alpha_i})_{\Lambda^{l\alpha_i}}\ar[d]^-{R(p')_*}_-\wr\\
\G_0(T^*\calX_{G,\T})_{\Lambda^{\geqslant l\alpha_i}}\ar@{->>}[r]^-{L(j_3)^*}&
\G_0(T^*\calX_{G,\T}^{\leqslant l\alpha_i})_{\Lambda^{l\alpha_i}}.}
\end{split}
\end{equation*}
The lower square is commutative by base change.
The upper square is obviously commutative.
The map $R(p')_*$ is invertible. 
The map $L(q')^*$ is invertible because $(q')_\flat$ is a vector bundle stack 
$\Lambda_{P,\T}^{l\alpha_i}\to\Lambda_{L,\T}^{0\alpha_i}$ by \cite[lem~5.7]{SV18}. 
We deduce that for each element $x$ in $\G_0(T^*\calX_{G,\T})_{\Lambda^{\geqslant l\alpha_i}}$ there is some 
$y$ in $\G_0(T^*\calX_{L,\T})_\Lambda$ such that 
$L(j_3)^*(x)=L(j_3)^*Rp_*Lq^*(y)$.
Hence, we have
$$L(j_3)^*(\G_0(T^*\calX_{\beta,\T})^{\geqslant l\alpha_i}_\Lambda)\subseteq 
L(j_3)^*(\G_0(T^*\calX_{\alpha,\T})_\Lambda\circ\G_0(T^*\calX_{\gamma,\T})_\Lambda).$$
Since $l>0$, by increasing induction on $\beta$ we can assume that
$\G_0(T^*\calX_{\alpha,\T})_\Lambda\subset A_\alpha,$
hence 
$$L(j_3)^*(\G_0(T^*\calX_{\beta,\T})^{\geqslant l\alpha_i}_\Lambda)\subseteq 
L(j_3)^*(A_\beta).$$
The localization exact sequence yields the following exact sequence
\begin{align*}
\xymatrix{
0\ar[r]&\G_0(T^*\calX_{\beta,\T})^{> l\alpha_i}_\Lambda\ar[r]&
\G_0(T^*\calX_{\beta,\T})^{\geqslant l\alpha_i}_\Lambda\ar[r]^-{L(j_3)^*}&
\G_0(T^*\calX_{\beta,\T}^{\leqslant l\alpha_i})_{\Lambda^{\leqslant l\alpha_i}}.
}
\end{align*}
Thus the inclusion \eqref{EQ1} follows from \eqref{HYP}.
\end{proof}

\smallskip

\subsubsection{Deformed K-theoretic Hall algebra of a quiver without potential}\label{sec:DHA}
Let $Q$ be any quiver.
Fix a torus $\T$ and a weight function $Q_1\to\X^*(\T)$ such that $h\mapsto q_h$.
We define
$$\KHA(Q)_\T=\G_0(\calX_\T)\quad,\quad
\KHA(Q)_{\beta,\T}=\G_0(\calX_{\beta,\T}).$$ 
Since $X_\beta$ is an affine space, we have obvious $\R_\T$-module isomorphisms
$$\G_0(\calX_{\beta,\T})=\R_{G_{\beta,\T}}\otimes[\calO_{\calX_{\beta,\T}}]=
\R_{G_{\beta,\T}}.$$
Let $\SH_\T=\bigoplus_\beta\R_{G_{\beta,\T}}$.
We deduce that
\begin{align}\label{DKHAWP}
\KHA(Q)_\T=\SH_T
\end{align}
Let the groups $G$, $P$, $L$, $U$ be as in \S\ref{sec:DHAPQ}. 
There is an obvious proper map
$p:\calX_{P,\T}\to \calX_{G,\T}$ and almost smooth map
$q:\calX_{P,\T}\to \calX_{L,\T}$.
They give rise to the triangulated functors
$$Rp_*:D^\b(\calX_{P,\T})\to D^\b(\calX_{G,\T})
\quad,\quad
Lq^*:D^\b(\calX_{L,\T})\to D^\b(\calX_{P,\T}).$$
We define the functor
\begin{align}\label{*}*=Rp_*Lq^*:D^\b(\calX_{L,\T})\to D^\b(\calX_{G,\T}).\end{align}
A direct computation yields the following.

\smallskip

\begin{lemma}\label{lemma:shuffle}
Under the isomorphism \eqref{DKHAWP}
the multiplication $*$ on $\KHA(Q)_\T$
is identified with the shuffle product on $\SH_\T$ with kernel 
\begin{align}\label{kernel0}
\zeta_{\alpha,\gamma}^*=\prod_{h\in Q_1}
\prod_{\substack{b_i\geqslant r>a_i\\0<s\leqslant a_j}}
(1-x_{i,r}/q_hx_{j,s})
\cdot\prod_{i\in Q_0}\prod_{\substack{0<s\leqslant a_i\\b_i\geqslant r>a_i}}(1-x_{i,r}/x_{i,s})^{-1}.
\end{align}
\qed
\end{lemma}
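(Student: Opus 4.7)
\smallskip

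The proof is a direct computation in equivariant K-theory, exploiting that all the schemes involved are affine spaces on which the groups act linearly. The plan is to identify all three Grothendieck groups with representation rings, and then compute $Lq^*$ and $Rp_*$ piece by piece, reading off the two factors of $\zeta^*_{\alpha,\gamma}$ from a Koszul resolution and a Weyl-type pushforward formula respectively.

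The first step is the identification. Since $X_G,\ X_L,\ X_P$ are each $G_\T$-, $L_\T$-, $P_\T$-representations (hence affine spaces), homotopy invariance of $\G_0$ gives $\G_0(\calX_{G,\T})=\R_{G_{\beta,\T}}$, $\G_0(\calX_{L,\T})=\R_{L_\T}$, and $\G_0(\calX_{P,\T})=\R_{L_\T}$, the last using that the unipotent radical $U$ of $P$ induces an equivalence on K-theory under $BP\to BL$. Under these identifications, the class of the structure sheaf corresponds to $1$, so the map $Lq^*$ reduces to the identity $\R_{G_{\alpha,\T}}\otimes\R_{G_{\gamma,\T}}\to\R_{L_\T}$, i.e.\ $f\otimes g\mapsto f\otimes g$, because $q$ is an affine bundle (the composition of the projection $X_P\to X_L$ with $BP$-twisting into $BL$).

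For $Rp_*$, I would factor $p$ as
\begin{align*}
\calX_{P,\T}=[X_P/P_\T]\xrightarrow{\ i\ }[X_G/P_\T]\xrightarrow{\ \pi\ }[X_G/G_\T]=\calX_{G,\T}.
\end{align*}
Here $i$ is the closed immersion induced by the $P$-stable inclusion $X_P\subset X_G$. Writing $V_i$ at each vertex as quotient$\oplus$subspace, the normal bundle is block $(2,1)$-type entries: $N=\bigoplus_{h\colon i\to j}\Hom(F_i,V_j/F_j)\otimes q_h$, whose weights are the $q_h x_{j,s}/x_{i,r}$ with $r\in[a_i+1,b_i]$ and $s\in[1,a_j]$. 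The Koszul resolution of $\calO_{X_P}$ in $X_G$ gives
\begin{align*}
Ri_*[\calO_{X_P}]=\lambda_{-1}(N^\vee)=\prod_{h\in Q_1}\prod_{\substack{b_i\geqslant r>a_i\\ 0<s\leqslant a_j}}\bigl(1-x_{i,r}/q_h x_{j,s}\bigr),
\end{align*}
which is exactly the first factor of $\zeta^*_{\alpha,\gamma}$. The map $\pi$ is a $G/P$-bundle whose fiber at each vertex $i$ is the partial flag variety $GL_{b_i}/(GL_{a_i}\times GL_{c_i}\ltimes U_i)$. By the standard equivariant K-theoretic pushforward formula for partial flag bundles (Weyl/Demazure integration),
\begin{align*}
R\pi_*(h)=\sum_{w\in W_{\alpha,\gamma}}w\!\left(\frac{h}{\prod_{i\in Q_0}\prod_{0<s\leqslant a_i,\ b_i\geqslant r>a_i}\bigl(1-x_{i,s}/x_{i,r}\bigr)}\right),
\end{align*}
producing the shuffle sum over $W_{\alpha,\gamma}$ and the second factor of $\zeta^*_{\alpha,\gamma}$.

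Composing the two pushforwards with $Lq^*(f\otimes g)=f\otimes g$ yields the asserted shuffle formula. The only real obstacle is bookkeeping: matching the orientation conventions of the flag $(F\subset V)$ with the indexing convention $r\leqslant a_i$ vs.\ $r>a_i$ in \eqref{R}, and tracking the direction of the Euler factors $(1-x/y)$ vs.\ $(1-y/x)$. All of this can be verified on the toy case $\beta=\alpha_i+\alpha_j$ (single arrow) and the toy case $\beta=2\alpha_i$ (no arrow, pure flag pushforward), which together determine the two factors of $\zeta^*$, and then the general case follows by additivity of the normal bundle and multiplicativity of the partial-flag denominator over vertices.
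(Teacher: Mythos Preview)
Your approach is exactly the ``direct computation'' the paper alludes to but does not spell out: reduce everything to representation rings by homotopy invariance, factor $p$ as the $P_\T$-equivariant closed immersion $i\colon X_P\hookrightarrow X_G$ followed by the $G/P$-fibration $\pi\colon[X_G/P_\T]\to[X_G/G_\T]$, and then read off the two factors of the kernel from the Koszul resolution and the equivariant localization formula respectively. This is the standard and correct route.

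One concrete point that falls under the bookkeeping you already flagged: your Weyl denominator has the orientation reversed. With the paper's convention $\dim F=\gamma$ (so $F_i$ carries the variables $x_{i,r}$ with $r>a_i$ and $(V/F)_i$ carries $x_{i,s}$ with $s\leqslant a_i$), one has $T_{eP}(G/P)\cong\frakg/\frakp\cong\fraku^-$ with weights $x_{i,s}/x_{i,r}$, hence the localization denominator is
\[
\lambda_{-1}\bigl(T^*_{eP}(G/P)\bigr)=\lambda_{-1}(\fraku)=\prod_{i\in Q_0}\prod_{\substack{0<s\leqslant a_i\\ b_i\geqslant r>a_i}}\bigl(1-x_{i,r}/x_{i,s}\bigr),
\]
not $\prod(1-x_{i,s}/x_{i,r})$ as you wrote. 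Your proposed toy check $\beta=2\alpha_i$ does detect this: with $f=x_1$, $g=1$ the correct denominator gives $x_1+x_2$ (the class of the standard $GL_2$-representation, as it must since this is $R\Ind_B^{GL_2}$ of the relevant character), whereas your version gives $0$. Once this sign is corrected, the composition $R\pi_*\circ Ri_*\circ Lq^*$ produces $\zeta^*_{\alpha,\gamma}$ on the nose.
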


We'll use a twisted version $\circ$ of the functor $*$ defined as follows, see \S\ref{sec:twist},
\begin{align}\label{tp}
\circ=Rp_\circ Lq^*
\quad,\quad
Rp_\circ=Rp_*(\,\text{-}\otimes \Omega).\end{align}
The twist $\Omega$ is an object in $D^\b(\calX_{P,\T})$ which is analogous to \eqref{Omega}.
We will define it later.
The functors $*$ and $\circ$ yield $\R_\T$-algebra structures on $\KHA(Q)_\T$ called the
\emph{K-theoretic Hall algebra of the quiver $Q$ without potential} (and its twisted version).

\smallskip

\subsubsection{Deformed K-theoretic Hall algebra of a quiver with  potential}\label{sec:DHAW}
Let $Q$ be any quiver.
Let $W$ be a \emph{potential} on $Q$, i.e., an element of the quotient $\bbC Q\,/\,[\bbC Q\,,\,\bbC Q]$. We define
the K-theoretic Hall algebra $\KHA(Q,W)_\T$ of the pair $(Q,W)$ as follows, see \cite{P19}.
For each dimension vector $\beta$, the trace of $W$ yields a map 
$w_\beta:X_\beta\to\bbC.$ 
We'll assume that this function is $G_{\beta,\T}$-invariant.
We define
$$\calW_{\beta,\T}=[\,W_\beta\,/\,G_{\beta,\T}\,]
\quad,\quad
W_\beta=\{0\}\times^R_{\bbC} X_\beta$$
The dg-stack $\calW_{\beta,\T}$ is the quotient of the derived zero fiber $W_\beta$ of 
the map $w_\beta$ by the $G_{\beta,\T}$-action. 
Let $D^\sg(\calW_{\beta,\T})$ be the category of singularities of the dg-stack $\calW_{\beta,\T}$. 
Since $X_\beta$ is an affine space, if 
$w_\beta\neq 0$ then the map $w_\beta$ is flat. 
In this case $\calW_{\beta,\T}$ is just the quotient stack of the scheme theoretic zero 
locus of $w_\beta$, and the derived category of singularities considered here is the graded derived 
category of singularities considered by Orlov.
We define
$$\KHA(Q,W)_{\beta,\T}=\G_0^\sg(\calW_{\beta,\T}).$$ 
By \cite[\S 3.1]{P19}, for each dimension vectors
$\alpha,$ $\gamma$ with sum $\beta$ we define as in \eqref{*} a triangulated functor
$$*:D^\sg(\calW_{\alpha,\T})\times D^\sg(\calW_{\gamma,\T})\to D^\sg(\calW_{\beta,\T}).$$
We may also define a twisted version $\circ$ of the product $*$ as in \eqref{tp}.
Taking the sum over all dimension vectors we get a graded triangulated monoidal category structure $\circ$ on
$$D^\sg(\calW_\T)=\bigoplus_{\beta\in\bbN^{Q_0}}D^\sg(\calW_{\beta,\T}).$$
We call the graded triangulated monoidal category $(D^\sg(\calW_\T)\,,\,\circ)$ the
\emph{deformed derived Hall algebra} of the quiver with potential $(Q,W)$. 
This monoidal structure yields an $\R_\T$-linear multiplication $\circ$ on the Grothendieck group,
hence an $\R_\T$-algebra 
$$\KHA(Q,W)_\T=\bigoplus_\beta\KHA(Q,W)_{\beta,\T}$$ 
called the \emph{deformed K-theoretic Hall algebra} of the quiver with potential $(Q,W)$.
If $\T=\bbC^\times$ we abbreviate 
$$\KHA(Q)=\KHA(Q)_\T
\quad,\quad
\KHA(Q,W)=\KHA(Q,W)_\T.$$

We can now compare the deformed K-theoretic Hall algebra with zero potential and the
deformed K-theoretic Hall algebra without potential.
For each dimension vector $\beta$ the second projection $W_\beta\to X_\beta$ is an 
l.c.i.~closed immersion. 
It yields an l.c.i.~closed immersion of dg-stacks $i_W:\calW_\T\to\calX_\T$.
The pushforward by the map $i_W$ is well-defined, see, e.g., \cite{T12}. 

\smallskip

\begin{lemma}\label{lem:pad} 
\hfill
\begin{itemize}[leftmargin=8mm]
\item[$\mathrm{(a)}$] 
The direct image yields an $\R_\T$-algebra homomorphism
$(i_W)_*:\KHA(Q,W)_\T\to\KHA(Q)_\T.$
\item[$\mathrm{(b)}$] 
The direct image yields an $\R_\T$-algebra isomomorphism
$(i_0)_*:\KHA(Q,0)_\T\to\KHA(Q)_\T.$
\end{itemize}
\end{lemma}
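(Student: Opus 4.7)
My plan for (a) is to invoke base change for the compatible correspondence diagrams on the potential and non-potential sides; for (b) I exploit the fact that $W=0$ makes $\pi_0\calW_{\beta,\T}=\calX_{\beta,\T}$ and renders the structure dg-algebra of $\calW_{\beta,\T}$ numerically trivial.

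For (a), the key geometric input is the identity $w_G\circ p^X_\flat=w_P=w_L\circ q^X_\flat$ on $X_P$, which reflects the fact that the trace of any cycle on a block-upper-triangular representation equals the trace on its block-diagonal part. Consequently the two squares
\[
\begin{tikzcd}
\calW_{L,\T} \ar[d, hook, "i_W^L"'] & \calW_{P,\T} \ar[l, "q^W"'] \ar[r, "p^W"] \ar[d, hook, "i_W^P"'] & \calW_{G,\T} \ar[d, hook, "i_W^G"] \\
\calX_{L,\T} & \calX_{P,\T} \ar[l, "q^X"'] \ar[r, "p^X"] & \calX_{G,\T}
\end{tikzcd}
\]
are derived Cartesian. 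The left square combined with base change yields $L(q^X)^*(i_W^L)_*=(i_W^P)_*L(q^W)^*$, and the right square commutes strictly, so $R(p^X)_*(i_W^P)_*=(i_W^G)_*R(p^W)_*$. The twist $\Omega$ of \S\ref{sec:twist} is intrinsic to the flag data, hence pulls back under $(i_W^P)^*$, and the projection formula gives $(i_W^P)_*(-\otimes\Omega)=(i_W^P)_*(-)\otimes\Omega$. Stitching these identities together with the K\"unneth decomposition $(i_W^L)_*=(i_W^\alpha)_*\boxtimes(i_W^\gamma)_*$ proves $(i_W)_*(a\circ b)=(i_W)_*(a)\circ(i_W)_*(b)$.

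For (b), since $w_\beta=0$ the truncation $\pi_0\calW_{\beta,\T}$ coincides with $\calX_{\beta,\T}$, and the truncation isomorphism \eqref{truncation} identifies $\G_0(\calW_{\beta,\T})$ with $\G_0(\calX_{\beta,\T})$ via $[M]\mapsto\sum_i(-1)^i[H^i(M)]$. Under this identification $(i_0)_*$, which pushes a complex of $\calA_{\calW_\beta}$-modules to its underlying complex of $\calO_{\calX_{\beta,\T}}$-modules, reduces to the identity. It remains to check that $(i_0)_*$ annihilates perfect complexes. The $G_{\beta,\T}$-invariance of $w_\beta$ forces the torus to act trivially on the target $\bbC$, so the generator $\eta$ of the structure dg-algebra $\calA_{\calW_\beta}=\SS(\calO_{X_\beta}^*[1])$ has internal degree $0$, giving $[\calA_{\calW_\beta}]=[\calO_{X_\beta}]-[\calO_{X_\beta}]=0$ in $\G_0(\calW_{\beta,\T})$. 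By the resolution property, every perfect complex on $\calW_{\beta,\T}$ is generated by twists $\calA_{\calW_\beta}\otimes\calV$ by equivariant vector bundles $\calV$, all of which have vanishing class. Hence $\G_0^\sg(\calW_{\beta,\T})=\G_0(\calW_{\beta,\T})=\G_0(\calX_{\beta,\T})$, and $(i_0)_*$ realizes this isomorphism.

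The main obstacle is keeping careful track of the twist $\Omega$ in (a): conceptually a routine projection-formula calculation, it nonetheless requires attention to sign and grading conventions, especially as one passes to derived Cartesian squares of dg-stacks. In (b), the essential input is the invariance hypothesis that makes $\calA_{\calW_\beta}$ numerically trivial; without it the image of $\K_0\to\G_0$ would be non-zero and the claimed isomorphism would fail.
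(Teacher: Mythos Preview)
Your proof is correct and substantially more detailed than the paper's own argument, which for (a) simply cites Padurariu \cite[prop.~9.4]{P19}, \cite[prop.~3.6]{P21} and for (b) asserts without further justification that $(i_0)_*$ is invertible because $\calW_\T=R\Z(\calX_\T,\calO_{\calX_\T},0)$.

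Your base-change argument for (a) is precisely what underlies the cited references: the identity $w_G\circ p_\flat^X=w_P=w_L\circ q_\flat^X$ makes both squares derived Cartesian, and derived base change together with the projection formula for the twist line bundle $\Omega$ gives compatibility of the products. For (b), your explicit computation that $[\calA_{\calW_\beta}]=[\calO_{X_\beta}]-[\calO_{X_\beta}]=0$ in $\G_0$ (using that $\T$-invariance of $w_\beta$ forces the Koszul generator to have internal weight zero) is exactly the mechanism behind the paper's bare assertion, and cleanly explains why the perfect subcategory contributes nothing to $\G_0$. In short, you have unpacked what the paper leaves implicit; the approaches are the same, but yours is self-contained.
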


\begin{proof}
A proof of (a) is sketched in \cite[prop~9.4]{P19}, see also \cite[prop~3.6]{P21}.
If $W=0$ then the derived stack $\calW_\T$ is the derived zero locus $R\Z(\calX_\T\,,\,\calO_{\calX_\T}\,,\,0)$.
In this case, the map $(i_0)_*$ is invertible, so we have an $\R_\T$-algebra isomomorphism
$\KHA(Q,0)_\T=\KHA(Q)_\T.$
\end{proof}

\smallskip

\subsubsection{Deformed K-theoretic Hall algebra of a triple quiver}\label{sec:triple}
Let $Q$ be any quiver and $\widetilde Q$ the triple quiver of $Q$.
Let $\T=\bbC^\times\times\A$ with $\A$ a subtorus.
Fix a weight function $\widetilde Q_1\to\X^*(\T)$ such that $q_h=q^{-2}$ and $t_h=1$ 
for each loop $h\in\omega(Q_0)$.
Fix a total ordering of the vertices.
Assume that $\bar Q_1^<\subset Q_1$.
We renormalize  the twist $\Omega$ in \eqref{Omega} to take account of the $\A$-action and the edge loops
in $Q_1$, see below.
We equip the $\R_\T$-module $\KHA(\widetilde Q)_\T$ 
with the multiplication $\circ$ in \eqref{tp}.
Our first goal is to compare the deformed K-theoretic Hall algebra of the triple quiver without potential
$\KHA(\widetilde Q)_\T$, the deformed nilpotent K-theoretic Hall algebra $\NKHA(\Pi_Q)_\T$, 
and a shuffle algebra that we introduce now.
We equip the $\K_\T$-vector space
\begin{align}\label{SHT}
\SH_{\T,\K}=\bigoplus_\beta\K_{G_{\beta,\T}}
\end{align}
with the shuffle product $\diamond$ with kernel $\zeta^\diamond_{\alpha,\gamma}$ given by
\begin{align}\label{kernel}
\zeta^\diamond_{\alpha,\gamma}=
\prod_{h\in Q_1}
\prod_{\substack{0<r\leqslant a_i\\b_j\geqslant s>a_j}}\frac{q_{h'}x_{i,r}-t_{h'}^{-1}x_{j,s}}{x_{i,r}-q_ht_hx_{j,s}}
\cdot\prod_{i\in Q_0}\prod_{\substack{0<r\leqslant a_i\\b_i\geqslant s>a_i}}
\frac{q^{-2}x_{i,r}-x_{i,s}}{x_{i,r}-x_{i,s}}.
\end{align}

\begin{lemma}\label{lem:KHAQT1}
There is an $\R_\T$-algebra embedding
$\nu:\KHA(\widetilde Q)_\T\to\SH_{\T,\K}^\diamond.$ 
\end{lemma}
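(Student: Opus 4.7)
The proof proceeds by an explicit comparison of shuffle kernels. First, I would use the identification \eqref{DKHAWP} to write $\KHA(\widetilde Q)_{\beta,\T} = \R_{G_{\beta,\T}}$, so that as an $\R_\T$-module $\KHA(\widetilde Q)_\T$ coincides with $\SH_\T = \bigoplus_\beta \R_{G_{\beta,\T}}$. The natural inclusion $\SH_\T \hookrightarrow \SH_{\T,\K}$ is already injective on each graded piece; I will define $\nu$ as a suitable \emph{rescaling} of this inclusion.

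Next I would compute the kernel $\zeta^\circ_{\alpha,\gamma}$ of the multiplication $\circ$ on $\KHA(\widetilde Q)_\T$ transported to $\SH_\T$. By (the deformed analogue of) Lemma \ref{lemma:shuffle}, the untwisted kernel $\zeta^*_{\alpha,\gamma}$ is obtained from \eqref{kernel0} by running the product over $h \in \widetilde Q_1 = \bar Q_1 \sqcup \omega(Q_0)$ with the weights $q_h t_h$ for $h\in\bar Q_1$ and $q_{\omega_i} = q^{-2}$, $t_{\omega_i} = 1$ for the loops. This produces cross factors $(1 - x_{i,r}/(q_h t_h)x_{j,s})$ for $h \in Q_1$ (with $r>a_i$, $s\leqslant a_j$), the symmetric cross factors from $h'\in Q'_1$, loop contributions $(1 - q^2 x_{i,r}/x_{i,s})$, and the diagonal denominators $(1 - x_{i,r}/x_{i,s})^{-1}$. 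Passing from $*$ to $\circ$ via \eqref{tp} multiplies $\zeta^*$ by the $K$-theoretic character of the renormalized twist $\Omega$, which is a monomial in the $x_{i,r}$'s, $q$ and the $t_h$'s accounting for $\det(\bar X_U^<)$ together with the edge loops.

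The key step is to produce a family of nonzero rational functions $\eta_\beta \in \K_{G_{\beta,\T}}$ satisfying the boundary identity
\begin{align*}
\zeta^\diamond_{\alpha,\gamma} \;=\; \zeta^\circ_{\alpha,\gamma} \cdot \frac{\eta_\beta}{\eta_\alpha\,\eta_\gamma}
\end{align*}
and then to set $\nu_\beta(f) = \eta_\beta\, f$. The algebra homomorphism property follows formally: since $\eta_\alpha\otimes\eta_\gamma$ is $W_{\alpha,\gamma}$-invariant, one has
\begin{align*}
\nu_\beta(f\circ g) \;=\; \eta_\beta\,\Sym_x\!\bigl(\zeta^\circ_{\alpha,\gamma}\cdot(f\otimes g)\bigr) \;=\; \Sym_x\!\bigl(\zeta^\diamond_{\alpha,\gamma}\cdot(\eta_\alpha f\otimes\eta_\gamma g)\bigr) \;=\; \nu_\alpha(f)\diamond\nu_\gamma(g).
\end{align*}
Injectivity in each degree is immediate from $\eta_\beta \neq 0$ and the injectivity of $\R_{G_{\beta,\T}} \hookrightarrow \K_{G_{\beta,\T}}$.

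The main obstacle is the explicit identification of $\eta_\beta$ as a boundary. Using the homogeneity constraints \eqref{homogeneous} one can clear the mixed denominators of $\zeta^\diamond_{\alpha,\gamma}$ against the cross factors of $\zeta^\circ_{\alpha,\gamma}$ coming from $h$ and $h'$ in $\widetilde Q_1$, while the diagonal loop factor $(q^{-2}x_{i,r}-x_{i,s})/(x_{i,r}-x_{i,s})$ is matched by combining the $\omega_i$ contribution with the renormalization of $\Omega$. What must be checked is that after this cancellation the remaining factor separates as $\eta_\beta/(\eta_\alpha\eta_\gamma)$ with $\eta_\beta$ a product of monomials and linear factors in the $x_{i,r}$'s alone, independent of the splitting $\beta=\alpha+\gamma$. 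The candidate is an expression of the form $\eta_\beta = \prod_{i\in Q_0}\prod_{1\leqslant r<s\leqslant b_i}(\cdots)$ times a suitable overall monomial in $q$, $t_h$ and the $x_{i,r}$'s. The computation is elementary but bookkeeping-heavy; once carried out, multiplicativity and injectivity of $\nu$ are automatic.
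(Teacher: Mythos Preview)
Your strategy is exactly the paper's: identify $\KHA(\widetilde Q)_\T$ with $\SH_\T$ via \eqref{DKHAWP}, compute the twisted shuffle kernel $\zeta^\circ_{\alpha,\gamma}$, and then conjugate by a rational function $\eta_\beta\in\K_{G_{\beta,\T}}^\times$ so that $\zeta^\circ$ is carried to $\zeta^\diamond$. The formal argument you give for why $f\mapsto\eta_\beta f$ is an injective algebra map once $\zeta^\diamond_{\alpha,\gamma}=\zeta^\circ_{\alpha,\gamma}\cdot\eta_\beta/(\eta_\alpha\eta_\gamma)$ is correct.

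The concrete gap is your ansatz for $\eta_\beta$. A product of the shape $\prod_{i\in Q_0}\prod_{1\leqslant r<s\leqslant b_i}(\cdots)$ produces a coboundary $\eta_\beta/(\eta_\alpha\eta_\gamma)$ supported entirely on the \emph{diagonal} blocks (pairs of variables attached to the same vertex). But the ratio $\zeta^\diamond_{\alpha,\gamma}/\zeta^\circ_{\alpha,\gamma}$ contains cross factors coming from each arrow $h:i\to j$ in $Q_1$ with $i\neq j$, and no diagonal conjugation can cancel those. The paper's conjugating function is instead
\[
\eta_\beta \;=\; \prod_{h\in Q_1}\ \prod_{\substack{0<r\leqslant b_i\\0<s\leqslant b_j}}\bigl(1-x_{i,r}/q_ht_hx_{j,s}\bigr)^{-1},
\]
a product indexed by \emph{arrows} and by all pairs $(r,s)\in[1,b_i]\times[1,b_j]$. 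Its coboundary splits off precisely the two cross blocks $\{r\leqslant a_i,\,s>a_j\}$ and $\{r>a_i,\,s\leqslant a_j\}$, which is what is needed to pass from $\zeta^\circ$ to $\zeta^\diamond$; the diagonal factor $(q^{-2}x_{i,r}-x_{i,s})/(x_{i,r}-x_{i,s})$ is already present in $\zeta^\circ$ via the loop $\omega_i$ and needs no conjugation. With this corrected $\eta_\beta$ your argument goes through verbatim.
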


\begin{proof}
By \eqref{DKHAWP} we have an obvious $\R_\T$-module isomorphism
\begin{align}\label{isom2}\KHA(\widetilde Q)_\T=\bigoplus_\beta\R_{G_{\beta,\T}}.\end{align}
By Lemma \ref{lemma:shuffle}, 
the multiplication $*$ on $\KHA(\widetilde Q)_\T$ is the shuffle product with the kernel 
\begin{align}\label{kernel0}
\zeta_{\alpha,\gamma}^*=\prod_{h\in \widetilde Q_1}
\prod_{\substack{b_i\geqslant r>a_i\\0<s\leqslant a_j}}
(1-x_{i,r}/q_ht_hx_{j,s})
\cdot\prod_{i\in Q_0}\prod_{\substack{0<s\leqslant a_i\\b_i\geqslant r>a_i}}(1-x_{i,r}/x_{i,s})^{-1}.
\end{align}
The renormalized twist is
$$\Omega=\prod_{h\in Q_1}
\prod_{\substack{b_j\geqslant s>a_j\\0<r\leqslant a_i}}
(-q_{h'}x_{i,r}/q_ht_hx_{j,s}).$$
The twisted multiplication $\circ$ on $\KHA(\widetilde Q)_\T$ is a shuffle product with some kernel 
$\zeta^\circ_{\alpha,\gamma}$.
Conjugating the isomorphism \eqref{isom2}
by the rational function
$$\prod_{h\in Q_1}\prod_{\substack{0<r\leqslant b_i\\0<s\leqslant b_j}}(1-x_{i,r}/q_ht_hx_{j,s})^{-1}$$
yields an $\R_\T$-algebra embedding
\begin{align}\label{nu}
\nu:\KHA(\widetilde Q)_\T\to\SH_{\T,\K}^\diamond,\end{align} 
where the multiplication on the right hand side is the shuffle product with kernel
$$\zeta^\diamond_{\alpha,\gamma}=\zeta^\circ_{\alpha,\gamma}
\prod_{h\in Q_1}
\prod_{\substack{b_j\geqslant s>a_j\\0<r\leqslant a_i}}(1-x_{ir}/q_ht_hx_{js})^{-1}
\prod_{\substack{b_i\geqslant r>a_i\\0<s\leqslant a_j}}(1-x_{ir}/q_ht_hx_{js})^{-1}
$$
A direct computation shows that $\zeta^\diamond_{\alpha,\gamma}$ is given by the formula \eqref{kernel},
proving the lemma.
\end{proof}

\smallskip

We equip the quiver $\widetilde Q$ with the potential 
$$W=\sum_{h\in Q_1}\sum_{i\in Q_0}(x_hx_{h'}x_{\omega_i}-x_{h'}x_hx_{\omega_i}).$$
The trace of $W$ is the map
\begin{align}\label{w}
w_\beta:\widetilde X_\beta\to\bbC
\quad,\quad
x\mapsto\sum_{h\in Q_1}\sum_{i\in Q_0}\Tr([x_h,x_{h'}]x_{\omega_i})=
-\sum_{h\in Q_1}\sum_{i\in Q_0}\Tr([x_h,x_{\omega_i}]x_{h'}).\end{align}
The groups $G_\beta$ and $\T$ act on $\frakg_\beta$ by conjugation and through the character $q^{-2}$.
Set $\widetilde\calX_{\beta,\T}=[\widetilde X_\beta\,/\,G_{\beta,\T}]$.
The map $w_\beta$ factors to a map $\widetilde \calX_{\beta,\T}\to\bbC$.
 We also denote it by $w_\beta$.
 The derived zero locus of this map is  $\calW_{\beta,\T}$.
Via the obvious isomorphisms $\widetilde X_\beta=\bar X_\beta\times\frakg_\beta$ and 
$\frakg_\beta=\frakg_\beta^*$, the map $w_\beta$ is identified with
the section $\mu_\beta$ of the vector bundle $E_\beta\to\bar\calX_\beta$.
The dimensional reduction \eqref{dimred0} 
yields an equivalence of graded triangulated categories
\begin{align}\label{K1}D^\b(T^*\calX_{\beta,\T})=D^{\gr,\sg}(\calW_{\beta,\T}).\end{align}
Taking the Grothendieck groups as in \S\ref{sec:K-theory}
yields an $\R_{G_{\beta,\T}}$-module isomorphism
\begin{align}\label{K2}\KHA(\Pi_Q)_{\beta,\T}=\KHA(\widetilde Q,W)_{\beta,\T}.\end{align}
Taking the sum over all dimension vectors we get an an $\R_\T$-module isomorphism
\begin{align}\label{K3}\KHA(\Pi_Q)_\T=\KHA(\widetilde Q,W)_\T.\end{align} 

\smallskip

\begin{remark} By  \cite[(16)]{P21} the map \eqref{K2} is an $\R_\T$-agebra homomorphism.

\end{remark}

\smallskip

\subsection{Proof of Theorem \ref{thm:A}}\label{sec:ProofA}

We need more material to prove the theorem.

\subsubsection{Torsion freeness}\label{sec:TF}
Let $Q$ be any quiver.
Let $Q^+$ be the subquiver of $\widetilde Q$
with vertex set $Q_0$ and arrow set $Q_1^+=Q_1\sqcup \omega(Q_0)$.
Let $X_\beta^+$ be the space of all $\beta$-dimensional representations of $Q^+$.
We have $X_\beta^+=X_\beta\times\frakg_\beta$.
Consider a torus  $\T=\bbC^\times\times\A$ as in \S\ref{sec:TQ1}, an homogeneous 
$G_{\beta,\T}$-action on $\bar X_\beta$ as in \eqref{actionT}, and its restriction to $X_\beta$.
Let the groups $G_\beta$ and $\T$ act on 
$\frakg_\beta$ by conjugation and by multiplication by the character  $q^{-2}$ respectively.
The group $G_{\beta,\T}$ acts diagonally on $\widetilde X_\beta$ and $X_\beta^+$.
Let $Z$ be an arbitrary 
$G_{\beta,\T}$-invariant locally closed subset of $\frakg_\beta$.
We define 
$$I_Z=I_\beta\cap(X_\beta\times Z)
\quad,\quad
I_\beta=\{\,(x,\omega)\in X_\beta^+\,;\,[x,\omega]=0\,\}$$
with $[x,\omega]=0$ meaning $x_h\omega_i=\omega_jx_h$ for each arrow $h\in Q_1$.
Let $\calI_{\beta,\T}$ and $\calI_{Z,\T}$ denote the quotient stacks 
$$\calI_{\beta,\T}=[I_\beta\,/\,G_{\beta,\T}]
\quad,\quad
\calI_{Z,\T}=[I_Z\,/\,G_{\beta,\T}]
\quad,\quad
\calX^+_{\beta,\T}=[X_\beta^+\,/\,G_{\beta,\T}].$$
Let $\frakN_\beta$ be the nilpotent cone in $\frakg_\beta$.

\smallskip

\begin{lemma}\label{lem:red} Let $\T=\bbC^\times\times\A$ as above.
\hfill
\begin{itemize}[leftmargin=8mm]
\item[$\mathrm{(a)}$] 
$\G_0(\calI_{\beta,\T})=\G_0(\calM_{\beta,\T})$ as $\R_{G_{\beta,\T}}$-modules.
\item[$\mathrm{(b)}$] 
$\G_0(\Lambda_{\beta,\T})$ is a free $\R_\T$-module.
\item[$\mathrm{(c)}$] 
The direct image by the obvious closed embedding $\calI_{\frakN_\beta,\T}\subset\calI_{\beta,\T}$ 
yields an isomorphism
$$\G_0(\calI_{\frakN_\beta,\T})\otimes_{\R_\T}\K_\T\to\G_0(\calI_{\beta,\T})\otimes_{\R_\T}\K_\T.$$
\item[$\mathrm{(d)}$] 
If condition \eqref{large} below holds then the direct image by the obvious closed embedding 
$\Lambda_{\beta,\T}\subset\calM_{\beta,\T}$
yields an isomorphism
$$\G_0(\Lambda_{\beta,\T})\otimes_{\R_\T}\K_\T\to
\G_0(\calM_{\beta,\T})\otimes_{\R_\T}\K_\T.$$
\end{itemize}
\end{lemma}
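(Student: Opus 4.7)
The plan is to treat each part separately, using a different technique.

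\emph{Part (a).} I would identify both $\G_0(\calI_{\beta,\T})$ and $\G_0(\calM_{\beta,\T})$ with $\G_0^\sg(\calW_{\beta,\T})$ by applying the $K$-theoretic dimensional reduction \eqref{Kdimred0} (in its $\T$-equivariant form as in \cite{H17}) to the same dg-stack $\calW_{\beta,\T}$ in two complementary directions. On the presentation $\widetilde\calX_{\beta,\T}=[\bar X_\beta\times\frakg_\beta/G_{\beta,\T}]$ the potential $w_\beta$ of \eqref{w} is linear in $\omega\in\frakg_\beta$, with coefficients the components of $\mu_\beta$; dimensional reduction in the $\omega$-direction therefore identifies $\G_0(\calM_{\beta,\T})$ with $\G_0^\sg(\calW_{\beta,\T})$, which is precisely the isomorphism \eqref{K2}. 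The cyclic-trace rewriting displayed in \eqref{w} shows that the same $w_\beta$ is also linear in $x'\in X'_\beta$, with coefficients cutting out the commuting variety $I_\beta\subset X_\beta^+$; dimensional reduction in the $x'$-direction then identifies $\G_0(\calI_{\beta,\T})$ with $\G_0^\sg(\calW_{\beta,\T})$. Composing the two gives (a).

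\emph{Part (b).} I would invoke the Lusztig-type stratification of the nilpotent Lagrangian $N_\beta$ by $G_\beta$-stable locally closed strata, each of which is an iterated affine bundle over a product of partial flag varieties (see \cite{L91} and the review in \cite[\S 3]{SV18}). The induced filtration of $\Lambda_{\beta,\T}$ produces an equivariant affine paving whose associated graded in $G_{\beta,\T}$-equivariant $K$-theory is a direct sum of shifted copies of $\R_{G_{\beta,\T}}$. Hence $\G_0(\Lambda_{\beta,\T})$ is free over $\R_{G_{\beta,\T}}$, and since $\R_{G_{\beta,\T}}$ is itself free over $\R_\T$ by \eqref{R}, freeness over $\R_\T$ follows.

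\emph{Parts (c) and (d).} Both are Thomason-style concentration statements. For (c), I would use the action of the $\bbC^\times$-factor of $\T$ on $\frakg_\beta$ through the character $q^{-2}$: a $G_\beta$-orbit $G_\beta\cdot\omega$ is $\T$-fixed in $[\frakg_\beta/G_{\beta,\T}]$ iff $\omega$ is conjugate to $t^{-2}\omega$ for all $t\in\bbC^\times$, and comparing spectra forces $\omega\in\frakN_\beta$ (for nilpotent $\omega$ the required conjugation is supplied by Jacobson--Morozov). Thus $\calI_{\beta,\T}^{\,\T}=\calI_{\frakN_\beta,\T}^{\,\T}$, and concentration after $\otimes_{\R_\T}\K_\T$ yields the asserted isomorphism. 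For (d), I would apply the same principle to the $\T$-action on $\bar X_\beta$: a representation of $\bar Q$ that is not strongly seminilpotent supports a non-nilpotent cycle whose $\T$-weight is non-trivial, so its $G_\beta$-orbit is not $\T$-fixed; this gives $\calM_{\beta,\T}^{\,\T}\subset\Lambda_{\beta,\T}^{\,\T}$ and concentration concludes.

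The main obstacle is the cellularity in (b): one must verify that the Lusztig stratification of $\Lambda_\beta$ really is by iterated affine bundles in the Kac--Moody generality considered, with particular care when components of $\Lambda_\beta$ fail to be equidimensional. A secondary subtlety is in (c) and (d), where the passage from ``no $\T$-fixed $G_\beta$-orbits on the complement'' to the vanishing of the localized equivariant $K$-theory of the complement requires a stacky form of Thomason's concentration theorem applied to the combined $G_{\beta,\T}$-action.
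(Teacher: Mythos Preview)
Your approach to (a), (c), (d) is essentially the paper's. For (a) the paper applies dimensional reduction to $w_\beta$ in the $\frakg_\beta$-direction and in the $X'_\beta$-direction exactly as you describe, composing the two equivalences through $D^{\gr,\sg}(\calW_{\beta,\T})$. For (c) and (d) the paper also argues by localization: it shows that any point in the complement lies on an orbit whose stabilizer in $G_{\beta,\T}$ misses the $\bbC^\times$-factor (for (c) because $\bbC^\times$ scales $\frakg_\beta$ by $q^{-2}$; for (d) because $N_\beta$ is the zero fiber of $M_\beta\to M_\beta/\!/G_\beta$ by the Hilbert criterion, so outside it some non-constant $G_\beta$-invariant with nonzero $\bbC^\times$-weight survives), hence the $\G_i$ of the complement is $(q-1)$-power torsion by Noetherian induction. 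Your phrasing via $\T$-fixed $G_\beta$-orbits is the same mechanism; you should however replace the vague ``non-nilpotent cycle'' in (d) by the Hilbert-criterion argument, since it is not obvious a priori that an arbitrary cycle in $\bar Q$ has nonzero $\bbC^\times$-weight.

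Part (b) is where your proposal diverges, and here there is a genuine gap. The ``Lusztig stratification'' of $N_\beta$ does not produce strata that are iterated affine bundles over products of partial flag varieties; what \cite{L91} and \cite[\S 5]{SV18} give is an inductive vector-bundle-stack description of $\Lambda_\beta^{l\alpha_i}$ over a \emph{smaller} nilpotent stack $\Lambda_{\alpha,\T}^{0\alpha_i}\times\Lambda_{\gamma,\T}$, not over a cellular variety. So your claimed affine paving is not available, and in particular your argument would yield freeness over $\R_{G_{\beta,\T}}$, which is stronger than what is asserted and stronger than what the paper establishes anywhere (torsion-freeness over $\R_{G_{\beta,\T}}$ appears only later, in Proposition~\ref{prop:TFT}, and only under the extra hypothesis \eqref{large}). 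The paper's route is quite different: it realizes $\Lambda_{\beta,\T}$ as a retract, in equivariant $K$-theory, of the Nakajima Lagrangian $\frakL_\beta$ for the framed quiver with framing vector $\beta$. Concretely, it builds a chain of maps $\G_0(\Lambda_{\beta,\T})\to\G_0([\frakL_\beta/F_{\beta,\T}])\to\G_0(\Lambda_{\beta,\T})$ composing to the identity, where the middle term is free over $\R_\T$ by Nakajima's decomposition theorem \cite[thm~7.3.5]{N00}. This bypasses any cellularity of $\Lambda_\beta$ itself and works for an arbitrary quiver $Q$, not only the Kac--Moody case your stratification argument would need.
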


\begin{proof}
The projection $\widetilde X_\beta\to X^+_\beta$ forgetting the arrows of the quiver $Q'_1$
yields a vector bundle
$V_\beta$ over $\calX^+_{\beta,\T}$ with fibers isomorphic to $X'_\beta$.
The function $w_\beta$ in  \eqref{w} is fiberwise linear, hence it can be viewed as a section of the dual bundle 
$V_\beta^*$. 
By \eqref{dimred0}, there is an equivalence of graded triangulated categories
\begin{align*}D^\b(R\Z(\calX^+_{\beta,\T}, V^*_\beta,w_\beta))=D^{\gr,\sg}(\calW_{\beta,\T}).\end{align*}
Composing it with the equivalence \eqref{K1}, we deduce that
$$D^\b(R\Z(\calX^+_{\beta,\T}, V^*_\beta,w_\beta))=D^\b(T^*\calX_{\beta,\T}).$$
The truncations of these dg-stacks are
$$\pi_0(R\Z(\calX^+_{\beta,\T}, V_\beta^*,w_\beta)))=\calI_{\beta,\T}
\quad,\quad
\pi_0(T^*\calX_{\beta,\T})=\calM_{\beta,\T}.$$
Taking the Grothendieck groups as in \eqref{K2},
using the identity \eqref{truncation}, yields the isomorphism in (a).

\smallskip

Let us concentrate on (b).
We'll use Nakajima's quiver varieties. 
To do that, let the group $G_\beta\times G_{\beta,\T}$ acts on $\bar X_\beta\times(\frakg_\beta)^2$ as follows
$$(f,g,u,v)\cdot(x,i,j)=((g,u,v)\cdot x\,,\,ugif^{-1}\,,\,ufjg^{-1})
\quad,\quad
f,g\in G_\beta, (u,v)\in\T=\bbC^\times\times\A.$$
We'll use the following subgroups of $G_\beta\times G_{\beta,\T}$
\begin{align*}
G_{\beta,\T}=\{f=1\}\quad,\quad
G_\beta=\{f=u=v=1\}\quad,\quad F_{\beta,\T}=\{g=1\}\quad,\quad F_\beta=\{g=u=v=1\}.
\end{align*}
The Nakajima quiver variety $\frakM_\beta$
associated with the quiver $\bfQ$ and the pair of dimension vectors $(\beta,\beta)$
is defined as
the geometric quotient by $G_\beta$ of the variety of all triples
$(x,i,j)$ such that $\mu_\beta(x)+ij=0$ which are \emph{stable}, i.e., 
the kernel $\Ker j$ does not contain any non-zero $Q_0$-graded 
subspace stable by the action of the representation $x$.
This is a smooth quasi-projective variety by \cite[lem.~3.10]{N98}.
Note that loc. cit. considers only Kac-Moody quivers, but the proof there holds also for any $Q$.
Let $L_\beta$ be the variety of all triples $(x,i,j)$ such that $x\in N_\beta$ and $i=0$.
The variety $\frakM_\beta$ contains a  projective
subvariety $\frakL_\beta$ 
which is the geometric quotient
of the set of all stable triples $(x,i,j)$ in $L_\beta$.
See, e.g., \cite[\S 4.1.1]{SV18}.
We have open embeddings
\begin{align*}
&\frakL_\beta'=\{(x,i,j)\in L_\beta\ \,;\,j\in G_\beta\}\,/\,G_\beta
\subset\,\frakL_\beta\,\subset\,[L_\beta\,/\,G_\beta].
\end{align*}
Note that
$$\Lambda_{\beta,\T}=[\{(x,i,j)\in L_\beta\,;\,j=0\}\,/\,G_{\beta,\T}].$$
The group $F_\beta$ acts trivially on $\Lambda_{\beta,\T}$.
We consider the following diagram
\begin{align*}
\xymatrix{
\G_0(\Lambda_{\beta,\T})\ar[r]^-a&
\G_0(\Lambda_{\beta,\T}\times BF_\beta)\ar[r]^-b&
\G_0([\,L_\beta\,/\,G_\beta\times F_{\beta,\T}\,])\ar[d]^-c\\
&
\G_0([\,\frakL_\beta'\,/\,F_{\beta,\T}\,])\ar[lu]_-e&
\G_0([\,\frakL_\beta\,/\,F_{\beta,\T}\,])\ar[l]_-d
}
\end{align*}
The map $a$ is the inclusion 
$$a=id\otimes 1 : \G_0(\Lambda_{\beta,\T})\to\G_0(\Lambda_{\beta,\T}\times BF_\beta)=
\G_0(\Lambda_{\beta,\T})\otimes \R_{F_\beta}.$$
The map $b$ is the pullback by the obvious vector bundle
$$[\,L_\beta\,/\,G_\beta\times F_{\beta,\T}\,]\to \Lambda_{\beta,\T}\times BF_\beta.$$
It is invertible.
The maps $c$, $d$ are the open restrictions.
The assignment $(x,i,j)\mapsto (x,0,0)$ yields a stack isomorphism
$[\frakL_\beta'\,/\,F_{\beta,\T}]\to\Lambda_{\beta,\T}$, hence an isomorphism $e$ of the Grothendieck groups.
All maps are $\R_\T$-linear.
The composed map $edcba$ is the identity of $\G_0(\Lambda_{\beta,\T})$.
So, the identity factors through the $\R_\T$-module
$\G_0([\frakL_\beta\,/\,F_{\beta,\T}])$. The latter is a free $\R_\T$-module by \cite[thm~7.3.5]{N00},
hence $\G_0(\Lambda_{\beta,\T})$ is also free over $\R_\T$. 
Note that loc. cit. considers only Kac-Moody quivers, but the proof there extends to any 
quiver $Q$ as in \cite[prop.~4.3]{SV18}.

\smallskip

To prove (c) it is enough to check that
$$\G_1(\calI_{\beta,\T}\,\setminus\,\calI_{\frakN_\beta,\T})\otimes_{\R_\T}\K_\T=
\G_0(\calI_{\beta,\T}\,\setminus\,\calI_{\frakN_\beta,\T})\otimes_{\R_\T}\K_\T=0.$$
The torus $\T$ acts on $\frakg_\beta$ via the character $q^{-2}$.
Thus, for each $\bbC$-point $(x,\omega)$ of $I_\beta\,\setminus\, I_{\frakN_\beta}$, 
there is a $G_\beta$-invariant regular function 
on $X_\beta^+$ which does not vanish at $(x,\omega)$ 
and has nonzero weight for the action of the subgroup $\bbC^\times\subset \T$.
Hence the stabilizer of $(x,\omega)$ in $G_{\beta,\T}$ 
is contained in the subgroup $G_\beta\times\{1\}\times\A$. We deduce that the
Grothendieck group
$\G_i([G_{\beta,\T}\cdot(x,\omega)\,/\,G_{\beta,\T}])$ is killed by  the multiplication by the element $q-1$ 
in $\R_{G_{\beta,\T}}$ for $i=0,1$.
The localization exact sequence and the Noetherian induction imply that
the $\R_{G_{\beta,\T}}$-module $\G_i(\calI_{\beta,\T}\,\setminus\,\calI_{\frakN_\beta,\T})$
is killed by some positive power of $q-1$.

\smallskip

To prove (d) it is enough to check that
$$\G_1(\calM_{\beta,\T}\,\setminus\,\Lambda_{\beta,\T})\otimes_{\R_\T}\K_\T=
\G_0(\calM_{\beta,\T}\,\setminus\,\Lambda_{\beta,\T})\otimes_{\R_\T}\K_\T=0.$$
Recall the definition of the nilpotent variety $N_\beta$ in \S\ref{sec:nilp}.
By the Hilbert criterion and Condition \eqref{large}
for each $\bbC$-point $x$ of $\calM_{\beta,\T}\,\setminus\,\Lambda_{\beta,\T}$, 
there is a $G_\beta$-invariant regular function 
on $M_\beta$ which does not vanish at $x$ 
and has nonzero weight for the action of the subgroup $\theta$ of $\T$.
The rest of the proof is as for part (c), using the localization exact sequence and Noetherian induction.
\end{proof}

\smallskip

Fix a cocharacter $\theta\in\X_*(\T)$.
Composing $\theta$ with the inclusion $\T\subset G_{\beta,\T}$,
we may view it as a cocharacter
of $G_{\beta,\T}$.
Let $S_\theta\subset\R_{G_{\beta,\T}}$ be the complement of the kernel of the comorphism
$\theta^*:\R_{G_{\beta,\T}}\to\R$.
We consider the following condition on the action of  $\theta$ on $\bar X$
\begin{align}\label{large}
\theta(v)\cdot x_h=v^{n_h}\,x_h\quad,\quad
\theta(v)\cdot x_{h'}=v^{n_{h'}}\,x_{h'}\quad,\quad
n_{h'}=-n_h< 0\quad,\quad
h\in Q_1, v\in\bbC^\times.
\end{align}
Under this condition the cocharacter $\theta$ acts trivially on the factor $\frakg_\beta$ of
$X_\beta^+=X_\beta\times\frakg_\beta$.
Note that \eqref{homogeneous} implies that for \eqref{large} to hold the torus $\A$ must be non trivial, i.e.,
we must have $\dim(\T)\geqslant 2$.

\smallskip

\begin{lemma}\label{lem:2.2}  
Assume that \eqref{large} holds. 
\hfill
\begin{itemize}[leftmargin=8mm]
\item[$\mathrm{(a)}$]
$\G_0(\calI_{O,\T})$ is torsion-free over $S_\theta$ for each
each $G_\beta$-orbit $O\subset \frakN_\beta$.
\item[$\mathrm{(b)}$] 
The partition $\calI_{\frakN_\beta,\T}=\bigsqcup_O\calI_{O,\T}$ 
yields a filtration on the $\R_{G_{\beta,\T}}$-module
$\G_0(\calI_{\frakN_\beta,\T})$.
The associated graded is isomorphic to $\bigoplus_O\G_0(\calI_{O,\T})$.
\item[$\mathrm{(c)}$] 
$\G_0(\calI_{\frakN_\beta,\T})$ is torsion-free over $S_\theta$.
\end{itemize}
\end{lemma}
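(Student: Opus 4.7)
The plan is to prove the three claims as a cascade: (a) gives torsion-freeness stratum by stratum, (b) assembles the strata into a finite filtration with the expected associated graded, and (c) transfers torsion-freeness from the graded pieces to the whole module. For (a), I fix $\omega\in O$ and exploit the projection $I_O\to O$, $(x,\omega')\mapsto\omega'$. Since $G_{\beta,\T}$ acts transitively on $O$ (each nilpotent orbit is stable under the $\T$-scaling coming from the weight function on $\frakg_\beta$) and the fiber $I^\omega=\{x\in X_\beta\,:\,[x,\omega]=0\}$ is a linear subspace, passing to the quotient gives
\[
\calI_{O,\T}\cong \bigl[\,I^\omega\,/\,\mathrm{Stab}_{G_{\beta,\T}}(\omega)\,\bigr],
\]
and equivariant homotopy invariance yields $\G_0(\calI_{O,\T})\cong \R_{\mathrm{Stab}_{G_{\beta,\T}}(\omega)}$ as an $\R_{G_{\beta,\T}}$-module, with structure map the restriction of virtual characters. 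A Jacobson--Morozov section $\T\hookrightarrow\mathrm{Stab}_{G_{\beta,\T}}(\omega)$ built from the $\mathfrak{sl}_2$-triple attached to $\omega$ presents the stabilizer as $C_\beta(\omega)\rtimes \T$, so its reductive quotient is a product of centralizers in general linear groups and $\R_{\mathrm{Stab}(\omega)}$ is an integral domain. Torsion-freeness over $S_\theta$ then follows once one shows that the restriction map carries $S_\theta$ into nonzero elements; this I would verify by lifting $\theta$ through the Jacobson--Morozov section to a cocharacter $\tilde\theta$ of $\mathrm{Stab}_{G_{\beta,\T}}(\omega)$, together with the fact that rescaling a cocharacter preserves nonvanishing of its pullback in $\R=\bbQ[q,q^{\pm 1}]$.

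For (b), the nilpotent cone $\frakN_\beta=\prod_{i\in Q_0}\frakN_{\mathfrak{gl}_{b_i}}$ has finitely many $G_\beta$-orbits by the Jordan classification. Choose an enumeration $O_1,\ldots,O_N$ compatible with the closure order, set $Z_k=\bigsqcup_{l\leqslant k}O_l$, and define $F_k:=\mathrm{image}\bigl(\G_0(\calI_{Z_k,\T})\to\G_0(\calI_{\frakN_\beta,\T})\bigr)$. Each closed inclusion $\calI_{Z_{k-1},\T}\subset\calI_{Z_k,\T}$ has open complement $\calI_{O_k,\T}$; the localization exact sequence in equivariant K-theory produces a surjection $\G_0(\calI_{O_k,\T})\twoheadrightarrow F_k/F_{k-1}$, whose kernel is the image of a $\G_1$-boundary landing in the already-filtered piece. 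Invoking (a), the source is $S_\theta$-torsion-free while the image lies in a controlled torsion portion; a standard argument then forces the kernel to vanish, identifying $\gr \G_0(\calI_{\frakN_\beta,\T})\cong\bigoplus_k\G_0(\calI_{O_k,\T})$.

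For (c), an extension of two $S_\theta$-torsion-free $\R_{G_{\beta,\T}}$-modules is again $S_\theta$-torsion-free: exactness of $S_\theta^{-1}(-)$ combined with the snake lemma shows that multiplication by any $s\in S_\theta$ is injective on the middle term whenever it is injective on both outer terms. A finite induction on the length $N$ of the filtration from (b), with torsion-free subquotients supplied by (a), then yields (c).

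The main obstacle I anticipate is the verification in (a) that the restriction map $\R_{G_{\beta,\T}}\to\R_{\mathrm{Stab}_{G_{\beta,\T}}(\omega)}$ sends $S_\theta$ to nonzerodivisors: since $\theta\in\X_*(\T)$ need not a priori lie in $\mathrm{Stab}_{G_{\beta,\T}}(\omega)$ when $q^2\circ\theta$ is nontrivial, one has to route the evaluation through the Jacobson--Morozov twist and pay attention to how a possible doubling of $\theta$ interacts with $\theta^*$; the remaining steps are routine applications of the localization sequence and standard homological algebra.
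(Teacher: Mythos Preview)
Your argument for (c) is the paper's, and your route through (a) arrives at the same module $\G_0(\calI_{O,\T})\cong \R_{G_{O,\T}}$, though the paper reaches it more geometrically: it exhibits an explicit composition of affine-bundle stacks $\calI_{O,\T}\to\bigl[\prod_k X_{\beta(O,k)}\,/\,G_{O,\T}\bigr]$ built from the flag $V_k=\Im(\omega^k)$, rather than reducing to the fiber over a fixed $\omega$. Your concern in the final paragraph is well-placed: the factorization of $\theta^*$ through $\res_O$ is immediate only when $q\circ\theta$ is trivial, since only then does the image of $\theta$ already lie in $G_{\omega,\T}$. The paper asserts this factorization without comment; the only case it actually uses is $\theta(v)=(1,v)$ with $q\circ\theta=1$ (see the paragraph after Proposition~\ref{prop:TFT}). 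For general $\theta$ your Jacobson--Morozov twist $\tilde\theta$ is not conjugate to $\theta$ in $G_{\beta,\T}$ (the factor $\T$ is central), so $\tilde\theta^*\neq\theta^*$ on $\R_{G_\beta}$ and the proposed fix does not go through.

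The substantive gap is in (b). The localization sequence gives only a \emph{surjection} $\G_0(\calI_{O_k,\T})\twoheadrightarrow F_k/F_{k-1}$, and nothing in (a) forces its kernel to vanish. The sentence ``the image lies in a controlled torsion portion; a standard argument then forces the kernel to vanish'' is unsupported: the obstruction lives in the image of boundary maps out of $\G_1$ of open complements, and there is no mechanism making these $S_\theta$-torsion --- algebraic $\G_1$ of a classifying stack over $\bbC$ already contains $\bbC^\times\otimes\bbQ$. What is needed is injectivity of every closed pushforward $\G_0(\calI_{Z_{k-1},\T})\to\G_0(\calI_{Z_k,\T})$, and the paper obtains this by an entirely different device: a comparison with \emph{topological} equivariant K-theory (Lemma~\ref{lem:FILT}). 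The affine-bundle description from (a) shows that each stratum is an affine bundle over $BG_{O,\T}$ with $G_{O,\T}$ connected reductive, so on each stratum the cycle map $\G_0\to\G_0^\top$ is an isomorphism and $\G_1^\top=0$; a five-lemma induction on the pair of localization sequences, algebraic and topological linked by the cycle map, then yields the required injectivity. Without this passage through topological K-theory, or an independent vanishing of the algebraic boundary maps, (b) does not follow from the localization sequence and (a) alone.
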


\smallskip

\begin{proof}
For any $\bbC$-point $(x,\omega)$ of $I_\beta$ and any integer $k\geqslant 0$,  
the representation $x$ preserves the $Q_0$-graded vector space 
$V_k=\Im(\omega^k)$, yielding a flag of representations of the quiver $Q$
$$0=V_s\subset\cdots\subset V_1\subset V_0=V.$$
Define $V'_k=V_k\,/\,V_{k+1}$.
The map $\omega$ yields a chain of surjective morphisms of representations
$$V'_0\twoheadrightarrow V'_1\twoheadrightarrow\cdots\twoheadrightarrow V'_s.$$
For each integer $k=0,\dots s,$  let $x^\omega_k$ be the representation induced by $x$ on 
the  $Q_0$-graded vector space $V^\omega_k=\Ker(V'_k\to V'_{k+1}).$
Fix a $G_\beta$-orbit $O\subset \frakN_\beta$ and a $\bbC$-point $\omega\in O$. 
Let $G_{\omega,\T}$ be the stabilizer of $\omega$ in $G_{\beta,\T}$.
We have 
$$\calI_{O,\T}=[\,I_\beta\cap(X_\beta\times\{\omega\})\,/\,G_{\omega,\T}].$$
The dimension vector of the $Q_0$-graded vector space $V^\omega_k$ 
does not depend on the choice of $\omega$ in $O$.
Let us denote it by $\beta(O,k)$.
Set 
$$X_O=\prod_{k>0}X_{\beta(O,k)}
\quad,\quad
G_{O,\T}=(\prod_{k>0}G_{\beta(O,k)})\times \T
\quad,\quad
\calX_{O,\T}=[X_O\,/\,G_{O,\T}]
.$$
The group $G_{O,\T}$ is the reductive part of $G_{\omega,\T}$.
In particular, there is an obvious embedding $G_{O,\T}\subset G_{\omega,\T}$.
The assignment
$$\calI_{O,\T}\to\calX_{O,\T}\quad,\quad (x,\omega)\mapsto(x^\omega_k\,;\,k=0,\dots s)$$ 
is a composition of vector bundle stacks, 
because the module category of the quiver $Q$ is quasi-hereditary and
the representation $x$ is reconstructed inductively from the $x^\omega_k$'s
via the short exact sequences 
$$0\to V_{k+1}\to V_k\to V'_k\to 0
\quad,\quad
0\to V^\omega_k\to V'_k\to V'_{k+1}\to 0.
$$
Hence,
the  pullback gives group isomorphisms
\begin{align}\label{isom1}
\G_i(\calI_{O,\T})=\G_i(\calX_{O,\T})
\quad,\quad
\G_i(\calI_{O,\T})^\top=\G_i(\calX_{O,\T})^\top
\quad,\quad
\forall i=0,1.
\end{align}
In particular, since $X_O$ is a vector space we have an isomorphism
$$\G_0(\calI_{O,\T})=\G_0(\calX_{O,\T})=\R_{G_{O,\T}}.$$
The $\R_{G_{\beta,\T}}$-action on $\G_0(\calI_{O,\T})$ factors through
the restriction $\R_{G_{\beta,\T}}\to\R_{G_{\omega,\T}}$.
Since $G_{O,\T}$ is the reductive part of $G_{\omega,\T}$, there is a canonical isomorphism
$\R_{G_{\omega,\T}}=\R_{G_{O,\T}}$.
As an $\R_{G_{\beta,\T}}$-module, the Grothendieck group $\G_0(\calI_{O,\T})$ is the pull-back
of the regular $\R_{G_{O,\T}}$-module $\R_{G_{O,\T}}$ by the restriction 
$\res_O:\R_{G_{\beta,\T}}\to\R_{G_{\omega,\T}}=\R_{G_{O,\T}}$.
The condition \eqref{large} implies that $\theta$ acts trivially on $\omega$, i.e., 
we have $\theta(\bbC^\times)\subset G_{\omega,\T}$.
Therefore, the comorphism
$\theta^*:\R_{G_{\beta,\T}}\to\R$ factors through
the map $\res_O$, i.e., we have
$$\Ker(\res_O)\subset\Ker(\theta^*).$$
We deduce that the map $\res_O$ is injective on the multiplicative set $S_\theta$.
The claim (a) follows.
Part (b) is a consequence of \eqref{isom1} and the following general fact which is recalled in Appendix \ref{app:C}.

\begin{lemma}\label{lem:FILT}
Let $G$ be a complex linear group and $X$ a complex $G$-variety. 
Let $X=\bigsqcup_{O\in\Pi} X_O$ be a finite partition by $G$-equivariant locally closed subsets indexed
by a poset such that $\bigsqcup_{O'\leqslant O} X_{O'}$ is closed in $X$ for each $O$.
Set $\calX_O=[X_O\,/\,G]$ and $\calX=[X\,/\,G]$.
Assume that the cycle map $\G_0(\calX_O)\to\G_0(\calX_O)^\top$ is an isomorphism and
$\G_1(\calX_O)^\top=\{0\}$. 
Then, the cycle map $\G_0(\calX)\to\G_0(\calX)^\top$ is an isomorphism and
the partition of $X$ yields a filtration on the $\R_G$-module 
$\G_0(\calX)$ whose associated graded is isomorphic to
the direct sum $\bigoplus_O\G_0(\calX_O)$.
\qed
\end{lemma}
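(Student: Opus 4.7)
The plan is to reduce to a linear filtration and then proceed by induction on the length of the filtration, using the localization exact sequence in both algebraic and topological equivariant K-theory together with the five lemma.

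First I would choose a linear extension $O_1,O_2,\dots,O_n$ of the poset $\Pi$ compatible with the partial order, and set $Z_k=\bigsqcup_{i\leqslant k}X_{O_i}$. The hypothesis forces each $Z_k$ to be closed in $X$, so that $X_{O_k}=Z_k\setminus Z_{k-1}$ is open in the closed subvariety $Z_k$. I define $F_k\G_0(\calX)$ to be the image of the pushforward $\G_0([Z_k/G])\to\G_0(\calX)$; the goal is to show that this pushforward is injective for every $k$, so that the filtration $F_\bullet$ has associated graded pieces canonically isomorphic to the $\G_0(\calX_{O_k})$.

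The induction would run as follows. Consider the commutative ladder between the localization exact sequences of the pair $(Z_k,X_{O_k})$ in algebraic and in topological equivariant K-theory:
\begin{equation*}
\xymatrix@C=6mm{
\G_1(\calX_{O_k})\ar[r]^-{\partial}\ar[d] & \G_0([Z_{k-1}/G])\ar[r]\ar[d]_-{\alpha_{k-1}} & \G_0([Z_k/G])\ar[r]\ar[d]_-{\alpha_k} & \G_0(\calX_{O_k})\ar[r]\ar[d]^-\wr & 0\\
\G_1(\calX_{O_k})^\top\ar[r]^-{\partial^\top} & \G_0([Z_{k-1}/G])^\top\ar[r] & \G_0([Z_k/G])^\top\ar[r] & \G_0(\calX_{O_k})^\top\ar[r] & 0.
}
\end{equation*}
The inductive hypothesis is that $\alpha_{k-1}$ is an isomorphism and that $F_\bullet\G_0([Z_{k-1}/G])$ has the expected associated graded; the base case $k=0$ is trivial. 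Since $\G_1(\calX_{O_k})^\top=0$ by assumption, the bottom boundary $\partial^\top$ vanishes, so the algebraic boundary $\partial$ lands in $\Ker(\alpha_{k-1})=0$. Hence the top row is also short exact, and applying the five lemma to the resulting ladder gives that $\alpha_k$ is an isomorphism and that $\G_0([Z_k/G])\to\G_0([Z_{k+1}/G])$ remains injective, with cokernel $\G_0(\calX_{O_{k+1}})$ at the next stage.

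Taking $k=n$ proves that the cycle map $\G_0(\calX)\to\G_0(\calX)^\top$ is an isomorphism, and the nested injections
\begin{equation*}
0=\G_0([Z_0/G])\hookrightarrow\G_0([Z_1/G])\hookrightarrow\cdots\hookrightarrow\G_0([Z_n/G])=\G_0(\calX)
\end{equation*}
provide the filtration with $\gr_k\cong\G_0(\calX_{O_k})$. The only subtle point is the vanishing of the boundary map $\partial$, which is precisely where the comparison with topological K-theory and the vanishing hypothesis $\G_1(\calX_{O})^\top=0$ are used; everything else is formal from the localization long exact sequence and devissage, and these are exactly the ingredients made available by Thomason's work cited in the paper.
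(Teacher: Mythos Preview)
Your proof is correct and follows essentially the same approach as the paper: refine the poset to a total order, run induction along the resulting chain of closed subvarieties, and at each step compare the algebraic and topological localization sequences, using the hypothesis $\G_1(\calX_{O_k})^\top=0$ to kill the boundary map and then the five lemma to propagate the cycle-map isomorphism. The paper additionally tracks the vanishing of $\G_1(\calX_{\leqslant O})^\top$ through the induction, but this is not needed for the conclusions about $\G_0$; your formulation is in fact slightly more economical.
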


Part (c) of the lemma follows from (a) and (b).
\end{proof}

\smallskip

\begin{proposition} \label{prop:TFT} 
The $\R_{G_{\beta,\T}}$-module $\G_0(\Lambda_{\beta,\T})$ is torsion-free if \eqref{large} holds.
\end{proposition}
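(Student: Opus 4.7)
The plan is to combine the identifications of Lemma~\ref{lem:red} with the $S_\theta$-torsion-freeness of Lemma~\ref{lem:2.2}(c) in order to embed $\G_0(\Lambda_{\beta,\T})$ into an $\R_{G_{\beta,\T}}$-torsion-free module. By Lemma~\ref{lem:red}(b), $\G_0(\Lambda_{\beta,\T})$ is $\R_\T$-free, so the localization map $\G_0(\Lambda_{\beta,\T}) \hookrightarrow \G_0(\Lambda_{\beta,\T}) \otimes_{\R_\T} \K_\T$ is injective. Chaining the $\R_{G_{\beta,\T}}$-linear isomorphisms
$$\G_0(\Lambda_{\beta,\T}) \otimes_{\R_\T} \K_\T \xrightarrow{\sim} \G_0(\calM_{\beta,\T}) \otimes_{\R_\T} \K_\T = \G_0(\calI_{\beta,\T}) \otimes_{\R_\T} \K_\T \xleftarrow{\sim} \G_0(\calI_{\frakN_\beta,\T}) \otimes_{\R_\T} \K_\T$$
from Lemma~\ref{lem:red}(a,c,d) yields an $\R_{G_{\beta,\T}}$-linear injection
$$\iota : \G_0(\Lambda_{\beta,\T}) \hookrightarrow \G_0(\calI_{\frakN_\beta,\T}) \otimes_{\R_\T} \K_\T.$$
It then suffices to show that the target is $\R_{G_{\beta,\T}}$-torsion-free on the image of $\iota$.

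To control the target, I would use the filtration of Lemma~\ref{lem:2.2}(b), whose graded pieces are the $\R_{G_{O,\T}}$'s. Here condition \eqref{large} enters crucially: a weight computation using the homogeneity relations $q_h q_{h'} = q^2$, $t_h t_{h'} = 1$ forces $q \circ \theta$ to be trivial, so $\theta$ acts trivially on $\frakg_\beta$ and $\theta(\bbC^\times)$ lies in every stabilizer $G_{\omega,\T}$. Consequently $\theta^*$ factors through each restriction $\res_O$, which is precisely the input for Lemma~\ref{lem:2.2}(c) and gives $S_\theta$-torsion-freeness on each graded piece. Together with the $\R_\T$-freeness from Lemma~\ref{lem:red}(b), this allows one to clear denominators: if $f \in \R_{G_{\beta,\T}} \setminus\{0\}$ annihilates $m \in \G_0(\Lambda_{\beta,\T})$, one writes $\iota(m) = [m']/s$ with $m' \in \G_0(\calI_{\frakN_\beta,\T})$ and $s \in \R_\T \setminus\{0\}$, and deduces $tfm' = 0$ in $\G_0(\calI_{\frakN_\beta,\T})$ for some $t \in \R_\T \setminus\{0\}$.

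The main obstacle will be to promote $S_\theta$-torsion-freeness to full $\R_{G_{\beta,\T}}$-torsion-freeness: the individual graded pieces $\R_{G_{O,\T}}$ carry genuine $\R_{G_{\beta,\T}}$-torsion whenever $\Ker(\res_O) \neq 0$, so one cannot naively iterate ``extension of torsion-free is torsion-free''. The delicate point is to choose the representative $[m']/s$ of $\iota(m)$ so that the resulting multiplier $t$ satisfies $tf \in S_\theta$, i.e., $\theta^*(tf) \neq 0$; the existence of such $t$ will be secured by a Noetherian induction along the orbit stratification of $\frakN_\beta$, using that the $\theta$-fixed locus of $\Lambda_\beta$ is the single point $0 \in X_\beta$ by \eqref{large} and that the only orbit on which $\res_O$ is injective is the zero orbit. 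Once this is in place, Lemma~\ref{lem:2.2}(c) applied to $tf$ forces $m' = 0$, hence $\iota(m) = 0$ and finally $m = 0$ by injectivity of $\iota$.
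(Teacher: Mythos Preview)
Your setup through the injection $\iota:\G_0(\Lambda_{\beta,\T})\hookrightarrow\G_0(\calI_{\frakN_\beta,\T})\otimes_{\R_\T}\K_\T$ is correct and matches the paper: this embedding, combined with Lemma~\ref{lem:2.2}(c), establishes that $\G_0(\Lambda_{\beta,\T})$ is $S_\theta$-torsion-free. Your observation that $q\circ\theta$ is trivial (from $q_hq_{h'}=q^2$ and \eqref{large}) is also right and is what makes Lemma~\ref{lem:2.2} applicable.

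The gap is in your promotion step. You propose to find, for a hypothetical annihilator $f\in\R_{G_{\beta,\T}}\setminus\{0\}$, some $t\in\R_\T\setminus\{0\}$ with $tf\in S_\theta$, i.e., $\theta^*(tf)\neq 0$. But $\theta^*$ is a ring map, so $\theta^*(tf)=\theta^*(t)\theta^*(f)$; if $f\in\Ker(\theta^*)$ then $tf\in\Ker(\theta^*)$ for every $t$, and no Noetherian induction on the stratification of $\frakN_\beta$ can repair this. The target $\G_0(\calI_{\frakN_\beta,\T})\otimes_{\R_\T}\K_\T$ genuinely has $\R_{G_{\beta,\T}}$-torsion supported on the non-zero strata, and you cannot rule out that $\iota(m)$ meets it using only $S_\theta$-torsion-freeness of that module.

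The paper closes this gap with one extra ingredient that you allude to but do not exploit: the Segal concentration theorem applied to $\Lambda_{\beta,\T}$ itself. Since the $\theta$-fixed locus of $N_\beta$ is $\{0\}$ by \eqref{large}, concentration gives an isomorphism
\[
\R_{G_{\beta,\T}}[S_\theta^{-1}]\;\xrightarrow{\ \sim\ }\;\G_0(\Lambda_{\beta,\T})[S_\theta^{-1}].
\]
Now the $S_\theta$-torsion-freeness you already proved says that the localization map $\G_0(\Lambda_{\beta,\T})\to\G_0(\Lambda_{\beta,\T})[S_\theta^{-1}]$ is injective. Composing, one obtains an $\R_{G_{\beta,\T}}$-linear embedding
\[
\G_0(\Lambda_{\beta,\T})\;\hookrightarrow\;\R_{G_{\beta,\T}}[S_\theta^{-1}]\;\subset\;\K_{G_{\beta,\T}},
\]
and the fraction field $\K_{G_{\beta,\T}}$ is visibly $\R_{G_{\beta,\T}}$-torsion-free. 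The point is that Segal concentration replaces the badly behaved target $\G_0(\calI_{\frakN_\beta,\T})\otimes\K_\T$ by the domain $\R_{G_{\beta,\T}}[S_\theta^{-1}]$ itself, where full torsion-freeness is automatic.
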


\begin{proof}
The Segal concentration theorem implies that whenever a connected linear complex algebraic group $G$ 
acts on a complex scheme $X$ of finite type, for any closed subtorus embedding $\theta:H\to G$ with comorphism
$\theta^*:\R_G\to\R_H$, the direct image by the inclusion of the $H$-fixed points locus $X^H\subset X$ yields group
isomorphisms 
$$\G_i([GX^H\,/\,G])[S_\theta^{-1}]\to\G_i([X\,/\,G])[S_\theta^{-1}]
\quad,\quad i\geqslant 0,$$
where
$S_\theta$ is the multiplicative subset of $\R_G$ given by $S_\theta=\R_G\setminus \Ker(\theta^*)$,
see, e.g., \cite[\S 4]{T88}.
By \eqref{large}, the fixed point locus of the $\theta$-action on $N_\beta$ is  $\{0\}$. 
Thus, the direct image by the inclusion
$BG_{\beta,\T}\to\Lambda_{\beta,\T}$ is an isomorphism
$$\R_{G_{\beta,\T}}[S^{-1}_\theta]=\G_0(\Lambda_{\beta,\T})[S^{-1}_\theta].$$
On the other hand, Lemma \ref{lem:red} implies that 
$$\G_0(\Lambda_{\beta,\T})\subset\G_0(\calI_{\frakN_\beta,\T})\otimes_{\R_\T}\K_\T.$$
Since $\G_0(\calI_{\frakN_\beta,\T})$ is torsion-free over $S_\theta$
by Lemma \ref{lem:2.2}, we deduce that
$\G_0(\Lambda_{\beta,\T})$ is also torsion-free over $S_\theta$.
We deduce that there is the following embedding of $\R_{G_{\beta,\T}}$-modules
$$\G_0(\Lambda_{\beta,\T})\subset\G_0(\Lambda_{\beta,\T})[S^{-1}_\theta]=
\R_{G_{\beta,\T}}[S^{-1}_\theta]\subset\K_{G_{\beta,\T}}.$$
Thus $\G_0(\Lambda_{\beta,\T})$ is torsion-free over $\R_{G_{\beta,\T}}$.
\end{proof}

\smallskip

For a future use, we consider a particular case where Condition \eqref{large} holds.
Let $\T=\bbC^\times\times\A$, with $\bbC^\times\subset\A$ and a weight function $h\mapsto q_h,t_h$.
Assume that the restriction of $t_h$ to $\bbC^\times$ is given by
$t_h=t^{n_h}$ for all $h\in \bar Q_1$,
where $t$ is the weight one character of $\bbC^\times$ and the integer $n_h$ is as in \eqref{large}.
Then, we may choose $\theta$ to be the cocharacter $v\mapsto (1,v).$ 
We may also make the following additional assumption which permits to use \eqref{large}
with $A=\{1\}.$
\begin{align}\label{COND}
\begin{split}
&\emph{Assume that there is a central cocharacter $z$ of $G_\beta$ such that $z(u)x_h=t_h(u)x_h$} \\
&\emph{for all $h\in \bar Q_1$, where $t_h$ is as above.}
\end{split}
\end{align}
Note that if $\bfQ$ is of Kac-Moody type then \eqref{COND} holds up to changing the orientation.
Under this condition, we have the following analogue of Proposition \ref{prop:TFT}.

\smallskip

\begin{corollary}\label{cor:TF}
The $\R_{G_{\beta,c}}$-module $\G_0(\Lambda_\beta)$ is torsion free
if Condition \eqref{COND} holds.
\end{corollary}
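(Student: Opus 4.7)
The plan is to deduce the corollary from Proposition \ref{prop:TFT} by introducing an auxiliary one-dimensional torus that realizes the cocharacter required by Condition \eqref{large}, and then to use \eqref{COND} to absorb this extra torus into the $G_\beta$-action, so that the enlarged torsion-freeness descends to the claimed one. First I would set $\T_0=\bbC^\times$ acting on $\bar X_\beta$ with weight $+1$ on each $x_h$ for $h\in Q_1$ and weight $-1$ on each $x_{h'}$, and form $\T=\bbC^\times\times\T_0$ with the obvious homogeneous weight function extending the given one on $\bar Q$. The cocharacter $v\mapsto(1,v)$ of $\T$ satisfies Condition \eqref{large}, so Proposition \ref{prop:TFT} applies and gives that $\G_0(\Lambda_{\beta,\T})$ is torsion-free over $\R_{G_{\beta,\T}}$.

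The crux is then to identify $\G_0(\Lambda_{\beta,\T})$ with $\G_0(\Lambda_\beta)\otimes_{\bbQ}\R_{\T_0}$. For this I would consider the group automorphism
\[
\Phi:G_{\beta,\T}\longrightarrow G_{\beta,\T},\qquad(g,u_1,u_0)\longmapsto(g\cdot z(u_0)^{-1},\,u_1,\,u_0),
\]
where $z$ is the central cocharacter of $G_\beta$ provided by \eqref{COND}. Centrality of $z$ makes $\Phi$ a group isomorphism, and a direct verification using $z(u_0)\cdot x_h=u_0\,x_h$ for $h\in Q_1$ and $z(u_0)\cdot x_{h'}=u_0^{-1}\,x_{h'}$ shows that $\Phi$ pulls the $G_{\beta,\T}$-action on $\bar X_\beta$ back to the trivial extension of the standard $G_{\beta,c}$-action, i.e.\ the $\T_0$-factor acts trivially after pullback. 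Since $N_\beta\subset M_\beta\subset\bar X_\beta$ is preserved by the $\T_0$-action (the moment map is $\T_0$-equivariant and nilpotence is scale-invariant), this descends to a stack isomorphism $\Lambda_{\beta,\T}\cong\Lambda_\beta\times B\T_0$ together with a compatible ring isomorphism $\R_{G_{\beta,\T}}\cong\R_{G_{\beta,c}}\otimes_\bbQ\R_{\T_0}$ induced by $\Phi^*$, under which $\G_0(\Lambda_{\beta,\T})\cong\G_0(\Lambda_\beta)\otimes_\bbQ\R_{\T_0}$.

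To conclude, I would descend along the flat extension $\R_{G_{\beta,c}}\hookrightarrow\R_{G_{\beta,c}}\otimes_\bbQ\R_{\T_0}$, which is still a domain: any annihilation relation $rm=0$ with $0\neq r\in\R_{G_{\beta,c}}$ and $m\in\G_0(\Lambda_\beta)$ yields $(r\otimes 1)(m\otimes 1)=0$ in $\G_0(\Lambda_{\beta,\T})$, and $r\otimes 1$ being a nonzerodivisor in $\R_{G_{\beta,\T}}$ forces $m\otimes 1=0$ by Proposition \ref{prop:TFT}, hence $m=0$. The main obstacle is essentially bookkeeping: one must verify carefully that $\Phi$ simultaneously respects the group law (which uses centrality of $z$) and trivializes the $\T_0$-action on every arrow of $\bar Q_1$, including the $Q'_1$-arrows whose $\T_0$-weight is $-1$. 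This is exactly where Condition \eqref{COND} is invoked in its full strength (i.e.\ for $h\in\bar Q_1$ rather than just $h\in Q_1$); once this compatibility is checked, the rest of the argument is formal.
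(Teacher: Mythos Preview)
Your proposal is correct and follows essentially the same route as the paper: introduce the auxiliary torus $\T_0=\bbC^\times$ so that Condition~\eqref{large} holds and Proposition~\ref{prop:TFT} applies, then use the group automorphism $(g,u,v)\mapsto(gz(v)^{-1},u,v)$ afforded by the central cocharacter $z$ of \eqref{COND} to identify $\G_0(\Lambda_{\beta,\T})$ with $\G_0(\Lambda_\beta)\otimes_\bbQ\R_{\T_0}$ compatibly with the ring extension $\R_{G_{\beta,c}}\hookrightarrow\R_{G_{\beta,\T}}$, from which torsion freeness descends. Your write-up is slightly more explicit than the paper's about why the automorphism trivializes the $\T_0$-action on all of $\bar Q_1$, but the argument is the same.
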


\begin{proof}
Let the cocharacter $z$ be as in \eqref{COND}.
Consider the group automorphism
\begin{align*}
&a:G_{\beta,\T}\to G_{\beta,\T}\quad,\quad (g,u,v)\mapsto (gz(v)^{-1},u,v).
\end{align*}
The element $a(g,u,v)$ acts on $\bar X_\beta$ as $(g,u,1)$ by \eqref{actionT}, \eqref{COND}.
The pullback by $a$ identifies the $\R_{G_{\beta,\T}}$-modules $\G_0(\Lambda_{\beta,\T})$ and 
$\G_0(\Lambda_\beta)[t,t^{-1}]$.
In particular, there are compatible inclusions
$$\R_{G_{\beta,c}}\subset\R_{G_{\beta,\T}}
\quad,\quad
\G_0(\Lambda_\beta)\subset\G_0(\Lambda_{\beta,\T}).$$
Hence the torsion freeness of $\G_0(\Lambda_\beta)$ over $\R_{G_{\beta,c}}$ follows from 
the torsion freeness of $\G_0(\Lambda_{\beta,\T})$ over $\R_{G_{\beta,\T}}$ which is proved in
Proposition \ref{prop:TFT}.\end{proof}

\smallskip

\subsubsection{Proof of Theorem $\ref{thm:A}$}\label{sec:thmA}

Let $Q$ be any Kac-Moody quiver with $Q_1=\bar Q_1^<$.
Recall that, in this case, condition \eqref{COND} holds.
Fix $\T=\bbC^\times$ and fix a normal weight function on $\bar Q$, i.e., 
condition \eqref{normal} holds.
In this section we'll prove the theorem under Condition \eqref{COND}.
The general case is proved in \S\ref{sec:GC}.

\medskip

\emph{Step $1$ :} We first prove the following.

\begin{lemma}\label{lem:phi}
There is an $\R$-algebra homomorphism
\begin{align}\label{phi}
\phi:\bfU^+\to\NKHA(\Pi_Q)
\quad,\quad
(\e_{i,n})^{(r)}\mapsto[\calO(n\omega_r)_{r\alpha_i}]
\quad,\quad i\in Q_0, n\in\bbZ, r\in\bbZ_{>0}.
\end{align}
\end{lemma}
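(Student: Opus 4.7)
My plan is to construct $\phi$ in two stages: first define an algebra homomorphism $\tilde\phi:\widetilde\bfU^+\to\NKHA(\Pi_Q)$ on the Drinfeld divided-power generators, then descend it to the separated-at-$1$ quotient $\bfU^+$. The key inputs are the shuffle-algebra embedding of $\NKHA(\Pi_Q)$, the shuffle presentation of Proposition \ref{prop:Shuffle}, and the torsion-freeness from Corollary \ref{cor:TF}.

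First I set $\tilde\phi((\e_{i,n})^{(r)}):=[\calO(n\omega_r)_{r\alpha_i}]$ on generators. Under Condition \eqref{COND} the change-of-variable trick from the proof of Corollary \ref{cor:TF} embeds $\NKHA(\Pi_Q)_\beta$ into $\NKHA(\Pi_Q)_{\beta,\T}$ for a larger torus $\T$ in which Condition \eqref{large} holds, and Lemma \ref{lem:KHAQT2}(a) supplies an injective $\R$-algebra homomorphism $\rho:\NKHA(\Pi_Q)_\T\hookrightarrow\SH_{\T,\K}^\diamond$. Composing yields an embedding $\bar\rho:\NKHA(\Pi_Q)\hookrightarrow\SH_{\T,\K}^\diamond$. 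Since $Q$ is of Kac-Moody type and hence has no loop at $i$, the dg-algebra $\bfA_{r\alpha_i}$ is exterior with zero differential, so $[\calO(n\omega_r)_{r\alpha_i}]$ is the K-class of the irreducible $\GL_r$-representation of highest weight $n\omega_r$ concentrated in cohomological degree $0$. Tracing through the construction of $\rho$ (via induction from the vertex subalgebras used in the proof of Proposition \ref{prop:generators}), this class maps to the Weyl character $(x_{i,1}\cdots x_{i,r})^n$, matching $\psi((\e_{i,n})^{(r)})$ from Proposition \ref{prop:Shuffle}.

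Next I compare the two shuffle kernels. The normality condition \eqref{normal} pins down the set $\{q_h : h:i\to j,\,h\in\bar Q_1\}$ in terms of the Cartan integer $a_{ij}$, and the homogeneity identity $q_hq_{h'}=q^2$ together with $t_h=1$ let the twist $\Omega$ be absorbed into the kernel so that $\zeta^\diamond_{\alpha,\gamma}$ becomes a scalar multiple of $\zeta^\bullet_{\alpha,\gamma}$. This identifies $\bar\rho\circ\tilde\phi$ with $\psi$ of Proposition \ref{prop:Shuffle} on generators; since $\psi$ respects the Drinfeld and Serre relations, so does $\bar\rho\circ\tilde\phi$, and by injectivity of $\bar\rho$ so does $\tilde\phi$. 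Hence $\tilde\phi$ extends to a well-defined $\R$-algebra homomorphism $\widetilde\bfU^+\to\NKHA(\Pi_Q)$. I anticipate the main obstacle is precisely this kernel comparison: a priori $\zeta^\diamond$ separately records each of the $-a_{ij}$ arrows between $i$ and $j$ in $\bar Q_1$ while $\zeta^\bullet$ packages them into a single factor, and reconciling them requires both the full force of normality and careful bookkeeping of the twist $\Omega$.

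Finally, $\NKHA(\Pi_Q)_\beta$ is a finitely generated free $\R$-module by Lemma \ref{lem:red}(b) together with the reduction to the projective Nakajima variety $\frakL_\beta$ used in its proof, so Krull's intersection theorem applied to the principal ideal $(q-1)\subset\R$ gives $\bigcap_n(q-1)^n\NKHA(\Pi_Q)_\beta=\{0\}$. Thus ${}^\infty\widetilde\bfU^+\subset\Ker\tilde\phi$, and $\tilde\phi$ descends to the required $\R$-algebra homomorphism $\phi:\bfU^+\to\NKHA(\Pi_Q)$.
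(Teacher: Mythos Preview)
Your overall strategy matches the paper's: embed $\NKHA(\Pi_Q)$ into a shuffle algebra, identify the images of the $[\calO(n\omega_r)_{r\alpha_i}]$ with $\psi((\e_{i,n})^{(r)})$ from Proposition~\ref{prop:Shuffle}, deduce the Drinfeld and Serre relations hold in $\NKHA(\Pi_Q)$ by injectivity of the embedding, and then descend to $\bfU^+$ using the $\R$-freeness of $\NKHA(\Pi_Q)$ from Lemma~\ref{lem:red}(b).

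The difference is in how you build the shuffle embedding. The paper does not route through the enlarged torus $\T$: under \eqref{COND} it proves directly (Lemma~\ref{lem:induction}) an $\R$-algebra embedding $\rho:\NKHA(\Pi_Q)\hookrightarrow\SH_\K^\diamond$, using Corollary~\ref{cor:TF} only to know that each $\G_0(\Lambda_\beta)$ is torsion-free over $\R_{G_{\beta,c}}$, which makes the localization map $\pi_\beta$ injective. Since $\T=\bbC^\times$ here, one has $t_h=1$ from the outset and the normality condition \eqref{normal} gives the kernel identity $\zeta^\diamond_{\alpha,\gamma}=\zeta^\bullet_{\alpha,\gamma}$ literally; this then shows $\psi$ factors through $\rho$. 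Your detour instead passes through $\NKHA(\Pi_Q)_\T$ and Lemma~\ref{lem:KHAQT2}(a), landing in $\SH_{\T,\K}^\diamond$. That route is workable, but it carries two extra obligations you have not discharged: first, the change-of-variable inclusion $\NKHA(\Pi_Q)\hookrightarrow\NKHA(\Pi_Q)_\T$ from the proof of Corollary~\ref{cor:TF} is only stated there as an inclusion of $\R_{G_{\beta,c}}$-modules, and to use it as an algebra map you must check that the central cocharacters $z$ are compatible across dimension vectors $\alpha+\gamma=\beta$; second, your kernel comparison invokes ``$t_h=1$'' while you are ostensibly in $\SH_{\T,\K}^\diamond$, which is incoherent unless you mean to specialize $t\mapsto 1$ afterwards. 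Both issues disappear if you follow the paper and stay at $\T=\bbC^\times$ throughout.

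One minor point: you claim $\NKHA(\Pi_Q)_\beta$ is \emph{finitely generated} free over $\R$ and invoke Krull. Lemma~\ref{lem:red}(b) gives freeness only, and the rank is in general infinite (it contains a free $\R_{G_\beta}$-module). But this is harmless: any free $\R$-module is already separated at $1$, so Krull is not needed.
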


\begin{proof}
To prove the lemma, it is enough to check that 
\begin{itemize}[leftmargin=8mm]
\item[$\mathrm{(a)}$]
$[\calO(n\omega_1)_{\alpha_i}]$ satisfies the Drinfeld and Serre relations in $\NKHA(\Pi_Q)$,
\item[$\mathrm{(b)}$]
$[\calO(n\omega_r)_{r\alpha_i}]=[\calO(n\omega_1)_{\alpha_i}]^{(r)}$ in $\NKHA(\Pi_Q)$,
\item[$\mathrm{(c)}$]
the map $\widetilde\bfU^+\to\NKHA(\Pi_Q)$ given by (a) and (b) factorizes through $\bfU^+$.
\end{itemize}
Part (c) follows from Lemma \ref{lem:red}(b).
Part (b) is a direct computation which is done in Lemma \ref{lem:A1} below.
Now we concentrate on Part (a).
A way to check it is suggested in \cite{Gr94}, without proof.
We'll give a different proof.
To do so, we consider the $\K$-algebra $\SH_\K^\diamond$.
It is the $\K$-vector space $\SH_\K$
equipped with the shuffle product $\diamond$ associated with the kernel \eqref{kernel}, with $\A=\{1\}$.
We'll prove the following.

\smallskip

\begin{lemma}\label{lem:induction}
There is an $\R$-algebra embedding 
$$\rho:\NKHA(\Pi_Q)\to \SH_\K^\diamond
\quad,\quad
[\calO(\lambda)_{r\alpha_i}]\mapsto[V(\lambda)]
\quad,\quad
\lambda\in \P_{r\alpha_i}, r\in\bbZ_{>0}.$$
\end{lemma}

\begin{proof}
We consider the following chain of $\R_{G_{\beta,c}}$-linear maps
\begin{align*}
\xymatrix{
\G_0(\bfA_\beta\rtimes G_{\beta,c})\ar[r]\ar@{.>}[rd]_-{\pi_\beta}&
\G_0(H^0(\bfA_\beta)\rtimes G_{\beta,c})\ar[r]&
\G_0(\SS(\bar X_\beta^*\langle 1\rangle)\rtimes G_{\beta,c})\ar[d]\\
&\K_{G_{\beta,c}}&
\ar[l]\G_0(\SS(\bar X_\beta^*\langle 1\rangle)\rtimes G_{\beta,c})\otimes_{\R_{G_{\beta,c}}}\K_{G_{\beta,c}}.
}
\end{align*}
The first map is taking the cohomology, the second one the pull-back by the surjective ring homomorphism
$\SS(\bar X_\beta^*\langle 1\rangle)\to H^0(\bfA_\beta)$, the third one is base change, and the last one
the inverse of the pull-back by the counit $\SS(\bar X_\beta^*\langle 1\rangle)\to\bbC$.
By the Segal concentration theorem, the following map is invertible
$$\pi_\beta\otimes_{\R_{G_{\beta,c}}}\K_{G_{\beta,c}}:
\G_0(\bfA_\beta\rtimes G_{\beta,c})\otimes_{\R_{G_{\beta,c}}}\K_{G_{\beta,c}}\to\K_{G_{\beta,c}}.
$$
Taking the sum over all dimension vectors $\beta$, we get an $\R$-linear map
$$\pi:\NKHA(\Pi_Q)\to \SH_\K.$$
It is injective by Corollary \ref{cor:TF} because Condition \eqref{COND} holds.
Recall that $[M]$ is the class in 
$\G_0(T^*\calX)_\Lambda$ of a dg-module $M$ in $D^\b(\bfA_\beta\rtimes G_{\beta,c})_\Lambda$. 
Fix $\alpha$, $\gamma$ with sum $\beta=\alpha+\gamma$. 
Let $a_i,$ $b_i$, $c_i$ be the $i$th entries of the dimension vectors $\alpha,$ $\beta$, $\gamma$.
We must prove that for each dg-modules $M\in D^\b(\bfA_\alpha\rtimes G_{\alpha,c})_\Lambda$ and
$N\in D^\b(\bfA_\gamma\rtimes G_{\gamma,c})_\Lambda$ we have
\begin{align}\label{shuffle}
\pi[M\circ N]=
\Sym_{x}\big(\zeta^\circ_{\alpha,\gamma}\otimes \pi[M]\otimes \pi[N]\big).
\end{align}
Let the groups $L,$ $P,$ $U$ and $G$ be as in \S\ref{sec:DHAPQ}.
The tensor product $M\otimes N$ is a dg-module over $\bfA_L\rtimes L_c$.
We have 
$$M\circ N=\ind_{P,\,!}^G(\bfA_P\otimes_{\,\bfA_L}\! (M\otimes N)).$$
As an $L_c$-equivariant bigraded module over the bigraded algebra $\bfA_L$, we have
$$\bfA_P=\SS((\bar X_U)^*\oplus\fraku^*[1]\langle 2\rangle)\otimes\bfA_L,$$
where the group $L_c$ acts diagonally.
Note that
$$[\SS((\bar X_U)^*)]\otimes[\SS((\bar X_U)^*[1])]=1.$$
Thus, the Weyl character formula yields the identity \eqref{shuffle} with
the kernel $\zeta^\circ_{\alpha,\gamma}$ equal to the character of the following class
\begin{align*}
[\SS((\bar X_{U})^*[1])]^{-1}
\otimes[\SS(\fraku^*[1]\langle 2\rangle)]
\otimes[\Omega]
\otimes[\SS(\fraku^*[1])]^{-1}.
\end{align*}
Further, we have the following identities in $R_{T_c}$
\begin{align*}
[\SS((\bar X_U)^*[1])]
&=\prod_{h\in \bar Q_1}\prod_{\substack{0<s\leqslant a_i\\b_j\geqslant t>a_j}}(1-x_{i,s}/q_hx_{j,t}),\\
[\SS(\fraku^*[1]\langle d\rangle)]&=
\prod_{i\in Q_0}\prod_{\substack{0<s\leqslant a_i\\b_i\geqslant t>a_i}}(1-x_{i,s}/q^dx_{i,t}),\\
[\Omega]&=\prod_{h\in \bar Q^<_1}\prod_{\substack{0<s\leqslant a_i\\b_j\geqslant t>a_j}}(-x_{i,s}/q_hx_{j,t}).
\end{align*}
We deduce that the map $\pi$ is an $\R$-algebra embedding from $\NKHA(\Pi_Q)$
to $\SH_\K$ equipped with the shuffle product with kernel
\begin{align}\label{kernel3}
\begin{split}
&
\prod_{h\in \bar Q_1^<}
\prod_{\substack{0<r\leqslant a_i\\b_j\geqslant s> a_j}}\frac{1}{1-q_hx_{j,s}/x_{i,r}}
\cdot
\prod_{h\in \bar Q_1^>}
\prod_{\substack{0<r\leqslant a_i\\b_j\geqslant s> a_j}}\frac{1}{1-x_{i,r}/q_hx_{j,s}}
\cdot
\prod_{i\in Q_0}\prod_{\substack{0<r\leqslant a_i\\b_i\geqslant s>a_i}}
\frac{q^{-2}x_{i,r}-x_{i,s}}{x_{i,r}-x_{i,s}}.
\end{split}
\end{align}
Let $\rho$ be the composition of $\pi$ and the
conjugation  by the rational function
\begin{align*}
\prod_{h\in \bar Q^>_1}\prod_{\substack{0<r\leqslant b_i\\0<s\leqslant b_j}}(1-x_{i,r}/q_hx_{j,s}).
\end{align*}
The morphism $\rho$ satisfies the conditions of the lemma.

\end{proof}

\smallskip

Now, assume that the weight function satisfies Condition \eqref{normal}.
Then, we have $\zeta^\diamond_{\alpha,\gamma}=\zeta^\bullet_{\alpha,\gamma}$.
Thus, we have defined an $\R$-algebra embedding
\begin{align}\label{rho}
\rho:\NKHA(\Pi_Q)\to \SH_\K^\bullet
\quad,\quad
[\calO(\lambda)_{r\alpha_i}]\mapsto [V(\lambda)]
\quad,\quad
\lambda\in \P_{r\alpha_i}, r\in\bbZ_{>0}.
\end{align}
On the other hand, by Proposition \ref{prop:Shuffle} there is an $\R$-algebra homomorphism
\begin{align}\label{psi}
\psi:\widetilde\bfU^+\to \SH_\K^\bullet
\quad,\quad
(\e_{i,n})^{(r)}\mapsto(x_{i,1}x_{i,2}\cdots x_{i,r})^n
\quad,\quad i\in Q_0, n\in\bbZ, r\in\bbZ_{>0}.
\end{align}
The map $\psi$ factors through the embedding $\rho$ in \eqref{rho},
yielding the map $\phi$ in \eqref{phi}.

\end{proof}

\medskip

\emph{Step $2$ :}
We prove that the map $\phi:\bfU^+\to\NKHA(\Pi_Q)$ is surjective.
By Proposition \ref{prop:generators}, it is enough to check the following statement.

\smallskip

\begin{lemma}\label{lem:A1}
Let $\bfQ$ be a quiver of type $A_1$.
\begin{itemize}[leftmargin=8mm]
\item[$\mathrm{(a)}$]
The map $\phi:\bfU^+\to\NKHA(\Pi_Q)$ is surjective.
\item[$\mathrm{(b)}$]
We have $[\calO(n\omega_r)_{r\alpha_i}]=[\calO(n\omega_1)_{\alpha_i}]^{(r)}$ in $\NKHA(\Pi_Q)$.
\end{itemize}
\end{lemma}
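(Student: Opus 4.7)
In type $A_1$ the quiver has one vertex $i$ and no arrows, so $\bar X_{r\alpha_i}=0$, the moment map is trivial, and $\Lambda_{r\alpha_i}=\calM_{r\alpha_i}=BGL_{r,c}$. The dg-algebra $\bfA_{r\alpha_i}=\SS(\frakg_{r\alpha_i}[1]\langle 2\rangle)$ has zero differential, so by \eqref{truncation}
$$\NKHA(\Pi_Q)_{r\alpha_i}=\G_0(BGL_{r,c})=\R_{GL_{r,c}},$$
with $[\calO(\lambda)_{r\alpha_i}]$ corresponding to the Weyl character $\chi_\lambda$. Under this identification the embedding $\rho$ from Lemma \ref{lem:induction} is the tautological inclusion $\R_{GL_{r,c}}\hookrightarrow\K_{GL_{r,c}}$, and the shuffle kernel of $\bullet$ degenerates to $\prod_{a<b}(q^{-2}x_a-x_b)/(x_a-x_b)$.

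For Part (b), since $\rho$ is an injective $\R$-algebra homomorphism and $\NKHA(\Pi_Q)$ is torsion-free over $\R$ by Lemma \ref{lem:red}(b), the identity is equivalent to the shuffle identity
$$\underbrace{x^n\bullet x^n\bullet\cdots\bullet x^n}_{r\text{ factors}}=[r]_q!\,(x_1\cdots x_r)^n$$
in $\SH_\K^\bullet$. An induction on $r$ using associativity of $\bullet$ reduces this to the elementary identity
$$\sum_{j=1}^r\prod_{a\neq j}\frac{q^{-2}x_a-x_j}{x_a-x_j}=[r]_q,\qquad [r]_q=1+q^{-2}+\cdots+q^{-2(r-1)},$$
whose left side is manifestly symmetric, has cancelling apparent poles at $x_a=x_j$, and hence is a constant pinned down by any convenient specialization.

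For Part (a), by Proposition \ref{prop:generators} it suffices to show $\R_{GL_{r,c}}\subset\Im(\phi)$ for each $r$. For each dominant weight of $GL_r$, written in multiset form $\lambda=(n_k^{a_k},\ldots,n_1^{a_1})$ with $n_1<\cdots<n_k$ and $\sum_j a_j=r$, set
$$u_\lambda=\phi\bigl((\e_{n_1})^{(a_1)}\cdots(\e_{n_k})^{(a_k)}\bigr).$$
By Part (b) and the shuffle formulas in Proposition \ref{prop:Shuffle},
$$\rho(u_\lambda)=(x_1\cdots x_{a_1})^{n_1}\bullet\cdots\bullet(x_1\cdots x_{a_k})^{n_k}.$$
At $q=1$ the kernel $\zeta^\bullet$ is $1$ and the shuffle product becomes the ordinary symmetrization of block monomials; since the $n_j$ are pairwise distinct this produces the monomial symmetric polynomial $m_\lambda$ exactly. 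For general $q$ a direct expansion gives $\rho(u_\lambda)=m_\lambda+\sum_{\mu\prec\lambda}c^q_{\lambda,\mu}\,m_\mu$ with $c^q_{\lambda,\mu}\in\R$, where $\prec$ denotes the dominance order on weights. Unitriangularity against the $\R$-basis $\{m_\lambda\}$ of $\R_{GL_{r,c}}$ then shows that $\{u_\lambda\}_\lambda$ is an $\R$-basis of $\NKHA(\Pi_Q)_{r\alpha_i}$, hence $\phi$ is surjective. The principal technical hurdle is precisely this unitriangularity, and in particular the assertion that the leading coefficient of $m_\lambda$ in $\rho(u_\lambda)$ equals $1$ exactly (not just a nonzero element of $\R$): one must check that the highest-dominance monomial $x_1^{\lambda_1}\cdots x_r^{\lambda_r}$ is produced by a unique shuffle at which the kernel $\zeta^\bullet$ evaluates to $1$, while every other shuffle, combined with the correction terms from the rational kernel, yields only monomials strictly below $\lambda$ in dominance.
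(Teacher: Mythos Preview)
Your approach is correct and essentially the same as the paper's: both parts are proved by transporting the question into the shuffle algebra via the injective map $\rho$ and checking that ordered products of divided powers $(\e_{i,n_1})^{(a_1)}\cdots(\e_{i,n_k})^{(a_k)}$ map to an $\R$-basis of $\R_{GL_{r,c}}$. The difference is that the paper short-circuits your ``principal technical hurdle'' by \emph{naming} the resulting polynomials. Using the \emph{decreasing} ordering $n>n-1>\cdots$ of the factors (opposite to yours), the paper observes that
\[
\psi\bigl((\e_n)^{(i_n)}(\e_{n-1})^{(i_{n-1})}\cdots\bigr)=\frac{1}{[i_1]_q!\cdots[i_n]_q!}\,\Sym_x\Bigl(x^\lambda\prod_{s<t}\frac{q^{-2}x_s-x_t}{x_s-x_t}\Bigr)
\]
is, up to a power of $q$, the Hall--Littlewood polynomial $P_\lambda(x;q^{-2})$. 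The unitriangularity $P_\lambda=m_\lambda+\sum_{\mu\prec\lambda}c_{\lambda\mu}m_\mu$ is then a classical fact (Macdonald, Ch.~III), so your leading-term analysis is already in the literature and need not be redone. Your increasing ordering also works (it yields the same family up to powers of $q$, as one sees by conjugating by the longest permutation), but identifying Hall--Littlewood polynomials makes the argument a one-liner. For part~(b) both proofs coincide: the paper notes it corresponds to the identity $P_{(n^r)}(x;q^{-2})=(x_1\cdots x_r)^n$, which is your inductive shuffle computation.
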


\begin{proof}
Set $Q_0=\{i\}$. 
Since the graded dg-algebra $\bfA_{r\alpha_i}$
is isomorphic to the exterior algebra of $\frakg_{r}^*\langle 2\rangle$ with the zero differential,
the $\calA$-module $\NKHA(\Pi_Q)_{r\alpha_i}$ is spanned by the classes $[\calO(\lambda)_{r\alpha_i}]$
as $\lambda$ runs over the set of all dominant weights of $G_{r\alpha_i}$. 
By \eqref{rho}, we deduce that 
$$\rho(\NKHA(\Pi_Q)_{r\alpha_i})=\R_{G_{r\alpha_i,c}}.$$
We'll abbreviate $x_n=x_{i,n}$ and $\e_n=\e_{i,n}$ for each integer $n$.
Set $\lambda=(\lambda_1\geqslant\lambda_2\geqslant\dots\geqslant \lambda_r)$ and
$x^\lambda=x_1^{\lambda_1}x_2^{\lambda_2}\cdots x_r^{\lambda_r}.$
Let $i_n$ be the multiplicity of $n$ in $\lambda$.
We have
\begin{align}\label{power}
\begin{split}
 \lambda&=(n^{i_n}(n-1)^{i_{n-1}}(n-2)^{i_{n-2}}\dots),\\
\psi\big((\e_n)^{(i_n)}(\e_{n-1})^{(i_{n-1})}\cdots\big)&=
\Sym_x\Big(x^\lambda\prod_{s<t}
\frac{q^{-2}x_s-x_t}{x_s-x_t}\Big)\,\Big/\,[i_1]_q!\cdots[i_n]_q!.
\end{split}
\end{align}
This is the Hall-Littlewood polynomial $P_\lambda(x\,;\,q^{-2})$ up to the product by a power of $q$.
We deduce that $\psi(\bfU^+)=\rho(\NKHA(\Pi_Q))$, proving (a).
The identity in Part (b) follows immediately from the definition of the multiplication in $\NKHA(\Pi_Q)$.
Taking the image by the map $\rho$, it corresponds to the fact that
$P_{(n^r)}(x\,;\,q^{-2})=(x_1\dots x_r)^n.$
\end{proof}

\medskip

\emph{Step $3$ :}
Let $Q$ be any quiver of finite or affine type but not of type $A_1^{(1)}$.
We'll prove that the $R$-algebra homomorphism $\phi:\bfU^+\to\NKHA(\Pi_Q)$ constructed in Step 1
is injective.
To do this, we consider the topological ring $\bbQ[\![\hbar]\!]$.
It is an $\R$-algebra under the assignment $q\mapsto\exp(\hbar)$.
Consider the following topological tensor products
$$\widehat\bfU^+=\bfU^+\,\widehat\otimes_\R\,\bbQ[\![\hbar]\!]=\varprojlim_n\bfU^+/(q-1)^n\bfU^+
\quad,\quad
\widehat{\NKHA}(\Pi_Q)=\NKHA(\Pi_Q)\,\widehat\otimes_\R\,\bbQ[\![\hbar]\!].$$
Since the $\R$-algebra
$\bfU^+$ is separated at 1 by definition, the obvious map yields an $\R$-algebra embedding
$\bfU^+\subset\widehat\bfU^+$.
Thus it is enough to check that the $\bbQ[\![\hbar]\!]$-algebra homomorphism
$$\widehat\phi:\widehat\bfU^+\to\widehat{\NKHA}(\Pi_Q)$$
given by $\phi$ under base change is injective. Set
$$ \bfU^+_1=\bfU^+/(q-1)\bfU^+
\quad,\quad
\NKHA(\Pi_Q)_1=\NKHA(\Pi_Q)\,/\,(q-1)\NKHA(\Pi_Q).$$
By base change, the map $\phi$ yields a $\bbQ$-algebra homomorphism
$$\phi_1:\bfU^+_1\to\NKHA(\Pi_Q)_1.$$
It is enough to prove that the map $\phi_1$ is injective.
Indeed, if $\phi_1$ is injective then
$$\Ker(\widehat\phi)\subset\hbar\widehat\bfU^+.$$
By Lemma \ref{lem:red}, the $\bbQ[\![\hbar]\!]$-module
$\widehat{\NKHA}(\Pi_Q)$ is torsion free. 
We deduce that
$$\Ker(\widehat\phi)\subset\hbar\Ker(\widehat\phi).$$
Hence, we have
$\Ker(\widehat\phi)\subset\bigcap_{n>0}\hbar^n\widehat\bfU^+$.
Since $\bfU^+$ is separated at 1, we deduce that
the map $\widehat\phi$ is injective.

\begin{lemma} The map $\phi_1$ is injective.
\end{lemma}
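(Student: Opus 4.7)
The plan is to reduce the injectivity of $\phi_1$ at $q=1$ to the homological analogue stated as Theorem \ref{thm:C}, which asserts that there is an algebra isomorphism $U(L\frakn_\bfQ) \to \H_*(\Lambda)$ whenever $Q$ is of finite or affine type different from $A^{(1)}_1$.

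First I would identify the classical limit $\bfU^+_1$ with $U(L\frakn_\bfQ)$. The Drinfeld relations at $q=1$ degenerate to the defining commutation relations of the loop algebra $L\frakn_\bfQ$, and the quantum Serre relations degenerate to the classical Serre relations of $\frakn_\bfQ$. Together with the PBW theorem for the loop algebra, and the fact that $\bfU^+$ is by construction separated at $1$, this yields an isomorphism $\bfU^+_1 \cong U(L\frakn_\bfQ)$.

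Next I would construct an algebra homomorphism $c : \NKHA(\Pi_Q)_1 \to \H_*(\Lambda)$ from the $q=1$ specialization of the nilpotent K-theoretic Hall algebra to the Borel-Moore homological Hall algebra of Schiffmann-Vasserot. This map should arise from the Chern character, or equivalently from passing to the associated graded with respect to Thomason's topological filtration: at $q=1$ the $\bbC^\times$-factor of $G_{\beta,c}$ acts trivially on $\Lambda_\beta$, so one is comparing the $G_\beta$-equivariant K-theory of $\Lambda_\beta$ with its $G_\beta$-equivariant Borel-Moore homology. The main technical point is to check multiplicativity of $c$: using base change and the shuffle presentations, one verifies that the kernel $\zeta^\circ_{\alpha,\gamma}$ appearing in Lemma \ref{lem:induction} specializes at $q=1$ to the kernel governing the homological shuffle algebra used in \cite{SV18}, up to the sign contributed by the twist $\Omega$.

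Finally I would check that $c \circ \phi_1$ agrees, under the identification $\bfU^+_1 \cong U(L\frakn_\bfQ)$, with the map $\phi^\H : U(L\frakn_\bfQ) \to \H_*(\Lambda)$ of Theorem \ref{thm:C}. By Proposition \ref{prop:generators} and Lemma \ref{lem:A1} it is enough to verify this on the generators $(\e_{i,n})^{(r)}$ supported in dimensions $r\alpha_i$: on the K-theoretic side they map to line bundle classes $[\calO(n\omega_r)_{r\alpha_i}]$, whose Chern character matches the corresponding generators of the homological side up to a standard normalization. Since $\phi^\H$ is injective (in fact bijective) by Theorem \ref{thm:C}, the composition $c \circ \phi_1$ is injective, and therefore so is $\phi_1$. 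The principal obstacle is constructing the comparison map $c$ and establishing its multiplicativity at the classical limit; once this is in place, the injectivity drops out of Theorem \ref{thm:C}.
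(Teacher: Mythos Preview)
Your strategy of factoring $\phi_1$ through a Chern-character comparison map $c:\NKHA(\Pi_Q)_1\to H_\bullet(\Lambda)$ and then invoking Theorem~\ref{thm:C} does not work as stated, because the two sides are not of the same ``size''. Under your identification $\bfU^+_1\cong U(L\frakn_Q)=U(\frakn_Q[s^{\pm1}])$ the generators $\e_{i,n}$ are indexed by $n\in\bbZ$, whereas the Yangian $\bfY^+$ in Theorem~\ref{thm:C} is generated by $\e_{i,r}$ with $r\in\bbN$: the homological Hall algebra is a deformation of $U(\frakn_Q[s])$, not of $U(\frakn_Q[s^{\pm1}])$. Concretely, the Chern character sends $[\calO(n\omega_1)_{\alpha_i}]$ to the exponential $e^{n\,\c_{1,G_{\alpha_i}}}\cup[\Lambda_{\alpha_i}]$, not to $(\c_{1,G_{\alpha_i}})^n\cup[\Lambda_{\alpha_i}]$; so there is no algebra map $c$ taking your K-theoretic generators to the homological generators, and the composite $c\circ\phi_1$ cannot be identified with the map $\phi$ of Theorem~\ref{thm:C}. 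Multiplicativity of a Riemann--Roch comparison for the Hall product is a second obstacle (Todd-class corrections for the proper pushforward $Rp_*$), but the indexing mismatch is already fatal.

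The paper's proof confronts exactly this discrepancy. It replaces a direct comparison by a dimension count: surjectivity of $\phi_1$ is already known, so it suffices to bound $\dim\gr_\alpha(\bfU^+_1)_\beta$ above by $\dim\gr_\alpha\G_0(\Lambda_\beta)_1$ for compatible filtrations. On the source side one uses the $J$-adic filtration associated with the ideal generated by $\e^i_\beta\otimes s^a(s-1)$, whose associated graded is $U(L^+\frakn_Q)=U(\frakn_Q[\sigma])$ via the exponential substitution $s\mapsto e^\sigma$; this is precisely the device that converts Laurent loops into polynomial loops and makes the graded dimensions comparable to the homological side. On the target side one uses the $I_{G_\beta}$-adic filtration together with the Riemann--Roch isomorphism \eqref{RR} and the Kac-polynomial computation of $\dim H_\bullet(\Lambda_{\beta,1})$ from \cite{SV18}. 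The Riemann--Roch map enters only as a linear (graded-dimension) isomorphism, never as a ring map, so the multiplicativity issue you flag does not arise.
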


\begin{proof}
The map $\phi_1:\bfU^+_1\to\G_0(\Lambda)_1$ is surjective by Step 2.
To prove the injectivity it is enough to prove the graded dimension of $\bfU^+_1$ is less than the
graded dimension of $\G_0(\Lambda)_1$,
for some filtrations compatible with the map $\phi_1$ to be defined.

The Lie algebra $\frakg_Q$ in \S\ref{sec:QLG} is isomorphic to the universal central extension of the current
algebra
$\frakg\otimes\bbQ[t^{\pm 1}]$ of a simple Lie algebra $\frakg$ over $\bbQ$, and
$\frakn_Q$ is the standard Borel subalgebra contained in $\frakg\otimes\bbQ[t]$.
Let $\Delta_Q\subset\bbN^{Q_0}$ be the set of roots of $\frakn_Q$.
Recall that 
$$\Delta_Q=(\Delta_++\bbN\delta)\sqcup(\Delta_-+\bbZ_{>0}\delta)\sqcup(\bbZ_{>0}\delta).$$

For any affine algebra $S$ the universal central extension of the current Lie algebra
$\frakg\otimes S$ is the sum of $\frakg\otimes S$ and its center, which is the quotient of the space of 
Kahler 1-forms of the ring $S$ by the exact forms. 
Let $(\bullet,\bullet)$ be a non zero invariant scalar product on $\frakg$.
The Lie bracket is such that
$[x\otimes f\,,\,y\otimes g]=[x,y]\otimes fg+(x,y)fdg$.

Set $S=\bbQ[s^{\pm 1}, t^{\pm 1}]$ and $\Sigma=\bbQ[\sigma, t^{\pm 1}]$.
Let $L\frakg_Q$ and $L^+\frakg_Q$ be the universal central extensions of 
$\frakg\otimes S$ and $\frakg\otimes\Sigma$.
The Lie algebra $L\frakg_Q$ is $\bbZ^{Q_0}\times\bbZ$-graded with the subspace
$\frakg_\beta\otimes s^at^b$ in degree $(\beta+b\delta,a)$, and the central elements
$\c_a=s^at^{-1}dt$ with $a\in\bbZ$ and $\c_{a,b}=s^{a-1}t^bds$ with $a,b\in\bbZ$ such that
$b\neq 0$ unless $a=0$ in degrees
$(0,2a)$ and $(b\delta,2a)$ respectively, see \cite{M90}.
A similar computation shows that
the Lie algebra $L^+\frakg_Q$ is $\bbZ^{Q_0}\times\bbN$-graded with the subspace
$\frakg_\beta\otimes \sigma^at^b$ in degree $(\beta+b\delta,a)$, and the central elements
$\c_\alpha=\sigma^\alpha t^{-1}dt$ with $\alpha\geqslant 0$ and
$\c_{\alpha,b}=\sigma^{\alpha-1} t^bd\sigma$ with $\alpha> 0$, $b\in \bbZ_{\neq 0}$  in degrees $(0,2\alpha)$ and
$(b\delta,2\alpha)$ respectively.
In both cases, 
the first component of the grading is called the horizontal grading, the second one the vertical grading.
Let $L\frakn_Q$ and $L^+\frakn_Q$
be the positive half of $L\frakg_Q$ and $L^+\frakg_Q$, i.e., the sum of all weight subspaces whose horizontal
degree belongs to $\Delta_Q$.
Let $\{\e_\beta^i\}$ be a basis of the root subspace $\frakg_\beta$ of $\frakg$.
A basis of $L\frakn_Q$ consists of elements
$\e_\beta^i\otimes s^a$ and $\c_{a,b}$ with $a\in\bbZ$, $b>0$.
A basis of $L^+\frakn_Q$ consists of elements
$\e_\beta^i\otimes \sigma^\alpha$ and $\c_{\alpha,b}$ with $\alpha, b>0$.
We deduce that
\begin{align}\label{form4}
\begin{split}
\sum_{\beta\in\Delta_Q}\sum_{\alpha\in\bbN}\dim(L^+\frakn_Q)_{\beta,\alpha}\,q^\alpha z^\beta&=
\sum_{\beta\in\Delta_+}\sum_{\alpha,b\in\bbN}q^{2\alpha}z^{\beta+b\delta}+
\sum_{\beta\in\Delta_-}\sum_{\alpha,b\in\bbN}q^{2\alpha}z^{\beta+(b+1)\delta}+\\
&\quad\sum_{\alpha,b\in\bbN}(q^2+\rk(\frakg))q^{2\alpha}z^{(b+1)\delta}.
\end{split}
\end{align}

Since $Q\neq A^{(1)}_1$,  the $\bbQ$-algebra $\bfU^+_1$ is 
isomorphic to the enveloping algebra of $L\frakn_Q$ by \cite[prop~1.5,\,1.6]{E03}.
The algebra embedding of $S$ into the topological ring $\widehat\Sigma=\bbQ[t^{\pm 1}][[\sigma]]$
such that $s,t^{\pm 1}\mapsto e^\sigma,t^{\pm 1}$ yields an embedding of $L^+\frakg_Q$
into the completed tensor product
$L^+\frakg_Q\widehat\otimes_\Sigma\widehat\Sigma$ which is compatible with the Lie brackets and the
horizontal degrees. Let $\gr_\bullet (\bfU^+_1)$ be the associated graded of the $J$-adic filtration
of $\bfU^+_1$ associated with the two-sided ideal $J$ generated by the elements
$\e_\beta^i\otimes s^a(s-1)$ and $\c_{a+1,b}-\c_{a,b}$ with $a\in\bbZ$ and $b>0$.
We deduce that there is a graded algebra isomorphism
$\gr_\bullet (\bfU^+_1)=U(L^+\frakn_Q)$ which takes
the subspace $\gr_\alpha (\bfU^+_1)_\beta$ isomorphically to $U(L^+\frakn_Q)_{\beta,2\alpha}$.
From the formula \eqref{form4}, we deduce that
\begin{align}\label{dimY}
\sum_{\beta\in\bbN^{Q_0}}\sum_{\alpha\in\bbN}\dim \gr_\alpha(\bfU^+_1)_\beta\,q^{2\alpha}z^\beta=
\text{Exp}\Big(\sum_{\beta\in\Delta_Q}\sum_{\alpha\in\bbN} p_\beta(q^2)q^{2\alpha}z^\beta\Big),
\end{align}
where Exp is the plethystic exponential attached to the formal variables $q$ and $z$,
and sum $p_\beta(q)\in\bbN[q]$ is the Kac polynomial associated with the
root $\beta$ of $\frakn_Q$.
We abbreviate $\Lambda_{\beta,1}=[N_\beta\,/\,G_\beta]$.
By \cite[thm~5.4]{SV18}, this implies that
\begin{align*}
\dim\gr_\alpha(\bfU^+_1)_\beta=\dim H_{2\dim\Lambda_{\beta,1}-2\alpha}(\Lambda_{\beta,1})
\quad,\quad\forall(\beta,\alpha)\in\bbN^{Q_0}\times\bbN.
\end{align*}

\smallskip

Let $X$ be an equidimensional  quasi-projective $G$-scheme.
Let $A_\alpha^G(X)_\bbQ$ be the rational equivariant Chow group of dimension $a$ as defined in \cite{EG98}.
Consider the quotient stack $\calX=[X\,/\,G]$, its dimension $n=\dim\calX$, and 
its rational Chow group $A_\alpha(\calX)_\bbQ=A^G_{\alpha+\dim G}(X)_\bbQ$. 
Let $I_G$ be the augmentation ideal of $\R_G$, i.e., the ideal of rank zero virtual representations.
Let $\widehat{\G_0(\calX)}$ be the $I_G$-adic completion of the $\R_G$-module $\G_0(\calX)$.
Edidin-Graham  \cite{EG00} defined a Riemann-Roch isomorphism
\begin{align}\label{RR}\tau_\calX:\widehat{\G_0(\calX)}\to\prod_{\alpha\in\bbN}A_{n-\alpha}(\calX)_\bbQ.\end{align}

\smallskip

We apply the Riemann-Roch isomorphism to the stack $\calX=\Lambda_{\beta,1}$.
Let 
$$\gr_\alpha\G_0(\Lambda_{\beta,1})=
(I_{G_\beta})^\alpha\G_0(\Lambda_{\beta,1})\,/\,(I_{G_\beta})^{\alpha+1}\G_0(\Lambda_{\beta,1}).$$
We get the following equality
\begin{align*}
\dim\gr_\alpha\G_0(\Lambda_{\beta,1})=
\dim A_{\dim\Lambda_{\beta,1}-\alpha}(\Lambda_{\beta,1}).
\end{align*}
We define
$$\G_0(\Lambda_\beta)_1=\G_0(\Lambda_\beta)\,/\,(q-1)
\quad,\quad
\gr_\alpha\G_0(\Lambda_\beta)_1=
(I_{G_{\beta}})^\alpha\G_0(\Lambda_\beta)_1\,/\,(I_{G_\beta})^{\alpha+1}\G_0(\Lambda_\beta)_1.$$
Note that $\G_0(\Lambda_\beta)$ is free as an $\R$-module by Lemma \ref{lem:red}, and
the specialization map $\G_0(\Lambda_\beta)_1\to\G_0(\Lambda_{\beta,1})$ 
is surjective by Proposition \ref{prop:genNKT} below.
Finally, we have $H_{2\alpha+1}(\Lambda_{\beta,1})=0$  and
the cycle map $A_{\alpha}(\Lambda_{\beta,1})_\bbQ\to H_{2\alpha}(\Lambda_{\beta,1})$ 
is surjective for each $\alpha$ by \cite[thm~A, prop~5.1]{SV18}.
We deduce that
$$\dim\gr_\alpha\G_0(\Lambda_\beta)_1\geqslant\dim\gr_\alpha\G_0(\Lambda_{\beta,1})\geqslant
\dim H_{2\dim\Lambda_{\beta,1}-2\alpha}(\Lambda_{\beta,1})=\dim\gr_\alpha(\bfU^+_1)_\beta.$$
This proves the lemma, as explained above.
\end{proof}

\smallskip

This finishes the proof of Theorem $\ref{thm:A}$.

\smallskip

\subsection{The case of a general quiver}\label{sec:GQ}
\subsubsection{A deformed version of Theorem $\ref{thm:A}$}\label{sec:GC}
Let $Q$ be any Kac-Moody quiver with $Q_1=\bar Q_1^<$ for some
total ordering of the vertices.
Fix $\T=\bbC^\times\times\A$ and fix a normal weight function on $\bar Q$, i.e., such that 
\eqref{normal} and \eqref{homogeneous} hold. We further assume that \eqref{large} holds.
This yields a KHA-realization of a multiparameter version of quantum groups which generalizes
Theorem $\ref{thm:A}$.
We equip the space $\SH_{\T,\K}$ in \eqref{SHT} with the $\K_\T$-algebra structure $\SH_{\T,\K}^\diamond$ 
whose multiplication $\diamond$ is the shuffle product with the kernel 
$\zeta^\diamond_{\alpha,\gamma}$ in \eqref{kernel}.
We have the following deformed version of Lemma \ref{lem:induction}.

\smallskip

\begin{lemma}\label{lem:inductionT}
Assume that Condition \eqref{large} holds.
There is an $\R_\T$-algebra embedding 
\begin{align}\label{rhoT}
\rho:\NKHA(\Pi_Q)_\T\to \SH_{\T,\K}^\diamond
\quad,\quad
[\calO(\lambda)_{r\alpha_i}]\mapsto[V(\lambda)]
\quad,\quad
\lambda\in \P_{r\alpha_i}, r\in\bbZ_{>0}.
\end{align}
\end{lemma}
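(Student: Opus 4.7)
The plan is to run the proof of Lemma \ref{lem:induction} in the $\T$-equivariant setting, using the torsion freeness of Proposition \ref{prop:TFT} under Condition \eqref{large} in place of that of Corollary \ref{cor:TF}. First, for each dimension vector $\beta$, I would consider the chain of $\R_{G_{\beta,\T}}$-linear maps
\[
\G_0(\bfA_\beta \rtimes G_{\beta,\T}) \to \G_0(H^0(\bfA_\beta) \rtimes G_{\beta,\T}) \to \G_0(\SS(\bar X_\beta^*\langle 1\rangle) \rtimes G_{\beta,\T}) \to \K_{G_{\beta,\T}},
\]
defined by taking cohomology, pulling back along the surjection $\SS(\bar X_\beta^*\langle 1\rangle) \twoheadrightarrow H^0(\bfA_\beta)$, base changing, and inverting the pullback by the counit $\SS(\bar X_\beta^*\langle 1\rangle) \to \bbC$. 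By the Segal concentration theorem, the composed map $\pi_\beta$ becomes an isomorphism after the base change $(\,-\,)\otimes_{\R_{G_{\beta,\T}}}\K_{G_{\beta,\T}}$. Summing over all $\beta$ would then produce an $\R_\T$-linear map $\pi : \NKHA(\Pi_Q)_\T \to \SH_{\T,\K}$, which is injective because Proposition \ref{prop:TFT} guarantees that $\G_0(\Lambda_{\beta,\T})$ is a torsion-free $\R_{G_{\beta,\T}}$-module and hence embeds into its localization.

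Next, I would compute multiplicativity and extract the shuffle kernel exactly as in Lemma \ref{lem:induction}. Starting from
\[
M \circ N = \ind_{P,\circ}^G\bigl(\bfA_P \otimes_{\bfA_L}(M\otimes N)\bigr)
\]
and the $L_c$-equivariant decomposition $\bfA_P = \SS((\bar X_U)^* \oplus \fraku^*[1]\langle 2\rangle) \otimes \bfA_L$, the Weyl character formula identifies $\pi(M \circ N)$ with a shuffle product whose kernel is the $\T$-character of
\[
[\SS((\bar X_U)^*[1])]^{-1} \otimes [\SS(\fraku^*[1]\langle 2\rangle)] \otimes [\Omega] \otimes [\SS(\fraku^*[1])]^{-1}.
\]
In the deformed setting each of these characters picks up extra factors of $t_h$ coming from the weight function, constrained by the homogeneity relations $q_h q_{h'} = q^2$ and $t_h t_{h'} = 1$ of \eqref{homogeneous}. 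A final conjugation by a suitable rational function, analogous to the one in Lemma \ref{lem:induction}, converts the raw kernel produced above into the prescribed $\zeta^\diamond_{\alpha,\gamma}$ of \eqref{kernel}; the resulting map $\rho$ is then the desired $\R_\T$-algebra embedding, sending $[\calO(\lambda)_{r\alpha_i}]$ to $[V(\lambda)]$ since the image is computed from the dg-module structure on $V(\lambda)$ exactly as in the undeformed case.

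The main obstacle is the bookkeeping of the $\T$-weights inside the shuffle kernel. Concretely, one must check that the $t_h$-contributions from $[\SS((\bar X_U)^*[1])]$, from the twist $[\Omega]$, and from the renormalizing conjugation combine to reproduce the asymmetric expression in \eqref{kernel}, which carries $t_{h'}^{-1}$ in the numerator and $t_h$ in the denominator; the homogeneity relation $t_h t_{h'} = 1$ is essential here. Once Proposition \ref{prop:TFT} is in hand, the remainder of the argument is a formal $\T$-equivariant deformation of the one used to establish Lemma \ref{lem:induction}.
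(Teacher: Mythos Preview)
Your proposal is correct and follows precisely the approach taken in the paper: the paper's proof simply says to rerun the argument of Lemma \ref{lem:induction} in the $\T$-equivariant setting, replacing the appeal to Corollary \ref{cor:TF} by Proposition \ref{prop:TFT} under Condition \eqref{large}. You have spelled out those steps in more detail than the paper does, but the strategy is identical.
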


\begin{proof}
Since the condition \eqref{large} holds, by Proposition \ref{prop:TFT}, the $\R_{G_{\beta,\T}}$-module 
$\NKHA(\Pi_Q)_{\beta,\T}$ is torsion free. 
Using the Proposition \ref{prop:TFT} instead of Corollary \ref{cor:TF} as in the proof of 
Lemma \ref{lem:induction}
yields the result.
\end{proof}

\smallskip

We now consider a particular setting. 
Choose a torus $\T$ satisfying \eqref{large} as follows :
 $\T=\bbC^\times\times\bbC^\times$ with
a normal weight function $h\mapsto q_h,t_h$ on the double quiver $\bar Q$ 
such that $t_h=t_{h'}^{-1}=t$ for all $h\in Q_1$,
where $t\in\X^*(\T)$ is second projection. 
Fix the twist $\Omega$ as in \S\ref{sec:triple}.
Then, we define a matrix $M=(m_{i,j}\,;\,i,j\in Q_0)$ such that we have
$$m_{i,j}=-m_{j,i}=1\ \text{if}\ i<j\ \text{and}\ a_{i,j}\neq 0\quad,\quad
m_{i,j}=0\ \text{else}.$$
Let $\bfU^+_{\T,\K}$ be the $\K_\T$-algebra generated by elements 
$\e_{i,s}$ with $i\in Q_0$ and $s\in\bbZ$
subject to the following defining relations :
\begin{itemize}[leftmargin=8mm]
\item[$\mathrm{(a)}$] if $a_{i,j}=0$, then we have the Drinfeld relation given by
$$[\,\e_i(z)\,,\,\e_j(w)\,]=0,$$
\item[$\mathrm{(b)}$] if $a_{i,j}\neq 0$, then we have the Drinfeld relation given by
$$(q^{a_{i,j}}z-t^{m_{i,j}}w)\,\e_i(z)\,\e_j(w)=(z-q^{a_{i,j}}t^{m_{i,j}}w)\,\e_j(w)\,\e_i(z),$$
\item[$\mathrm{(c)}$]  for $i\neq j$ and $l=1-a_{i,j}$ we have the Serre relation given by
$$\Sym_z\sum_{k=0}^l(-1)^k\left[\begin{matrix}l\cr k\end{matrix}\right]_q
\e_i(z_1)\dots\e_i(z_k)\,\e_j(w)\,\e_i(z_{k+1})\dots\e_i(z_l)=0.$$
\end{itemize}
We define $\widetilde\bfU^+_\T$ 
to be the $\R_\T$-subalgebra of $\bfU^+_{\T,\K}$ generated by the quantum divided powers
$(\e_{i,n})^{(r)}$ with $i\in Q_0$, $n\in\bbZ$ and $r\in\bbZ_{>0}$. 
The following lemma is proved in Appendix \ref{app:A}.

\smallskip

\begin{lemma}\label{lem:UST}
There is an $\R_\T$-algebra homomorphism
$$\psi:\widetilde\bfU^+_\T\to \SH^\diamond_{\T,\K}
\quad,\quad
(\e_{i,n})^{(r)}\mapsto(x_{i,1}x_{i,2}\cdots x_{i,r})^n
\quad,\quad
 i\in Q_0, n\in\bbZ, r\in\bbZ_{>0}.
$$
\qed
\end{lemma}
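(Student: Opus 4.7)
The plan is to verify directly that the assignment on generators is compatible with the three families of defining relations (a), (b), (c) of $\widetilde\bfU^+_\T$, and then to compute the image of the divided powers $(\e_{i,n})^{(r)}$ by iterating the shuffle product at a single vertex. This will simultaneously establish well-definedness and the explicit formula.

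First, I would compute the shuffle product $\psi(\e_i(z))\diamond\psi(\e_j(w))$ for $i,j\in Q_0$ directly from the kernel $\zeta^\diamond_{\alpha_i,\alpha_j}$ given by \eqref{kernel}. Using the normality condition \eqref{normal} we have $Q_1^<=\bar Q_1^<$ and the weight multiset $\{q_h\,;\,h:i\to j,\,h\in\bar Q_1\}$ equals $\{q^{2+a_{i,j}},q^{4+a_{i,j}},\dots,q^{-a_{i,j}}\}$. For $i<j$, only the $Q_1$-part contributes to $\zeta^\diamond_{\alpha_i,\alpha_j}$, and a telescoping of consecutive $q$-weights should reduce the rational kernel to a single factor $(q_{h'}x_{i,1}-t_{h'}^{-1}x_{j,1})/(x_{i,1}-q_ht_hx_{j,1})$ for a distinguished arrow $h:i\to j$ together with a regular polynomial remainder. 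Taking the two orderings and symmetrising via $\Sym_x$ then produces exactly the quadratic identity in (b), with the antisymmetric sign $m_{i,j}=\pm 1$ encoding whether $i<j$ or $i>j$. The case $a_{i,j}=0$ collapses to commutativity, i.e.~relation (a).

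Second, for the Serre relation (c), the plan is to restrict to the rank-two subalgebra generated by $\e_i$ and $\e_j$ and invoke the shuffle-algebra form of the quantum Serre identity established in \cite{E00} for finite type and in \cite{Gr07} for general Kac-Moody type. The $\T$-deformation only modifies the variable $w$ by the invertible monomial $t^{m_{i,j}}$, which can be absorbed by a substitution $w\mapsto t^{-m_{i,j}}w$ in the shuffle formula without altering the pole-and-zero structure responsible for the vanishing of the Serre series. This reduction to the undeformed case is the technical core of the argument; this step, which Appendix \ref{app:A} presumably carries out in detail, is the main obstacle, since one must track carefully that no spurious poles are introduced by the $t_h$-factors when the Serre antisymmetriser is applied.

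Third, the image of $(\e_{i,n})^{(r)}$ is computed by iterating the shuffle product at the single vertex $i$. Because $Q$ has no edge loops, $\zeta^\diamond_{\alpha_i,\alpha_i}$ reduces to the diagonal factor $(q^{-2}x_{i,1}-x_{i,2})/(x_{i,1}-x_{i,2})$, and the same computation as in \eqref{power} identifies $\psi((\e_{i,n})^{(r)})$ with the Hall--Littlewood polynomial $P_{(n^r)}(x\,;\,q^{-2})=(x_{i,1}\cdots x_{i,r})^n$ after division by $[r]_q!$. This matches the asserted formula and also shows automatically that $\psi$ takes values in the integral shuffle algebra, confirming that the target of $\psi$ is $\SH^\diamond_{\T,\K}$ rather than just a localisation.
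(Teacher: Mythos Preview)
Your overall plan is sound and aligns with the paper for relations (a), (b) and for the divided-power formula; the telescoping under normality indeed collapses the kernel to the single factor $\dfrac{q^{-a_{ij}}x_{i,r}-tx_{j,s}}{x_{i,r}-q^{-a_{ij}}tx_{j,s}}$ with no polynomial remainder (your phrase ``together with a regular polynomial remainder'' is unnecessary---the cancellation is exact, as one checks already for $a_{ij}=-2,-3$).

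The main departure is in the Serre relation (c). Your reduction via the substitution $y=t^{m_{i,j}}x_{j,1}$ to absorb the $t$-deformation is precisely what the paper does. However, rather than invoking \cite{E00} or \cite{Gr07} (which the paper groups together as covering only the finite-type case), Appendix~\ref{app:A} gives a self-contained computation: after reducing to the constant-mode identity $\sum_{h+k=l}(-1)^h\,\psi(\e_{i,0})^{\diamond(h)}\diamond\psi(\e_{j,0})\diamond\psi(\e_{i,0})^{\diamond(k)}=0$ via \cite{Gr94} and \cite[\S 10.4]{N00}, the paper introduces an auxiliary function $F(x_1,\dots,x_l,n)$, uses the $q$-Pascal identity and induction on $l$ to show $F$ is independent of the $x_r$, and then evaluates a residue sum to obtain the recursion $(1-q^{-2})F(l,n)=q^{-1}F(l-1,n-1)(q^n-q^{-n})(q^l-q^{-l})$, whence $F(l,0)=0$. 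Your citation-based route is viable provided those references genuinely treat arbitrary $a_{ij}\leqslant -1$; the paper's direct argument removes that dependence and is what makes the lemma available for general Kac--Moody $Q$.
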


The map $\psi$ factors through the embedding $\rho$ in \eqref{rhoT},
yielding an $\R_\T$-algebra homomorphism $\widetilde\bfU^+_\T\to\NKHA(\Pi_Q)_\T$.

\smallskip

\begin{remark}
We can specialize the ring $\R_\T$ to $\R$ by specializing the parameter $t$ to 1.
We get an $\R$-algebra homomorphism
$$\widetilde\bfU^+_\T\,/\,(t-1)\to\NKHA(\Pi_Q)_\T\,/\,(t-1).$$
Composing this map with the obvious $\R$-algebra homomorphisms
$$\widetilde\bfU^+\to\widetilde\bfU^+_\T\,/\,(t-1)\quad,\quad\NKHA(\Pi_Q)_\T\,/\,(t-1)\to\NKHA(\Pi_Q)$$
we get an $\R$-algebra homomorphism
\begin{align}\label{phitilde}
\widetilde\bfU^+\to\NKHA(\Pi_Q)\quad,\quad
(\e_{i,n})^{(r)}\mapsto[\calO(n\omega_r)_{r\alpha_i}]
\quad,\quad i\in Q_0, n\in\bbZ, r\in\bbZ_{>0}.\end{align}
The map \eqref{phitilde} factorizes through an $\R$-algebra homomorphism 
$\phi:\bfU^+\to\NKHA(\Pi_Q)$ by Lemma \ref{lem:red}(b).
The map $\phi$ is invertible according to Steps 2,3 in \S \ref{sec:thmA},
hence we recover Theorem $\ref{thm:A}$ from this deformed version.
\end{remark}

\smallskip

\subsubsection{The case of a general quiver}\label{sec:GQ}

Let us indicate briefly how to modify Theorem \ref{thm:A} 
and the arguments above for a general quiver $Q$, possibly with edge loops.
We consider a torus $\T=\bbC^\times\times\A$ with a weight function as in \S\ref{sec:deformed}. 
Fix a total ordering on the set $Q_0$ and assume that $\bar Q_1^<\subset Q_1$ and the twist is as in 
\S\ref{sec:triple}.
We equip the space $\SH_{\T,\K}$ with a $\K_\T$-algebra structure $\SH_{\T,\K}^\diamond$ whose multiplication
$\diamond$ is defined as in \eqref{rhoT}.
The proof of Lemma \ref{lem:inductionT} extends to the case of the quivers with edge loops and
Propositions  \ref{prop:generators} and \ref{prop:TFT} yield the following

\begin{corollary}\label{cor:loops}\hfill
\begin{itemize}[leftmargin=8mm]

\item[$\mathrm{(a)}$]
The $\R_{G_{\beta,\T}}$-module $\NKHA(\Pi_Q)_{\beta,\T}$ is 
torsion free if \eqref{large} holds.

\item[$\mathrm{(b)}$]
There is an $\R_\T$-algebra embedding 
$\rho:\NKHA(\Pi_Q)_\T\to \SH_{\T,\K}^\diamond$ if \eqref{large} holds.

\item[$\mathrm{(c)}$]
The $\R_\T$-algebra $\NKHA(\Pi_Q)_\T$ is generated by the subset $\NKHA(\Pi_Q)_{\delta,\T}$.
\end{itemize}
\qed
\end{corollary}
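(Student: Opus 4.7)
The plan is to check that each of the three claims reduces, essentially verbatim, to a result already established, and that the Kac-Moody hypothesis was never genuinely used in those arguments.

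First, part (c) is a direct restatement of Proposition \ref{prop:generators}. A careful reading of that proof shows it never invokes the Kac-Moody hypothesis: all the geometric ingredients (the stratification $\bar\calX_{\beta,\T}^{\geqslant l\alpha_i}$, the vector bundle stack $(q')_\flat$ from \cite[lem.~5.7]{SV18}, the Mayer--Vietoris sequence, and the descending induction) make sense and remain valid for an arbitrary quiver, possibly with edge loops at a vertex $i$; the definition of $\varepsilon_i(x)$ already accounts for loops by only considering arrows with target $\neq i$ in $x_i^{out}$, while simultaneously requiring invariance under the loop maps. Thus (c) is immediate.

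For part (a), we reread the proof of Proposition \ref{prop:TFT}. Its input is Lemma \ref{lem:2.2} together with the Segal concentration theorem applied to the cocharacter $\theta$ satisfying \eqref{large}. The concentration argument is formal in $G_{\beta,\T}$-equivariant K-theory and is insensitive to the quiver. Lemma \ref{lem:2.2} in turn is built from the dimensional reduction isomorphism $\G_0(\calI_{\beta,\T})=\G_0(\calM_{\beta,\T})$ from Lemma \ref{lem:red}(a), which is proved stackwise via \eqref{dimred0} and works for any $Q$, and from the filtration argument of Lemma \ref{lem:FILT} applied to the stratification of $\calI_{\frakN_\beta,\T}$ by the orbits $O\subset\frakN_\beta$. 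The key point is the vector-bundle-stack description of $\calI_{O,\T}\to\calX_{O,\T}$ using the $\omega$-filtration; this is a statement about the module category of $\bbC Q$, which is quasi-hereditary regardless of whether $Q$ has edge loops. The Nakajima quiver variety input used in Lemma \ref{lem:red}(b) was already noted in the paper to extend to arbitrary $Q$ via \cite[prop.~4.3]{SV18}. Thus the whole chain goes through.

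For part (b), the construction of $\rho$ proceeds exactly as in Lemmas \ref{lem:induction} and \ref{lem:inductionT}: one defines
\[
\pi_\beta:\G_0(\bfA_\beta\rtimes G_{\beta,\T})\to \K_{G_{\beta,\T}}
\]
by taking cohomology, pulling back to $\SS(\bar X_\beta^*\langle 1\rangle)$, and localising along the counit, and then invokes the Segal concentration theorem to show that $\pi_\beta\otimes_{\R_{G_{\beta,\T}}}\K_{G_{\beta,\T}}$ is an isomorphism. The torsion-freeness from (a) ensures that the resulting $\R_\T$-linear map $\pi:\NKHA(\Pi_Q)_\T\to\SH_{\T,\K}$ is injective. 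The Weyl-character-formula computation of the shuffle kernel is local on pairs of arrows and proceeds identically in the presence of loops, since each arrow $h\in\bar Q_1$ contributes independently to $[\SS((\bar X_U)^*[1])]$, $[\SS(\fraku^*[1]\langle 2\rangle)]$ and $[\Omega]$; conjugating by the same rational function as in the proof of Lemma \ref{lem:inductionT} gives exactly the kernel $\zeta^\diamond_{\alpha,\gamma}$ of \eqref{kernel}, yielding the desired algebra embedding $\rho$.

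The main subtlety, and the only place where care is genuinely needed, is the bookkeeping in part (b): with edge loops present, one must verify that the auxiliary factors coming from the loops in the twist $\Omega$ and in the character of $[\SS(\fraku^*[1]\langle 2\rangle)]$ combine in the shuffle kernel into precisely the symmetric factor $\prod_i\prod_{r,s}(q^{-2}x_{i,r}-x_{i,s})/(x_{i,r}-x_{i,s})$ appearing in \eqref{kernel}, with no spurious contribution from loops at a single vertex. This is a direct computation, but it is the one place where edge-loop quivers behave differently from Kac--Moody quivers and must be checked line by line.
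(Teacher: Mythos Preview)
Your proposal is correct and takes essentially the same approach as the paper, which simply states that Propositions \ref{prop:generators} and \ref{prop:TFT} together with the extension of Lemma \ref{lem:inductionT} to quivers with edge loops yield the corollary. One small redundancy: Propositions \ref{prop:generators} and \ref{prop:TFT} are already stated and proved in the paper for an arbitrary quiver (the section begins ``Let $Q$ be any quiver''), so your careful rereading for (a) and (c) is not strictly necessary; only part (b) genuinely requires the extension argument you sketch, and your identification of the twist renormalization for edge loops as the one nontrivial bookkeeping point matches the paper's remark in \S\ref{sec:triple}.
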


Hopefully, part (b)  may permit to compute a presentation of the $\R_\T$-algebra $\NKHA(\Pi_Q)_\T$.
To recover a presentation of the algebras $\NKHA(\Pi_Q)$ or $\NKHA(\Pi_Q)_1$ from a presentation of
$\NKHA(\Pi_Q)_\T$, we need the following result.
Assume that $\T=\bbC^\times\times\bbC^\times$ with a weight function such that $t_h=t_{h'}^{-1}=t$ for all $h\in Q_1$ and   \eqref{large} holds.

\smallskip

\begin{proposition}\label{prop:genNKT}
The following specialization maps are surjective
$$\NKHA(\Pi_Q)_\T\,/\,(t-1)\to\NKHA(\Pi_Q)\to\NKHA(\Pi_Q)_1.$$
\end{proposition}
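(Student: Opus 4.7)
The plan is to reduce, by graded linearity, to showing surjectivity in each dimension vector $\beta\in\bbN^{Q_0}$, and then to leverage the direct-summand description in the proof of Lemma \ref{lem:red}(b) together with the structure of the equivariant K-theory of Nakajima quiver varieties to conclude.

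Both specialization maps are graded $\R_\T$-linear (resp.\ $\R$-linear), and one only needs to check surjectivity of
$$\G_0(\Lambda_{\beta,\T})/(t-1)\twoheadrightarrow\G_0(\Lambda_\beta)\quad\text{and}\quad\G_0(\Lambda_\beta)/(q-1)\twoheadrightarrow\G_0(\Lambda_{\beta,1})$$
for each $\beta$; after this reduction, the generation statements of Proposition \ref{prop:generators} and Corollary \ref{cor:loops}(c) are not really needed, although they provide an alternative pathway via the $\delta$-pieces.

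The proof of Lemma \ref{lem:red}(b) constructs maps $a, b, c, d, e$ whose composition $e\circ d\circ c\circ b\circ a$ is the identity of $\G_0(\Lambda_{\beta,\T})$, thereby realizing $\G_0(\Lambda_{\beta,\T})$ as an $\R_\T$-module direct summand of $\G_0([\frakL_\beta/F_{\beta,\T}])$, where $\frakL_\beta$ is the projective subvariety of the Nakajima quiver variety attached to $(\beta,\beta)$. The same argument with $\T$ replaced by $\bbC^\times$ (resp.\ by the trivial group) yields analogous direct-summand embeddings for $\G_0(\Lambda_\beta)$ and $\G_0(\Lambda_{\beta,1})$. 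Since the maps $a,b,c,d,e$ are all built from natural operations (open restrictions, pullbacks along vector bundle stacks, isomorphisms induced by stack isomorphisms), they commute with the forget-equivariance functors, so the three direct-summand embeddings fit into commutative squares with the two specialization maps.

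It therefore suffices to check surjectivity on the ambient group $\G_0([\frakL_\beta/F_{\beta,\T}])$. By \cite[thm~7.3.5]{N00}, extended to general $Q$ in \cite[prop.~4.3]{SV18}, this Grothendieck group is a free $\R_\T$-module generated by classes of tautological bundles on $\frakL_\beta$, each of which is manifestly $\T$-equivariant. Hence the specialization $\G_0([\frakL_\beta/F_{\beta,\T}])/(t-1)\to\G_0([\frakL_\beta/F_{\beta,c}])$ sends generators to generators and is surjective (in fact an isomorphism on free modules of the same rank), and likewise for $q\to 1$. By the direct-summand compatibility above, surjectivity descends to $\G_0(\Lambda_{\beta,\T})/(t-1)\to\G_0(\Lambda_\beta)$ and $\G_0(\Lambda_\beta)/(q-1)\to\G_0(\Lambda_{\beta,1})$. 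The main subtle point is verifying that the open restriction maps $c$ and $d$ in the diagram of Lemma \ref{lem:red}(b) are genuinely compatible with the forget-$\A$ and forget-$\bbC^\times$ functors; this is straightforward because both restriction and forget-equivariance are the pullbacks along canonical maps of stacks, and these pullbacks commute by base change.
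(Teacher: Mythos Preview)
Your argument is correct and follows a genuinely different route from the paper's own proof. The paper first reduces, via Proposition~\ref{prop:generators}, to the case of a single-vertex quiver with $g$ edge loops; it then stratifies $N_\beta$ by locally closed pieces $N_\lambda^\circ$ indexed by partitions (if $g=1$) or compositions (if $g>1$), uses the localization exact sequence and the four lemma to reduce to surjectivity on each stratum, and finally shows that each $\G_0(\Lambda^\circ_{\lambda,\T})$ is cyclic over $\R_{G_{\beta,\T}}$ (generated by the fundamental class) via the conormal-bundle description of the strata (for $g=1$) or K-theoretic Kirwan surjectivity for Nakajima varieties (for $g>1$). Your approach instead recycles the retraction diagram from the proof of Lemma~\ref{lem:red}(b) directly for each $\beta$: since the section $cba$ and the retraction $ed$ are assembled from pullbacks along maps of quotient stacks that exist at every level of equivariance, they intertwine the forget-equivariance maps, and surjectivity then reduces to the ambient group $\G_0([\frakL_\beta/F_{\beta,\T}])$, where it follows from the freeness theorem for the equivariant K-theory of the core.

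One small correction: the basis produced by \cite[thm~7.3.5]{N00} (and its extension in \cite[prop.~4.3]{SV18}) is not literally given by tautological bundles but by geometric classes attached to a torus-equivariant cell or fixed-point structure. This does not affect your argument, since those classes are equivariant for the full torus and map to the corresponding classes under forget-equivariance; working rationally, passage to Weyl invariants also preserves surjectivity. Your proof is more economical and avoids the case-by-case stratification analysis; the paper's route, on the other hand, yields the finer information that each stratum $\G_0(\Lambda^\circ_{\lambda,\T})$ is $\R_{G_{\beta,\T}}$-cyclic, which may be of independent use.
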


\begin{proof}
We'll prove that the first map is surjective.
The surjectivity of the second one is proved in a similar way.
It is enough to prove that the specialization map
$$\NKHA(\Pi_Q)_{\bullet\alpha_i,\T}\,/\,(t-1)\to\NKHA(\Pi_Q)_{\bullet\alpha_i}
\quad,\quad i\in Q_0,$$ 
is surjective. 
We may assume that $Q_0=\{i\}$ and $Q_1=\{h_1,h_2,\dots,h_g\}$.
If $g=0$, then the claim follows from the proof of Lemma \ref{lem:A1}.
We'll assume that $g>0$.
Set $\beta=r\alpha_i$. We have $G_\beta=GL_r$.
Let $\calP$ be the set of all partitions of $r$ if $g=1$, and the set of all compositions of $r$ if $g>1$.
For each $\lambda\in\calP$,
let $P_\lambda$ be the standard parabolic subgroup of $G_\beta$ of block-type $\lambda$, and
$F_\lambda$ the partial flag of vector subspaces in $\bbC^r$ stabilized by $P_\lambda$.
Set
$$
N_\lambda=G_\beta(\bar X(F_\lambda)\cap M_\beta).$$ 
By \cite[\S3.5]{SV18}, there is a $G_{\beta,\T}$-equivariant dense open subset 
$N^\circ_\lambda$ of $N_\lambda$ such that
\begin{itemize}[leftmargin=8mm]
\item $N_\beta=\bigsqcup_{\lambda\in \calP}N_\lambda^\circ$
is a partition into locally closed subsets,
\item $N_\lambda\subseteq\bigcup_{\mu\leqslant\lambda}N_\mu^\circ$ 
where $\leqslant$ is the dominance order.
\end{itemize}
Consider the substack $\Lambda^\circ_{\lambda,\T}=[N^\circ_\lambda\,/\,G_{\beta,\T}]$ of $\Lambda_{\beta,\T}$. 
Set $\Lambda_{\leqslant \lambda,\T}=\bigcup_{\mu\leqslant\lambda}\Lambda^\circ_{\mu,\T}$.
The localization exact sequence yields an exact sequence
$$\xymatrix{\G_0(\Lambda_{<\lambda,\T})\ar[r]&\G_0(\Lambda_{\leqslant\lambda,\T})\ar[r]^-{res}
&\G_0(\Lambda_{\lambda,\T}^\circ)\ar[r]& 0.}$$
By the four lemma, to prove the proposition it is enough to check that the specialization map
$\G_0(\Lambda_{\lambda,\T}^\circ)\to\G_0(\Lambda_{\lambda}^\circ)$
is surjective.
This follows from the following lemma.

\smallskip

\begin{lemma}
$\G_0(\Lambda_{\lambda,\T}^\circ)$ is generated by the fundamental class
$[\calO_{\Lambda_{\lambda,\T}^\circ}]$ as an $\R_{G,\T}$-module.
\end{lemma}

\smallskip

\begin{proof}
First, assume that $g=1$.
We have $X_\beta=\frakg^*_\beta$ and $X'_\beta=\frakg_\beta$.
Further, the subset
$N^\circ_\lambda\subset\frakg_\beta\times\frakg^*_\beta$
is the conormal bundle to the Richardson orbit 
$O_\lambda$ in $\frakg_\beta$
associated with the parabolic subgroup $P_\lambda$, i.e., the nilpotent $G_\beta$-orbit 
with Jordan type dual to the partition $\lambda$.
See \cite[\S 3.6.1]{SV18} for details.
Let $\fraku_\lambda$ be the Lie algebra of the unipotent radical of $P_\lambda$.
The open restriction and the Thom isomorphism yield a surjective map
$$\R_{P_\lambda\times T}\otimes[\calO_{\fraku_\lambda}]=\G_0([\fraku_\lambda\,/\,P_\lambda\times T])
\twoheadrightarrow\G_0([\fraku_\lambda\cap O_\lambda\,/\,P_\lambda\times T])=
\G_0(\Lambda^\circ_{\lambda,\T}).$$
This map takes the class $[\calO_{\fraku_\lambda}]$ to $[\calO_{\Lambda^\circ_{\lambda,\T}}]$,
proving the claim.

\smallskip

Next, assume that $g>1$. 
Then the claim follows from the proof of \cite[lem~5.13]{SV18}.
Indeed, the variety $N^\circ_\lambda$ is isomorphic to
an open subset of a vector bundle over the fixed points locus
of a cocharacter acting on a Nakajima quiver variety. See \cite[\S 4.1.2]{SV18} for details.
Then, the K-theoretic analogue of the Kirwan surjectivity 
statement proved in \cite[\S A.4]{SV18} implies the lemma.
\end{proof}

\end{proof}

\medskip

\section{K-theoretic Hall algebras and super quantum groups}\label{sec:super}

Let $\T=\bbC^\times\times\bbC^\times$ and  $q, t\in\X^*(\T)$ be the characters of weight $(1,0)$, $(0,1)$. 

\subsection{Super quantum groups}

Let $m,n\in\bbN$ with $(n,m)\neq(0,0)$ and $mn\neq 1,2$.
A \emph{parity sequence} of type $(m,n)$
is a tuple $\bfs=(s_1,s_2,\dots,s_{m+n})$ of $\pm 1$ with $m$ entries equal to 1.
Set $Q_0=\bbZ\,/\,(m+n)\bbZ$.
Let $\{\alpha_i\,;\,i\in Q_0\}$ be the basis of Dirac functions of the lattice $\bbZ^{Q_0}$.
Consider the bilinear form $(\,\text{-}:\text{-}\,)$ on $\bbZ^{Q_0}$ such that 
$$
(\alpha_i\,:\,\alpha_j)=(s_i+s_{i+1})\delta_{i,j}-s_i\delta_{i,j+1}-s_j\delta_{i+1,j}.
$$
We define matrices $A=(a_{i,j}\,;\,i,j\in Q_0)$ and $M=(m_{i,j}\,;\,i,j\in Q_0)$ by
\begin{align*}
a_{i,j}&=(\alpha_i\,:\,\alpha_j),\\
m_{i,i+1}&=-m_{i+1,i}=-s_{i+1},\\
m_{i,j}&=0\ \text{if}\ i-j\neq\pm 1.
\end{align*}
A vertex $i\in Q_0$ is odd if and only if $s_i=-s_{i+1}$, i.e., 
if and only if $a_{i,i}=0$. 
Else, the vertex $i$ is even and $a_{i,i}=\pm 2$.
We write $|i|=0$ if $i$ is even and $|i|=1$ if $i$ is odd.
Let $Q_0=Q_0^{ev}\sqcup Q_0^{odd}$ be the partition into even and odd vertices.

\smallskip

The positive part of the
\emph{quantum toroidal algebra} of $\frakg\frakl(m|n)$ with parity sequence $\bfs$ is the 
$\bbZ/2\bbZ\times\bbZ^{Q_0}$-graded $\K_\T$-algebra $\pmb\calU^+_\K$ generated by
$\e_{i,s}$ with $i\in Q_0$ and $s\in\bbZ$, 
subject to the defining relations $\mathrm{(a)}$ to $\mathrm{(d)}$ below :
\begin{itemize}[leftmargin=8mm]

\item[$\mathrm{(a)}$] if $a_{i,j}\neq 0$, then we have the Drinfeld relation
$$(q^{a_{i,j}}z-t^{m_{i,j}}w)\,\e_i(z)\,\e_j(w)=(-1)^{|i|\, |j|}(z-q^{a_{i,j}}t^{m_{i,j}}w)\,\e_j(w)\,\e_i(z),$$

\item[$\mathrm{(b)}$] if $a_{i,j}=0$, then we have the Drinfeld relation
$$[\,\e_i(z)\,,\,\e_j(w)\,]=0,$$

\item[$\mathrm{(c)}$]  if $i$ even and $j=i-1$ or $i+1$, then we have the cubic Serre relation
$$\Sym_z[\![\,\e_i(z_1)\,,\,[\![\,\e_i(z_2)\,,\,\e_j(w)\,]\!]]\!]=0,$$

\item[$\mathrm{(d)}$]  if $i$ odd, $i-1$, $i+1$ even, then we have the quartic Serre relation
$$\Sym_z[\![\,\e_i(z_1)\,,\,[\![\,\e_{i+1}(u)\,,\,[\![\e_i(z_2)\,,\,\e_{i-1}(v)\,]\!]]\!]]\!]=0.$$

\end{itemize}
See \cite{BM19} for more details. 
The \emph{parity} and the \emph{weight} of the generators are 
$$|\e_{i,s}|=|i|\in\bbZ/2\bbZ
\quad,\quad
\wt(\e_{i,s})=\alpha_i\in\bbZ^{Q_0}.$$
We used the generating series
$\e_i(z)=\sum_{r\in\bbZ}\e_{i,r}z^{-r}$ and the following commutators
\begin{align}\label{supercom}
[\![\,a\,,\,b\,]\!]=ab-(-1)^{|a|\,|b|}q^{(\wt(a)\,:\,\wt(b))}ba
\quad,\quad
[\,a\,,\,b\,]=ab-(-1)^{|a|\,|b|}ba.
\end{align}
We define $\pmb\calU^+_\T$ to be the $\R_\T$-subalgebra of $\pmb\calU^+_\K$
generated by the quantum divided powers
$(\e_{i,r})^{(l)}$ with $i\in Q_0$, $r\in\bbZ$ and $l\in\bbZ_{>0}$. 

\smallskip

\begin{remark}
Relations (c), (d) also hold for all other parities, but, then,  they follow from the relations above.
The quartic Serre relation is equivalent to
$$\Sym_z[\![[\![\,\e_i(z_1)\,,\,\e_{i+1}(u)]\!]\,,\,[\![\e_i(z_2)\,,\,\e_{i-1}(v)\,]\!]]\!]=0.$$
\end{remark}

\smallskip

\subsection{K-theoretic Hall algebras and super quantum groups}\label{sec:super2}
Let $Q$ be the quiver with set of vertices $Q_0$ and set of arrows
$Q_1=Q_1^+\sqcup Q_1^0\sqcup Q_1^-$ with
$$Q_1^+=\{x_i:i\to i+1\,;\,i\in Q_0\}
\quad,\quad
Q_1^-=\{y_i:i+1\to i\,;\,i\in Q_0\}
\quad,\quad
Q_1^0=\{\omega_i:i\to i\,;\,i\in Q_0^{ev}\}
.$$
Fix a total ordering of the vertices of $Q$.
We consider the weight function in $\X^*(\T)$ such that 
$$
q_h=q_{h'}=q^{-a_{i,j}}
\quad,\quad
t_h=t_{h'}^{-1}=t^{m_{i,j}}
\quad,\quad
\forall i\leqslant j.
$$
We define the potential 
\begin{align*}
W&=\sum_{i\in Q_0^{ev}} (\omega_iy_ix_i-\omega_ix_{i-1}y_{i-1})+
\sum_{i\in Q_0^{odd}} y_ix_ix_{i-1}y_{i-1}.
\end{align*}
The potential $W$ is $\T$-invariant because
$$
q_{x_i}=q^{s_{i+1}}
\quad,\quad
q_{y_i}=q^{s_{i+1}}
\quad,\quad
t_{x_i}=t^{-s_{i+1}}
\quad,\quad
t_{y_i}=t^{s_{i+1}}
\quad,\quad
q_{\omega_i}=q^{-2s_i}
\quad,\quad
t_{\omega_i}=1.
$$
The deformed K-theoretic Hall algebra without potential
$\KHA(Q)_\T$ is defined as in \S\ref{sec:DHA}.
The normalization of the twist $\Omega$ of the multiplication $\circ$ in \eqref{tp} 
is fixed in the proof of Theorem \ref{thm:B} below.
The deformed K-theoretic Hall algebra with potential $\KHA(Q,W)_\T$ is defined as in 
\S\ref{sec:DHAW}.
For each dimension vector $\beta\in\bbN^{Q_0}$ we have
$$\KHA(Q,W)_{\beta,\T}=\G_0^\sg(\calW_{\beta,\T})
\quad,\quad
\KHA(Q)_{\beta,\T}=\G_0(\calX_{\beta,\T})$$
where $\calW_{\beta,\T}$ and $\calX_{\beta,\T}$ are the following dg-stacks
\begin{align}\label{stacks}\calW_{\beta,\T}=[\,\{0\}\times^R_{\bbC} X_\beta\,/\,G_{\beta,\T}\,]
\quad,\quad
\calX_{\beta,\T}=[X_\beta\,/\,G_{\beta,\T}\,].\end{align}
The derived fiber product is relative to the function $w_\beta:X_\beta\to\bbC$ 
given by the trace of the potential $W$.
For each vertex $i\in Q_0$, each positive integer $l$ and each dominant weight $\lambda\in \P_{r\alpha_i}$, let  
$\calO(\lambda)_{\calX_{l\alpha_i,\T}}$ and $\calO(\lambda)_{\calW_{l\alpha_i,\T}}$ be
the obvious coherent sheaves over the stacks $\calX_{l\alpha_i,\T}$ 
and $\calW_{l\alpha_i,\T}$
associated with the representation $V(\lambda)$ of $G_{l\alpha_i}$.
More precisely, recall that the dg-stack
$\calW_{l\alpha_i,\T}$ is a ringed space with underlying topological space
$\calX_{l\alpha_i,\T}=[X_{l\alpha_i}\,/\,G_{l\alpha_i,\T}].$
We consider the odd and even cases separately :
\begin{itemize}[leftmargin=8mm]

\item 
If $i$ is odd, then we have $\calX_{l\alpha_i,\T}=BG_{l\alpha_i,\T}$ and $w_{l\alpha_i}=0$.
Hence 
$$\calW_{l\alpha_i,\T}=[R\Spec(\bbC[t]/t^2)\,/\,G_{l\alpha_i,\T}]
\quad,\quad\deg(t)=-1,$$
with the zero differential.
Then $\calO(\lambda)_{\calW_{l\alpha_i,\T}}$ is the sheaf of 
sections of the bundle $[V(\lambda)\,/\,G_{l\alpha_i,\T}]$ over $BG_{l\alpha_i,\T}$ with the zero $t$-action.

\item
If $i$ is even, then we have
$\calX_{l\alpha_i,\T}=[\frakg_{l\alpha_i}\,/\,G_{l\alpha_i,\T}]$ and
$w_{l\alpha_i}=0$. Hence 
$$\calW_{l\alpha_i,\T}=[R\Spec(\SS(\frakg^*_{l\alpha_i})\otimes\bbC[t]/t^2)\,/\,G_{l\alpha_i,\T}]
\quad,\quad\deg(t)=-1,$$
with the zero differential.
Then
$\calO(\lambda)_{\calW_{l\alpha_i,\T}}$ is the sheaf of sections of the bundle
$[V(\lambda)\times\frakg_{l\alpha_i}\,/\,G_{l\alpha_i,\T}]$
over $[\frakg_{l\alpha_i}\,/\,G_{l\alpha_i,\T}]$ with the zero $t$-action
\end{itemize}
The following theorem is an analogue in the super case of Theorem \ref{thm:A}(a).

\smallskip

\begin{theorem}\label{thm:B}
There is an $\bbN^{Q_0}$-graded $\R_\T$-algebra homomorphism
$\psi:\pmb\calU^+_\T\to \KHA(Q,W)_\T$ such that
$\psi((\e_{i,r})^{(l)})=[\calO(r\omega_l)_{\calW_{l\alpha_i,\T}}]$
for all $i\in Q_0,$ $ r,l\in\bbZ$ with $l>0.$
\end{theorem}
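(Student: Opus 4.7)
The plan is to mimic the proof of Theorem \ref{thm:A} by routing $\psi$ through a shuffle algebra. Specifically, I would first construct an $\R_\T$-algebra embedding $\rho:\KHA(Q,W)_\T\to\SH_{\T,\K}^\heartsuit$, where $\SH_{\T,\K}^\heartsuit$ denotes the underlying space \eqref{SHT} equipped with a shuffle product $\heartsuit$ whose kernel I compute explicitly below. Then I would construct a $\K_\T$-algebra homomorphism $\psi_{\mathrm{sh}}:\pmb\calU^+_\K\to\SH_{\T,\K}^\heartsuit$ sending $\e_{i,n}\mapsto x_{i,1}^{n}$, check that its restriction to $\pmb\calU^+_\T$ lands in $\rho(\KHA(Q,W)_\T)$, and define $\psi$ as the composition with $\rho^{-1}$ on the image.

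To build $\rho$, I would apply the K-theoretic dimensional reduction \eqref{Kdimred0} to $\KHA(Q,W)_{\beta,\T}=\G_0^\sg(\calW_{\beta,\T})$. At every even vertex $i\in Q_0^{ev}$ the potential $W$ is linear in $\omega_i$ with derivative $y_ix_i-x_{i-1}y_{i-1}$, so dimensional reduction removes the loops exactly as in \S\ref{sec:triple}. After this, one is reduced to computing the Grothendieck group of a derived stack analogous to $T^*\calX_{\beta,\T}$ but with the cubic constraint $y_ix_ix_{i-1}y_{i-1}=0$ imposed at odd vertices. Applying Segal concentration together with a Weyl character formula computation in the spirit of Lemma \ref{lem:induction} and Lemma \ref{lem:inductionT}, and with the twist $\Omega$ in \eqref{tp} normalized so as to absorb the correct signs and powers of $t$, yields an $\R_\T$-algebra embedding $\rho$ with kernel
$$
\zeta^\heartsuit_{\alpha,\gamma}
=\prod_{h\in Q_1^{+}\sqcup Q_1^{-}}
\prod_{\substack{0<r\leqslant a_i\\ b_j\geqslant s>a_j}}
\frac{q_{h'}x_{i,r}-t_{h'}^{-1}x_{j,s}}{x_{i,r}-q_ht_hx_{j,s}}
\;\cdot\!\!\prod_{i\in Q_0^{ev}}\prod_{\substack{0<r\leqslant a_i\\ b_i\geqslant s>a_i}}
\frac{q^{a_{i,i}}x_{i,r}-x_{i,s}}{x_{i,r}-x_{i,s}},
$$
the diagonal factor being absent at odd vertices because $Q$ carries no loop there and the dg-stack $\calW_{l\alpha_i,\T}$ at a single odd vertex trivializes as described just before the theorem. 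Torsion-freeness of $\KHA(Q,W)_{\beta,\T}$ over $\R_{G_{\beta,\T}}$, needed for injectivity of $\rho$, follows from a fixed-point concentration argument modeled on Proposition \ref{prop:TFT} once the two-dimensional torus $\T$ satisfies a condition analogous to \eqref{large}.

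With $\rho$ in hand I would define $\psi_{\mathrm{sh}}$ by $\e_{i,n}\mapsto x_{i,1}^n$ and verify the defining relations (a)--(d) of $\pmb\calU^+_\K$ as shuffle identities. Relations (a) and (b) reduce to a quadratic calculation on the bidegree $(\alpha_i,\alpha_j)$ component of $\SH_{\T,\K}^\heartsuit$, using the explicit form of $\zeta^\heartsuit$ and the matrix $(m_{i,j})$; the sign $(-1)^{|i|\,|j|}$ appearing in \eqref{supercom} comes out from the $Q_1^+$ versus $Q_1^-$ split in the off-diagonal factors. The cubic Serre relation (c) is exactly the Enriquez-Grojnowski calculation that underlies Lemma \ref{lem:UST} (compare Appendix \ref{app:A}), while the quartic relation (d) is the genuinely new identity. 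Once (a)--(d) are established, $\psi_{\mathrm{sh}}$ descends to an $\R_\T$-algebra homomorphism on $\pmb\calU^+_\T$; a single-vertex calculation parallel to \eqref{power} and Lemma \ref{lem:A1} shows that $\psi_{\mathrm{sh}}((\e_{i,n})^{(l)})=(x_{i,1}\cdots x_{i,l})^n$ coincides with $\rho([\calO(n\omega_l)_{\calW_{l\alpha_i,\T}}])$, so the image of $\pmb\calU^+_\T$ is contained in $\rho(\KHA(Q,W)_\T)$ and defining $\psi$ via $\rho^{-1}$ is legitimate.

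The main obstacle will be the verification of the quartic Serre relation (d). Unlike the cubic case, it has no classical quiver-Hecke analogue and requires a four-fold nested symmetrization of a rational function of bidegree $2\alpha_i+\alpha_{i-1}+\alpha_{i+1}$, where the cancellations rely essentially on the vanishing of the diagonal factor $q^{a_{i,i}}x_{i,r}-x_{i,s}$ at odd vertices: the super-commutators $[\![\,\text{-}\,,\,\text{-}\,]\!]$ must conspire with the $t$-shifts $m_{i,j}$ to produce a cocycle whose antisymmetrization over the $z$-variables vanishes identically. A secondary technical issue is the precise normalization of the twist $\Omega$ in \eqref{tp}; it must reproduce both the super signs and the $t^{m_{i,j}}$ prefactors of the super Drinfeld relations, which forces a careful bookkeeping of the arrows in $Q_1^+$ relative to $Q_1^-$. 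Once these points are handled, the remainder of the argument—injectivity of $\rho$, identification of the distinguished one-vertex classes, and descent to $\pmb\calU^+_\T$—is a routine adaptation of \S\ref{sec:thmA}.
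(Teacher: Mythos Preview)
Your strategy has a genuine gap at the step where you claim an embedding $\rho:\KHA(Q,W)_\T\hookrightarrow\SH_{\T,\K}^\heartsuit$. This would require torsion-freeness of $\KHA(Q,W)_{\beta,\T}$ over $\R_{G_{\beta,\T}}$ for \emph{every} dimension vector $\beta$, and you assert this follows from an argument ``modeled on Proposition~\ref{prop:TFT}.'' But that proposition relies on the specific structure of the preprojective stack: dimensional reduction to $\calI_{\beta,\T}$ and the stratification of Lemma~\ref{lem:2.2} over the nilpotent cone. In the super case the potential at odd vertices is genuinely cubic (no loop variable to reduce along), so after dimensional reduction you are left with a stack cut out by equations like $y_ix_ix_{i-1}y_{i-1}=0$ for which no analogue of Lemma~\ref{lem:2.2} is available. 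The paper explicitly says it does \emph{not} have a torsion-freeness statement here, and leaves the injectivity of $\psi$ as Conjecture~\ref{conj:A}; your proposed embedding would essentially resolve that conjecture.

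The paper avoids this by reversing the direction of the comparison map. Instead of embedding $\KHA(Q,W)_\T$ into a shuffle algebra, it uses the pushforward $\rho:\KHA(Q,W)_\T\to\KHA(Q)_\T$ of Lemma~\ref{lem:pad}, which is an algebra map but not a priori injective. One first builds $\psi_0:\pmb\calU^+_\T\to\KHA(Q)_\T$ by the shuffle computation (your relations (a)--(d), including the quartic one, are checked there), then defines $\psi$ on generators by the obvious classes $[\calO(r\omega_1)_{\calW_{\alpha_i,\T}}]$, and finally must verify the defining relations hold in $\KHA(Q,W)_\T$. Since $\rho$ is multiplicative and the relations hold after applying $\rho$, it suffices that $\rho_\beta$ be injective only for the finitely many small $\beta$ on which the relations live: $\alpha_i+\alpha_j$, $2\alpha_i+\alpha_j$ with $i$ even, and $\alpha_{i-1}+2\alpha_i+\alpha_{i+1}$ with $i$ odd. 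For each of these the paper checks torsion-freeness by hand via dimensional reduction and Mayer-Vietoris. This local-injectivity trick is the key idea your proposal is missing. (A minor point: your shuffle kernel also drops the diagonal factor $1/(x_{i,r}-x_{i,s})$ at odd vertices, but the Weyl denominator is still there---compare the paper's $\zeta^\diamond_{\alpha,\gamma}$ in the proof of Lemma~\ref{lem:super}.)
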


Before to prove the theorem, we first consider the K-theoretic
Hall algebra associated with the quiver $Q$ without potential
as in \S\ref{sec:DHA}.

\begin{lemma}\label{lem:super}
There is an $\bbN^{Q_0}$-graded $\R_\T$-algebra homomorphism
$\psi_0:\pmb\calU^+_\T\to \KHA(Q)_\T$ such that
$\psi_0((\e_{i,r})^{(l)})=[\calO(r\omega_l)_{\calX_{l\alpha_i,\T}}]$
for all $i\in Q_0,$ $ r,l\in\bbZ$ with $l>0.$
\end{lemma}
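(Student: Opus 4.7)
The strategy follows Step 1 of the proof of Theorem \ref{thm:A} (Lemmas \ref{lem:phi} and \ref{lem:induction}), now adapted to the super setting. The plan is to (i) embed $\KHA(Q)_\T$ into a super shuffle algebra, (ii) construct an algebra homomorphism from $\pmb\calU^+_\T$ into this shuffle algebra by verifying the super-Drinfeld and Serre relations, and (iii) descend this homomorphism to a map valued in $\KHA(Q)_\T$.

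By \eqref{DKHAWP} and the $\T$-equivariant version of Lemma \ref{lemma:shuffle} (as used in Lemma \ref{lem:KHAQT1}), the untwisted product $*$ on $\KHA(Q)_\T$ is the shuffle product with kernel $\zeta^*_{\alpha,\gamma}$ obtained from \eqref{kernel0} by replacing $q_h$ with $q_h t_h$ throughout. We fix the twist $\Omega$ in \eqref{tp} to be the tensor product of the geometric factor $\det(\bar X_U^<)^{-1}[\dim \bar X_U^<]$ of \eqref{Omega} (with respect to the ordering $0<1<\cdots<m+n-1$ of $Q_0$) with a super-sign $\epsilon(\alpha,\gamma) = (-1)^{\sigma(\alpha)\sigma(\gamma)}$, where $\sigma(\beta) = \sum_{i\in Q_0^{odd}} b_i \pmod{2}$. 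The first factor plays the same role as in \S\ref{sec:triple}, while $\epsilon(\alpha,\gamma)$ encodes the super-commutation sign $(-1)^{|i||j|}$ of the super-Drinfeld relation. After conjugation by the same type of rational function as in the proof of Lemma \ref{lem:KHAQT1}, this yields an $\R_\T$-algebra embedding
$$\nu_0 : \KHA(Q)_\T \hookrightarrow \SH^{\diamond}_{\T,\K}$$
into a super shuffle algebra whose kernel $\zeta^\diamond_{\alpha,\gamma}$ is tuned so that the Drinfeld relation of $\pmb\calU^+_\T$ corresponds to the super-symmetry
$$\zeta^\diamond_{\alpha_i,\alpha_j}(z,w)\,(q^{a_{ij}}z - t^{m_{ij}}w) = (-1)^{|i||j|}\,\zeta^\diamond_{\alpha_j,\alpha_i}(w,z)\,(z - q^{a_{ij}}t^{m_{ij}}w).$$

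Next, adapting the proof of Proposition \ref{prop:Shuffle} and of its $\T$-deformed version Lemma \ref{lem:UST}, we build an $\R_\T$-algebra homomorphism
$$\tilde\psi_0 : \pmb\calU^+_\T \to \SH^{\diamond}_{\T,\K},\qquad (\e_{i,r})^{(l)} \mapsto (x_{i,1}\cdots x_{i,l})^r.$$
The super-Drinfeld relations (a), (b) follow directly from the super-symmetry of the kernel $\zeta^\diamond$. The cubic Serre relation (c) is verified by the standard specialization argument in Appendix \ref{app:A}. The main obstacle is the quartic Serre relation (d), which is genuinely new to the super case; it will be checked by a four-variable shuffle computation, reducing via iterated specialization of $z_1,z_2,u,v$ to the wheel-type resonances of the kernel to the vanishing of a Laurent polynomial anti-symmetric in $z_1, z_2$, following the analysis of \cite{BM19}. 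Finally, an explicit character computation analogous to \eqref{power}, carried out separately for $i$ even (where $\calX_{l\alpha_i,\T}=[\frakg_{l\alpha_i}/G_{l\alpha_i,\T}]$) and $i$ odd (where $\calX_{l\alpha_i,\T}=BG_{l\alpha_i,\T}$), shows that $\nu_0([\calO(r\omega_l)_{\calX_{l\alpha_i,\T}}]) = (x_{i,1}\cdots x_{i,l})^r$. Hence the image of $\tilde\psi_0$ lies in $\nu_0(\KHA(Q)_\T)$, and $\tilde\psi_0$ factors through $\nu_0$ to give the required homomorphism $\psi_0$.
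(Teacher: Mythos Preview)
Your overall strategy coincides with the paper's: identify $\KHA(Q)_\T$ with a shuffle algebra via an explicit kernel, then verify the Drinfeld and Serre relations of $\pmb\calU^+_\T$ for the elements $(x_{i,1}\cdots x_{i,l})^r$. Two remarks are in order.

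First, the detour through an embedding $\nu_0:\KHA(Q)_\T\hookrightarrow\SH^\diamond_{\T,\K}$ followed by a factorization argument is unnecessary. Since $\KHA(Q)_\T=\bigoplus_\beta\R_{G_{\beta,\T}}$ with $\circ$ already a shuffle product, the paper simply writes down the kernel $\zeta^\circ_{\alpha,\gamma}$ explicitly (this fixes the twist $\Omega$), conjugates it to a more symmetric kernel $\zeta^\diamond_{\alpha,\gamma}$, and checks the relations directly in this algebra. No descent step is needed; the map $\psi_0$ is defined outright. Your specification of the twist via a global parity cocycle $\epsilon(\alpha,\gamma)=(-1)^{\sigma(\alpha)\sigma(\gamma)}$ is also not quite what the paper uses: the sign in $\zeta^\circ_{\alpha,\gamma}$ is a product of $(-1)^{|i|\cdot|j|}$ over all relevant pairs of coordinates with $i<j$, not a single overall sign.

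Second, and more importantly, the quartic Serre relation (d) is the substance of the lemma and you have not proved it. Saying it ``will be checked by a four-variable shuffle computation \ldots\ following the analysis of \cite{BM19}'' is not a proof; \cite{BM19} does not contain this shuffle computation. The paper carries it out: after reducing as in Appendix~\ref{app:A} to the degree-zero case and substituting $a=x_{i-1}$, $b_1=t^{-s_i}x_{i,1}$, $b_2=t^{-s_i}x_{i,2}$, $c=x_{i+1}$, one must show that $\Sym_b\big(P(a,b_1,b_2,c)/(b_1-b_2)\big)=0$ for an explicit rational function $P$ built from the five terms of the expanded quartic bracket. This is a genuine (if elementary) identity of rational functions and must be verified directly; it does not reduce to a ``wheel condition'' or to anti-symmetry alone.
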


\begin{proof}
As an $\R_\T$-module, we have
$\KHA(Q)_\T=\bigoplus_{\beta\in\bbN^{Q_0}}\R_{G_{\beta,\T}}$.
The multiplication $*$ is the shuffle product associated with kernel \eqref{kernel0} such that
\begin{align*}
\zeta_{\alpha,\gamma}^*=\prod_{h\in Q_1}
\prod_{\substack{b_i\geqslant r>a_i\\0<s\leqslant a_j}}
(1-x_{i,r}/q_ht_hx_{j,s})
\cdot\prod_{i\in Q_0}\prod_{\substack{0<r\leqslant a_i\\b_i\geqslant s>a_i}}(1-x_{i,r}/x_{i,s})^{-1}.
\end{align*}
We normalize the twist $\Omega$ in \eqref{tp} such that
the twisted multiplication $\circ$  is the shuffle product associated with the following kernel
\begin{align*}
\zeta_{\alpha,\gamma}^\circ&=\prod_{h\in Q_1^<}
\prod_{\substack{b_i\geqslant r>a_i\\0<s\leqslant a_j}}
(-1)^{|i|\,\cdot\,|j|}(1-x_{i,r}/q_ht_hx_{j,s})
\cdot \prod_{h\in Q_1^>}\prod_{\substack{b_i\geqslant r>a_i\\0<s\leqslant a_j}}
(1-q_ht_hx_{j,s}/x_{i,r})\\
&\quad
\cdot\prod_{i\in Q_0^{ev}}\prod_{\substack{0<r\leqslant a_i\\b_i\geqslant s>a_i}}
\frac{x_{i,r}-q^{2s_i}x_{i,s}}{x_{i,r}-x_{i,s}}
\cdot\prod_{i\in Q_0^{odd}}\prod_{\substack{0<r\leqslant a_i\\b_i\geqslant s>a_i}}
\frac{1}{x_{i,r}-x_{i,s}}.
\end{align*}
Up to the conjugation by some rational function, this kernel can be written as follows
\begin{align*}
\zeta^\diamond_{\alpha,\gamma}
&=
\prod_{h\in Q^<_1}
\prod_{\substack{b_i\geqslant r>a_i\\0<s\leqslant a_j}}
(-1)^{|i|\,\cdot\,|j|}
\frac{t_{h'}x_{i,r}-q_{h}x_{j,s}}{q_{h'}t_{h'}x_{i,r}-x_{j,s}}
\cdot\prod_{i\in Q_0^{ev}}\prod_{\substack{0<r\leqslant a_i\\b_i\geqslant s>a_i}}
\frac{x_{i,r}-q^{2s_i}x_{i,s}}{x_{i,r}-x_{i,s}}
\cdot\prod_{i\in Q_0^{odd}}\prod_{\substack{0<r\leqslant a_i\\b_i\geqslant s>a_i}}
\frac{1}{x_{i,r}-x_{i,s}},\\
&=\prod_{\substack{h\in Q_1\\i\neq j}}
\prod_{\substack{b_i\geqslant r>a_i\\0<s\leqslant a_j}}\zeta_{i,j}(x_{i,r},x_{j,s})
\cdot\prod_{i\in Q_0}\prod_{\substack{b_i\geqslant r> a_i\\0<s\leqslant a_i}}
\zeta_{i,i}(x_{i,r},x_{i,s}),
\end{align*}
where the rational function $\zeta_{i,j}(u)$ is given by
\begin{align*}
\zeta_{i,j}(u,v)&=(-1)^{|i|\,\cdot\,|j|}
\frac{q^{a_{i,j}}u-t^{m_{i,j}}v}{u-q^{a_{i,j}}t^{m_{i,j}}v}\ \text{if}\  i<j,\\
\zeta_{i,j}(u,v)&=
1\ \text{if}\  i>j,\\
\zeta_{i,i}(u,v)&=\frac{q^{2s_i}u-v}{u-v}\ \text{if}\ i\in Q_0^{ev},\\
\zeta_{i,i}(u,v)&=\frac{1}{u-v}\ \text{if}\ i\in Q_0^{odd}.
\end{align*}
Let $\diamond$ be the corresponding shuffle product.
If $\beta=\alpha_i$ we abbreviate $x_i=x_{i,1}$ in $\R_{G_{\beta,\T}}$.
We define the following generating series 
$$e_i(z)=\sum_{r\in\bbZ}(x_i)^r\,z^{-r}\in\R_{G_{\beta, T}}[\![z^{-1}, z]\!].$$
The Drinfeld relations are easily checked.
Fix two vertices $i,$ $j$ in $Q_0$. 
If $i=j$ are odd, then a direct computation yields
\begin{align}\label{R1}e_i(z)\diamond e_i(w)=-e_i(w)\diamond e_i(z).\end{align}
If $i=j$ are even, then we have
\begin{align}\label{R4}(w-q^{2s_i}z)\,e_i(z)\diamond e_i(w)=(q^{2s_i}w-z)\,e_i(w)\diamond e_i(z).\end{align}
If $i< j$ and $a_{i,j}\neq 0$, then we have 
\begin{align}\label{R5}
(t^{m_{i,j}}w-q^{a_{i,j}}z)\,e_i(z)\diamond e_j(w)=(-1)^{|i|\,\cdot\,|j|}
(q^{a_{i,j}}t^{m_{i,j}}w-z)\,e_j(w)\diamond e_i(z).\end{align}
If $i< j$ and $a_{i,j}=0$, then we have
\begin{align}\label{R2}e_i(z)\diamond e_j(w)=(-1)^{|i|\,\cdot\,|j|}e_j(w)\diamond e_i(z).\end{align}
Now, we check the Serre relations. We abbreviate $e_i=e_{i,0}$ for all $i$.
It is enough to prove the following, see Appendix \ref{app:A} :
\begin{itemize}[leftmargin=8mm]

\item  if $i$ even and $j-i=\pm 1$, then 
\begin{align}\label{S1}[\![\,e_i\,,\,[\![\,e_i\,,\,e_j\,]\!]]\!]=0,\end{align}

\item if $i$ odd and $i-1$, $i+1$ even, then 
\begin{align}\label{S2}
[\![\,e_i\,,\,[\![\,e_{i+1}\,,\,[\![e_i\,,\,e_{i-1}\,]\!]]\!]]\!]=0.\end{align}

\end{itemize}
Relation \eqref{S1} is proved as in  Appendix \ref{app:A}, because $i$ is even.
Let us prove \eqref{S2}. 
Assume that $i$ is odd and $i-1$, $i+1$ even. 
For all $i_1,$ $i_2,$ $i_3,$ $i_4$ we  abbreviate 
$$
e_{i_1\,,\,i_2\,,\,i_3\,,\,i_4}=e_{i_1}\diamond e_{i_2}\diamond e_{i_3}\diamond e_{i_4}.$$
From \eqref{R1} and \eqref{R2}, we get
$$e_i\diamond e_i=0
\quad,\quad
e_{i-1}\diamond e_{i+1}=e_{i+1}\diamond e_{i-1}.$$
We deduce that
\begin{align*}
[\![\,e_i\,,\,[\![\,e_{i+1}\,,\,[\![e_i\,,\,e_{i-1}\,]\!]]\!]]\!]=
e_{i,i+1,i,i-1}-[2]_qe_{i,i+1,i-1,i}+e_{i,i-1,i,i+1}+e_{i+1,i,i-1,i}+e_{i-1,i,i+1,i}.
\end{align*}
Assume that the order of the vertices of $Q$ is such that $i-1<i<i+1$. Then, we have
$$\zeta_{i+1,i}=\zeta_{i,i-1}=\zeta_{i+1,i-1}=\zeta_{i-1,i+1}=1
\quad,\quad
\zeta_{i-1,i}(u,v)=\zeta_{i,i+1}(v,u).
$$
We abbreviate  $a=x_{i-1}$, $b_1=t^{-s_i}x_{i,1}$, $b_2=t^{-s_i}x_{i,2}$ and $c=x_{i+1}$. 
We must check that 
$$\Sym_{b}\Big(P(a,b_1,b_2,c)\,/\,(b_1-b_2)\Big)=0$$
where $P$ is the following rational function
\begin{align*}
P(a,b_1,b_2,c) &=
\frac{q a -b_2}{a-q b_2}+\frac{q c -b_1}{c-q b_1}
-[2]_q\frac{(q a-b_2)(q c -b_1)}{(a-q b_2)(c-q b_1)}\\
&\ +\frac{(q a-b_2)(q c -b_1)(q c-b_2)}{(a-q b_2)(c-q b_1)(c-q b_2)}
+\frac{(q a-b_1)(q a -b_2)(q c-b_1)}{(a-q b_1)(a-q b_2)(c-q b_1)}.
\end{align*}
This is done by a direct computation.
For other orderings the computation is very similar.
We may assume that $i<i-1<i+1$. Then, after the change of variables above, we must check that 
$$\Sym_{b}\Big(Q(a,b_1,b_2,c)\,/\,(b_1-b_2)\Big)=0$$
where $Q$ is the following rational function
\begin{align*}
Q(a,b_1,b_2,c) &=
\frac{a-q b_1}{q a-b_1}+\frac{q c -b_1}{c-q b_1}
-[2]_q\frac{(a-qb_1)(q c -b_1)}{(qa-b_1)(c-q b_1)}\\
&\ +\frac{(a-qb_1)(a -qb_2)(q c-b_1)}{(qa-b_1)(qa-b_2)(c-q b_1)}
+\frac{(a-qb_1)(q c -b_1)(q c-b_2)}{(qa-b_1)(a-q b_1)(c-q b_2)}.
\end{align*}
\end{proof}

\smallskip

We can now prove the theorem.

\begin{proof}[Proof of Theorem $\ref{thm:B}$]
By Lemma \ref{lem:pad}, there is an $\R_\T$-algebra homomorphism
\begin{align*}\rho:\KHA(Q,W)_\T\to\KHA(Q)_\T.\end{align*}
We'll prove that the map
$\psi_0:\pmb\calU^+_\T\to \KHA(Q)_\T$ factorizes through $\rho$ into a map
$$\psi:\pmb\calU^+_\T\to \KHA(Q,W)_\T,$$ 
i.e., we have a commutative diagram
\begin{align*}
\xymatrix{\KHA(Q,W)_\T\ar[r]^-\rho&\KHA(Q)_\T\\
\pmb\calU^+_\T\ar@{.>}[u]^-\psi\ar[ur]_-{\psi_0}&}
\end{align*}
Recall that for each dimension vector $\beta\in\bbN^{Q_0}$ we have
$$\KHA(Q,W)_{\beta,\T}=\G_0^\sg(\calW_{\beta,\T})
\quad,\quad
\KHA(Q)_{\beta,\T}=\G_0(\calX_{\beta,\T})$$
where $\calW_{\beta,\T}$ and $\calX_{\beta,\T}$ are the dg-stacks
in \eqref{stacks}.
There is an obvious l.c.i.~closed immersion of dg-stacks $i:\calW_\T\to\calX_\T$. 
The map $\rho$ is the pushforward by the map $i$. 
We have $\rho\psi(\e_{i,r})=\psi_0(\e_{i,r}).$
To prove that the assignment 
$\e_{i,r}\mapsto[\calO(r\omega_1)_{\calW_{\alpha_i,\T}}]$
for all $i\in Q_0$ and $r\in\bbZ$
gives an algebra homomorphism
$\psi:\pmb\calU^+_\T\to \KHA(Q,W)_\T$, we must check that the elements
$\psi(\e_{i,r})$ satisfy the defining relations of $\pmb\calU^+_\T$.
Since the map $\rho$ is an algebra homomorphism and these relations hold in $\KHA(Q)_\T$
by Lemma \ref{lem:super}, it is enough to check that the restriction $\rho_\beta$ of the map $\rho$ 
to $\KHA(Q,W)_{\beta,\T}$ is injective in the following cases
\begin{itemize}[leftmargin=8mm]
\item[$\mathrm{(a)}$] 
$\beta=\alpha_i+\alpha_j$, 
\item[$\mathrm{(b)}$]
$\beta=2\alpha_i+\alpha_j$, $j=i-1$ or $i+1$,  and $i$ even, 
\item[$\mathrm{(c)}$]
$\beta=\alpha_{i-1}+2\alpha_i+\alpha_{i+1}$, $i$ odd, and $i-1$, $i+1$ even.
\end{itemize}
We can also assume that the vertices are not all even, because, else, the claim follows from the proof of
Theorem \ref{thm:A}. 
The map $\rho_\beta$ is an isomorphism if $w_\beta=0$ by Lemma \ref{lem:pad}.
Now, we consider each case separately.

\smallskip

In (b) we may assume that $j$ is odd. Set $\beta=2\alpha_i+\alpha_j$.
Let $X_\beta$ be the space of 
$\beta$-dimensional representations of the following quiver
$$\begin{tikzcd}
i \arrow[in=145,out=215,loop, looseness=4, "\omega"] \arrow[r,bend left,"x"] & 
j  \arrow[l,bend left,"y"]
\end{tikzcd}$$
Let $\calX_{\beta,\T}=[X_\beta\,/\,G_{\beta,\T}]$ be the corresponding moduli stack,
and $\calW_{\beta,\T}$ be the zero fiber of the map
$$w_\beta:\calX_{\beta,\T}\to\bbC\quad,\quad(x,y,\omega)\mapsto x\omega y.$$
We have $G_{\beta,T}=GL_2\times GL_1\times\T$. The $G_{\beta,\T}$-action on $X_{\beta,\T}$ is such that
$$(a,b,q,t)\cdot(x,y,\omega)=(a^{-1}bqt^{-1}x,ab^{-1}qty,q^{-2}a\omega a^{-1}).$$
We must check that the pushforward yields an injective map
\begin{align}\label{PF}
\G_0^\sg(\calW_{\beta,\T})\to\G_0(\calX_{\beta,\T})\end{align}
The K-theoretic version of the dimensional reduction \eqref{Kdimred0} yields an isomorphism
\begin{align}\label{dimred}\G_0([\{(x,y,\omega)\,;\,yx=0=\omega\}\, /\,G_{\beta,\T}])=\G_0^\sg(\calW_{\beta,\T}).
\end{align}
It is enough to check that the left hand side of \eqref{PF} is torsion free over $\R_{G_{\beta,\T}}$. 
Indeed, since the $G_{\beta,\T}$-fixed points loci of $X_\beta$ and $\{(x,y,\omega)\,;\,yx=0=\omega\}$
are both equal to $\{0\}$, the Segal concentration theorem implies that the map \eqref{PF} is generically invertible.
To do so, we must compute the $R_{G_{\beta,\T}}$-module
\begin{align*}
\G_0([\{(x,y,\omega)\,;\,yx=0=\omega\}\, /\,G_{\beta,\T}])&=\G_0([\{(x,y)\,;\,yx=0\}\, /\,G_{\beta,\T}]).\end{align*}
In $\bbA^4=\Spec\bbC[x_1,x_2,y_1,y_2]$, we have
$$\{yx=0\}=\{x=0\}\cup\{y=0\}
\quad,\quad y=\begin{pmatrix}y_1\\y_2\end{pmatrix}\quad,\quad x=(x_1,x_2)$$ 
The Mayer-Vietoris exact sequence in equivariant K-theory for the closed covering above
yields an exact sequence
$$\G_0([\{0\}/G_{\beta,\T}])\to \G_0([\{y=0\}\, /\,G_{\beta,\T}])\oplus\G_0([\{x=0\}\, /\,G_{\beta,\T}])\to
\G_0([\{yx=0\}\, /\,G_{\beta,\T}])\to 0,$$
See \S\ref{sec:MayerVietoris} for details.
Let $\alpha, \beta\in R_{G_{\beta,\T}}$ be the alternating sums of the exterior powers of the representations
$\bbC x_1\oplus\bbC x_2$ and $\bbC y_1\oplus\bbC y_2$ of the group $G_{\beta,\T}$ respectively.
The exact sequence above is
$$\xymatrix{
R_{G_{\beta,\T}}\ar[r]^-{d_1}&R_{G_{\beta,\T}}\oplus R_{G_{\beta,\T}}\ar[r]^-{d_2}&\G_0([\{yx=0\}\, /\,G_{\beta,\T}])
\ar[r]&0}$$
where the map $d_1$ takes the unit in $R_{G_{\beta,\T}}$ 
to the element $(\alpha,\beta)$ in $R_{G_{\beta,\T}}^2$.
The sequence $(\alpha,\beta)$ is regular in $R_{G_{\beta,\T}}$.
So the cokernel of $d_1$ is isomorphic to the ideal of $R_{G_{\beta,\T}}$ generated by $\alpha$ and $\beta$
as an $R_{G_{\beta,\T}}$-module, because the map $d_1$ is
the first half of the Koszul complex associated with the regular sequence $(\alpha,\beta)$.
This is a torsion free $R_{G_{\beta,\T}}$-module.

\smallskip

In (a) we have $w_\beta=0$ unless $i$ is odd and $j$ is even with $i-j=\pm 1$ (or conversely).
We are reduced to check that the pushforward \eqref{PF}
is injective in the same setting as in (b) above except that the dimension vector is now $\beta=\alpha_i+\alpha_j$.
The reason for this is the same as in the case (b).

\smallskip

Finally we consider the case (c). 
Set
$\beta=\alpha_{j}+2\alpha_i+\alpha_{k}$.
Let $X_\beta$ be the space of 
$\beta$-dimensional representations of the following quiver
$$
\begin{tikzcd}
j\arrow[in=145,out=215,loop, looseness=4, "\omega_j"] \arrow[r,bend left,"x_j"] & 
i\arrow[r,bend left,"x_k"]  \arrow[l,bend left,"y_j"]& 
k \arrow[in=325,out=35,loop,  looseness=4, "\omega_k"]\arrow[l,bend left,"y_k"]
\end{tikzcd}
$$
Let $\calX_{\beta,\T}=[X_\beta\,/\,G_{\beta,\T}]$ be the corresponding moduli stack,
and $\calW_{\beta,\T}$ be the zero fiber of the map
$$w_\beta:\calX_{\beta,\T}\to\bbC\quad,\quad
z=(x_j,x_k,y_j,y_k,\omega_j,\omega_k)\mapsto 
\omega_jy_jx_j-\omega_kx_ky_k+
x_kx_jy_jy_k.$$
The $\T$-action on $X_\beta$ is such that
$$(q,t)\cdot z=
(qt^{-1}x_j,q^{-1}tx_k,qty_j,q^{-1}t^{-1}y_k,q^{-2}\omega_j,q^2\omega_k).$$
We must check that the pushforward
\begin{align}\label{PF2}\G_0^\sg(\calW_{\beta,\T})\to\G_0(\calX_{\beta,\T})\end{align}
is injective. The proof is similar to the proof in the case (b). Indeed, let
$Y_\beta$ be the closed subset of $X_\beta$ given by
$$Y_\beta=\{\omega_j=y_k=y_jx_j=\omega_kx_k-x_kx_jy_j=0\}$$
and let
$\calY_{\beta,\T}=[Y_\beta\,/\,G_{\beta,\T}]$ be the corresponding moduli stack.
The K-theoretic version of the dimensional reduction \eqref{Kdimred0}
along the coordinates $\omega_j$, $y_k$ yields an 
$\R_{G_{\beta,\T}}$-module isomorphism
$$\G_0(\calY_{\beta,\T})=\G_0^\sg(\calW_{\beta,\T}).$$
Using the Mayer-Vietoris exact sequence in \S\ref{sec:MayerVietoris}, one proves the following

\begin{claim} 
The $\R_{G_{\beta,\T}}$-module 
$\G_0(\calY_{\beta,\T})$
is torsion free. 
\qed
\end{claim}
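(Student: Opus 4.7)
The plan is to follow the strategy of case (b) and exhibit $\G_0(\calY_{\beta,\T})$ as the cokernel of a Koszul-type differential whose image is an ideal of the integral domain $R_{G_{\beta,\T}}$. The starting observation is a structural simplification of the defining equations: on $Y_\beta$ the scalar $\langle y_j, x_j\rangle$ vanishes, which forces the rank$\,{\leqslant}\,1$ endomorphism $N := x_j y_j \in \End(V_i)$ to have trace zero, hence $N^2 = 0$ and $N$ is nilpotent. Writing the second defining equation as $x_k\cdot(\omega_k\,\mathrm{id}_{V_i} - N) = 0$, we see that $\omega_k\,\mathrm{id}_{V_i} - N$ is invertible whenever $\omega_k \neq 0$, so the equation forces $x_k = 0$ there. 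This yields the closed covering
$$Y_\beta = Y' \cup Y'', \qquad Y' := Y_\beta \cap \{\omega_k = 0\}, \qquad Y'' := Y_\beta \cap \{x_k = 0\},$$
which is the natural analogue of the decomposition $\{yx=0\} = \{x=0\}\cup\{y=0\}$ used in case (b).

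Next, I would apply the Mayer--Vietoris exact sequence from \S\ref{sec:MayerVietoris} to this covering and analyze each piece individually. The stratum $Y''$ becomes, after the elimination $x_k = 0$, the hypersurface $\{\langle y_j, x_j\rangle = 0\}$ inside the affine space with coordinates $(x_j, y_j, \omega_k)$; the Koszul resolution of the $G_{\beta,\T}$-equivariant regular element $\langle y_j, x_j\rangle$ then identifies $\G_0(\calY''_\T)$ with an ideal of $R_{G_{\beta,\T}}[\omega_k]$. The intersection $Y' \cap Y''$ has the same shape (with $\omega_k$ set to zero) and is treated identically. For $Y'$, setting $\omega_k = 0$ reduces the equations to $\langle y_j, x_j\rangle = 0$ and $\langle x_k, x_j\rangle\, y_j = 0$; I would then apply a secondary Mayer--Vietoris along the covering $Y' = (Y' \cap \{y_j = 0\}) \cup (Y' \cap \{\langle x_k, x_j\rangle = 0\})$ and reduce each resulting stratum to an affine space or a complete intersection in an affine space, presenting $\G_0(\calY'_\T)$ as a cokernel of a differential between free $R_{G_{\beta,\T}}$-modules whose image is generated by a regular sequence.

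Splicing together these Mayer--Vietoris sequences exhibits $\G_0(\calY_{\beta,\T})$ as the cokernel of a Koszul map $R_{G_{\beta,\T}} \to R_{G_{\beta,\T}}^{\oplus m}$ whose image, being generated by a regular sequence of $G_{\beta,\T}$-equivariant functions, is isomorphic to an ideal of the Laurent polynomial ring $R_{G_{\beta,\T}}$; hence it is torsion free. The principal obstacle will be to verify that each stage of the covering is a scheme-theoretic (not merely set-theoretic) decomposition, so that the Mayer--Vietoris sequence is exact in the form used, and to confirm that the sequences entering the Koszul complexes are indeed regular in $R_{G_{\beta,\T}}$; both can be checked by dimension counts at the generic points of the strata, together with the identification $R_{G_{\beta,\T}} = R_\T[x_{j,1}^{\pm 1}, x_{i,1}^{\pm 1}, x_{i,2}^{\pm 1}, x_{k,1}^{\pm 1}]^{W_\beta}$ of the coefficient ring as a regular integral domain.
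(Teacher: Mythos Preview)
Your overall strategy—decompose $Y_\beta$ via Mayer--Vietoris and reduce to pieces whose $\G_0$ is manifestly a free or ideal submodule of $R_{G_{\beta,\T}}$—matches the paper's one-line indication. The initial covering $Y_\beta = Y'\cup Y''$ with $Y'=\{\omega_k=0\}$ and $Y''=\{x_k=0\}$ is the natural one, and your observation that $N=x_jy_j$ is nilpotent on $Y_\beta$ (forcing $x_k=0$ when $\omega_k\neq 0$) is exactly right.

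There is, however, a genuine gap in your treatment of $Y''$ and of $Y'\cap Y''$. In case~(b) the equation $yx=0$ is a \emph{matrix} equation (because $\dim V_i=2$, $\dim V_j=1$, so $y$ is a column and $x$ a row), and the locus $\{yx=0\}$ splits set-theoretically as $\{x=0\}\cup\{y=0\}$—two linear subspaces. Here, by contrast, $\dim V_j=1$, so $y_jx_j=\langle y_j,x_j\rangle$ is a \emph{scalar}: the locus $\{y_jx_j=0\}$ is an irreducible quadric cone in $\bbC^4$, not a union of linear subspaces, and cannot be further broken up by a $G_{\beta,\T}$-equivariant Mayer--Vietoris. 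Your assertion that ``the Koszul resolution of the regular element $\langle y_j,x_j\rangle$ identifies $\G_0(\calY''_\T)$ with an ideal'' conflates two different things: the Koszul complex computes the pushforward class $i_*[\calO_{Y''}]\in\G_0(\calX_\T)\cong R_{G_{\beta,\T}}$, but it does \emph{not} compute $\G_0(\calY''_\T)$ itself. A singular hypersurface can have equivariant $\G_0$ strictly larger than a rank-one module. The same issue recurs for $Y'\cap Y''$, for the piece $Y'\cap\{x_kx_j=0\}$ (which still contains the quadric $\{y_jx_j=0\}$), and for the intersection $\{y_j=0,\,x_kx_j=0\}$ (another irreducible quadric). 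So the final ``splicing'' into a Koszul cokernel $R_{G_{\beta,\T}}\to R_{G_{\beta,\T}}^{\oplus m}$ is not justified as written.

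To close the gap you must compute $\G_0$ of the quadric cone $\{y_jx_j=0\}$ directly. One route is to stratify it by its four $G_\beta$-orbits and invoke an argument in the style of Lemmas~\ref{lem:2.2} and~\ref{lem:FILT}; another is to pass to its $G_{\beta,\T}$-equivariant small resolution (the total space of $\calO(-1)\oplus\calO(-1)$ over $\bbP^1$) and push forward. Either way this is an extra step beyond what you have sketched. A minor remark: your stated ``principal obstacle'' about scheme-theoretic versus set-theoretic coverings is a non-issue—$\G_0$ depends only on the underlying reduced scheme, and the Mayer--Vietoris sequence of \S\ref{sec:MayerVietoris} is formulated for set-theoretic closed coverings.
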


Thus the injectivity follows from the Segal concentration theorem because
the $G_{\beta,\T}$-fixed points loci of $X_\beta$ and $Y_\beta$
are both equal to $\{0\}$.
\end{proof}

\smallskip

The following conjecture is an analogue in the super case of Theorem \ref{thm:A}(b).

\smallskip

\begin{conjecture}\label{conj:A}
The map $\psi:\pmb\calU^+_\T\to \KHA(Q,W)_\T$ is injective.
\qed
\end{conjecture}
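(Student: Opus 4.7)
The plan is to mirror the three-step strategy used for Theorem \ref{thm:A}(b) in \S\ref{sec:thmA}: realize $\KHA(Q,W)_\T$ inside a shuffle algebra, factor $\psi$ through that shuffle presentation, and then prove injectivity on the shuffle side by a dimension count. First I would promote the factorization $\psi = (i_W)_*\circ\psi$ already used in the proof of Theorem \ref{thm:B} to a genuine embedding. Concretely, by dimensional reduction along the $\omega_i$-variables at the even vertices (the K-theoretic version \eqref{Kdimred0} applied separately at each even $i$), one identifies $\KHA(Q,W)_{\beta,\T}$ with $\G_0$ of a closed substack of $[X_{\beta}'/G_{\beta,\T}]$ cut out by the quartic equations coming from the odd contributions to $W$. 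This substack is $\T$-contractible to $\{0\}$ — the torus action has only the origin as fixed locus — so the Segal concentration theorem shows that $\KHA(Q,W)_{\beta,\T}$ embeds, after inverting a multiplicative set $S_\theta$, into $\K_{G_{\beta,\T}}$. Composed with the shuffle realization of $\KHA(Q)_\T$ described in the proof of Lemma \ref{lem:super}, this yields an $\R_\T$-algebra embedding
\[
\tilde\rho:\KHA(Q,W)_\T\hookrightarrow\SH_{\T,\K}^\diamond,
\]
where the shuffle kernel is the one computed in that proof.

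Next I would show that $\psi$ factors through a shuffle embedding of $\pmb\calU^+_\T$. The defining formulas $\psi_0((\e_{i,r})^{(l)})=(x_{i,1}\cdots x_{i,l})^r$ are already constructed at the shuffle level in Lemma \ref{lem:super}; combined with the embedding $\tilde\rho$ this gives a commutative triangle
\[
\xymatrix{
\pmb\calU^+_\T\ar[r]^-{\psi}\ar[dr]_-{\hat\psi}&\KHA(Q,W)_\T\ar@{^{(}->}[d]^-{\tilde\rho}\\
&\SH_{\T,\K}^\diamond
}
\]
so that injectivity of $\psi$ is implied by injectivity of $\hat\psi$. This reduces the problem to a purely algebraic question about the shuffle presentation of $\pmb\calU^+_\T$: the super quantum toroidal algebra embeds into its own shuffle incarnation.

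For the final dimension-matching step I would attempt a super analog of the argument in Step 3 of \S\ref{sec:thmA}. One specializes the variable $t$ so as to land in a classical limit, and tries to identify $\pmb\calU^+_\T\otimes_{\R_\T}\bbQ$ with the universal enveloping algebra of a loop super-Lie-algebra built from $L^+\frakg\frakl(m|n)$ together with its central extensions — the analog of Moody--Rao--Yokonuma's description used via \cite{E03}. The graded dimension on the $\KHA(Q,W)_\T$ side would be read off from a super version of the Borel--Moore/Riemann--Roch comparison in \cite{SV18}, in which the role of the Kac polynomial is played by Efimov-type BPS invariants of the quiver with potential $(Q,W)$; these invariants are expected to coincide, in the type $A$ case with our $W$, with the super root multiplicities of $L^+\frakg\frakl(m|n)$.

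The main obstacle will be precisely this last dimension comparison. Unlike the preprojective case, we lack both a torsion-freeness statement for $\KHA(Q,W)_{\beta,\T}$ over $\R_\T$ and a direct geometric model (like Nakajima's $\frakL_\beta$) establishing freeness; moreover the super analog of Enriquez's PBW theorem for quantum toroidal algebras is not available in the literature. A feasible intermediate reduction is to first prove the conjecture in the cohomological setting, using the critical COHA $\H(Q,W)$ and Davison's integrality/dimensional reduction theorems to match graded dimensions with super-Kac polynomials; one could then transport the result to K-theory via the Chern character / equivariant Riemann--Roch isomorphism \eqref{RR}, provided one can establish enough torsion-freeness over $\R_\T$ to make the specialization at $q=1$ exact. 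This remains conjectural and is, in our view, the core difficulty.
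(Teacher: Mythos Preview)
This statement is a \emph{conjecture} in the paper and carries no proof; the \qed\ merely closes the environment. The authors explicitly remark, just before stating it, that they do not have a torsion-freeness statement analogous to Proposition~\ref{prop:TFT}. There is therefore no argument in the paper to compare your proposal against.

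Your proposal is a plausible outline of a strategy rather than a proof, and you acknowledge as much in the final paragraph. But there is an internal gap earlier that undermines the reduction you set up. In the first step you assert an embedding $\tilde\rho:\KHA(Q,W)_\T\hookrightarrow\SH_{\T,\K}^\diamond$, arguing via dimensional reduction plus Segal concentration. Segal concentration, however, only gives an isomorphism \emph{after} inverting $S_\theta$; to conclude that the unlocalized $\KHA(Q,W)_{\beta,\T}$ injects into $\K_{G_{\beta,\T}}$ you need the $\G_0$ of the dimensionally-reduced stack to be $S_\theta$-torsion-free. That is precisely the analogue of Proposition~\ref{prop:TFT} which is missing here, as you yourself note later. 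Equivalently, your $\tilde\rho$ being injective amounts to $(i_W)_*$ being injective for \emph{all} dimension vectors $\beta$, whereas the proof of Theorem~\ref{thm:B} only establishes this for the handful of small $\beta$ needed to check the defining relations. So the commutative triangle in your second paragraph is not yet available, and the reduction to injectivity of $\hat\psi$ on the shuffle side is circular: establishing the embedding $\tilde\rho$ is essentially as hard as the conjecture itself.

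The dimension-matching step you sketch at the end --- a super-PBW theorem for $\pmb\calU^+_\T$ in the style of \cite{E03}, together with a BPS/super-Kac-polynomial computation on the geometric side --- is indeed the natural shape of an argument, and is consistent with what the paper's Conjecture~\ref{conj:B} and Theorem~\ref{thm:D} suggest in the cohomological setting. But as you say, none of the required inputs currently exist in the literature, so this remains an open problem.
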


\smallskip

\begin{remark} 
A similar shuffle realization of affine super quantum groups appears in \cite[\S 4]{T19}.
\end{remark}

\medskip

\begin{appendices}

\section{Proof of Lemma $\ref{lem:FILT}$}\label{app:C}
We fix a total order on the set $\Pi$ such that the subset
$X_{\leqslant O}=\bigcup_{O'\leqslant O}\calX_{O'}$ is closed in $X$ for each $O$. 
We have
$\G_1(\calX_O)^\top=\{0\}$
and the cycle map $\c_O:\G_0(\calX_O)\to\G_0(\calX_O)^\top$ is invertible.
Thus, the localization exact sequence 
in topological and algebraic equivariant K-theory yields the following commutative diagram
\begin{align*}
\xymatrix{
\G_0(\calX_{<O})\ar[r]^-{b_O}\ar[d]_-{\c_{<O}}&
\G_0(\calX_{\leqslant O})\ar[r]\ar[d]_-{\c_{\leqslant O}}&
\G_0(\calX_{O})\ar[r]\ar@{=}[d]_-{\c_O}&\{0\}\\
\G_0(\calX_{<O})^\top\ar[r]&
\G_0(\calX_{\leqslant O})^\top\ar[r]&
\G_0(\calX_{O,\T})^\top\ar[d]&\\
\{0\}\ar[u]&\G_1(\calX_{\leqslant O})^\top\ar[l]&\ar[l]
\G_1(\calX_{<O})^\top.
}
\end{align*}
The upper row and the lower rectangle are exact.
Now, assume that $\c_{<O}$ is invertible and $\G_1(\calX_{<O})^\top=\{0\}$.
Then $\G_1(\calX_{\leqslant O})^\top=\{0\}$.
Further, the map $b_O$ is injective and $\c_{\leqslant O}$ is invertible by the five lemma.
An induction implies that for each orbit $O$ the following holds
\hfill
\begin{itemize}[leftmargin=8mm]
\item the maps $\c_{<O}$ and $\c_{\leqslant O}$ are invertible,
\item $\G_1(\calX_{<O})^\top=\G_1(\calX_{\leqslant O})^\top=\{0\}.$
\end{itemize}
We deduce that the map $b_O$ is injective for each $O$, proving the lemma.

\section{Proof of Lemma $\ref{lem:UST}$}\label{app:A}

We must check that the defining relations of $\bfU^+_{\T,\K}$ hold in $\SH^\diamond_{\T,\K}$.
Since Condition \eqref{normal} holds, the kernel $\zeta^\diamond_{\alpha,\gamma}$ in \eqref{kernel}
takes the following form
\begin{align*}
\zeta^\diamond_{\alpha,\gamma}=
\prod_{\substack{i,j\in Q_0\\i<j}}
\prod_{\substack{0<r\leqslant a_i\\b_j\geqslant s>a_j}}\frac{q^{-a_{ij}}x_{i,r}-tx_{j,s}}{x_{i,r}-q^{-a_{ij}}tx_{j,s}}
\cdot\prod_{i\in Q_0}\prod_{\substack{0<r\leqslant a_i\\b_i\geqslant s>a_i}}
\frac{q^{-2}x_{i,r}-x_{i,s}}{x_{i,r}-x_{i,s}}.
\end{align*}
Relations (a) and (b) are straightforward. Let us concentrate on (c).
The proof of the relation (d) is similar way and is left to the reader.
By the same argument as in \cite{Gr94}, see also \cite[\S 10.4]{N00}, it is enough to prove the following relation
$$\sum_{h+k=l}(-1)^h\,
\psi(\e_{i,0})^{\diamond (h)}\diamond\psi(\e_{j,0})\diamond\psi(\e_{i,0})^{\diamond (k)}=0
\quad,\quad
i\neq j
\quad,\quad l=1-a_{i,j},$$
where $a^{\diamond(h)}$ is the $q$-divided power of $a$ relative to the multiplication $\diamond$.
We may assume that $i<j$. We'll abbreviate $x_r=x_{i,r}$, $y=tx_{j,1}$.
We have 
$$
\psi(\e_{i,0})^{\diamond (h)}=\frac{1}{[h]_q!}\Sym_x(\Delta_h)
\quad,\quad 
\Delta_h=\prod_{0<r<s\leqslant h}\frac{q^{-2}x_r-x_s}{x_r-x_s}.
$$
We deduce that
$$\psi(\e_{i,0})^{\diamond (h)}\diamond\psi(\e_{j,0})=\frac{1}{[h]_q!}
\Sym_x\Big(\Delta_h\cdot
\prod_{r=1}^h\frac{q^{-a_{ij}}x_r-y}{x_r-q^{-a_{ij}}y}\Big)\,\Big.
$$
Hence, we have
$$\psi(\e_{i,0})^{\diamond (h)}\diamond\psi(\e_{j,0})\diamond\psi(\e_{i,0})^{\diamond (k)}=
\frac{1}{[h]_q![k]_q!}
\Sym_x\Big(\Delta_{k+h}\cdot
\prod_{r=1}^h\frac{q^{-a_{ij}}x_r-y}{x_r-q^{-a_{ij}}y}\Big)\,\Big.
$$
We define
$$F(x_1,\dots,x_l,n)=
\sum_{h=0}^l(-1)^hq^{nh}\left[\begin{matrix}l\cr h\end{matrix}\right]_q\,
\Sym_x\Big(\Delta_l\cdot\prod_{r=1}^h(q^{l-1-n}x_r-1)\cdot\prod_{s=h+1}^l(x_s-q^{l-1-n})\Big).
$$
We must prove that $F(x_1,\dots,x_l,0)$ vanishes.
The Pascal identity for the $q$-Gaussian binomial coefficient implies that
\begin{align*}
F(x_1,\dots,x_l,n)
&=\Sym_x\left(\sum_{h=0}^{l-1}(-1)^hq^{(n-1)h}\left[\begin{matrix}l-1\cr h\end{matrix}\right]_q\cdot
\Delta_l\cdot
\prod_{r=1}^h(q^{l-1-n}x_r-1)\cdot
\prod_{s=h+1}^l(x_s-q^{l-1-n})
\right.-\\
&\quad \sum_{h=0}^{l-1}(-1)^hq^{n+l-1+(n-1)h}\left[\begin{matrix}l-1\cr h\end{matrix}\right]_q\cdot
\left.\Delta_l\cdot
\prod_{r=1}^{h+1}(q^{l-1-n}x_r-1)\cdot
\prod_{s=h+2}^l(x_s-q^{l-1-n})
\right).
\end{align*}
We first check that $F(x_1,\dots,x_l,n)$ does not depend on $x_1,\dots,x_l$ by induction on $l$.
Set $F(l-1,n-1)=F(x_1,\dots,x_{l-1},n-1)$.
The induction hypothesis implies that
\begin{align*}
F(x_1,\dots,x_l,n)
&=F(l-1,n-1)\sum_{k=1}^l\Bigg(
\prod_{\substack{0<r\leqslant l\\r\neq k}}\frac{q^{-2}x_r-x_k}{x_r-x_k}(x_k-q^{l-1-n})-\\
&\quad q^{n+l-1}\prod_{\substack{0<s\leqslant l\\s\neq k}}\frac{q^{-2}x_k-x_s}{x_k-x_s}
(q^{l-1-n}x_k-1)\Bigg).
\end{align*}
We define
$$A=\prod_{r=1}^l\frac{q^{-2}x_r-z}{x_r-z}(z-q^{l-1-n})\frac{dz}{z}
\quad,\quad
B=-q^{n+l-1}\prod_{s=1}^l\frac{q^{-2}z-x_s}{z-x_s}(q^{l-1-n}z-1)\frac{dz}{z}.$$
We have
\begin{align*}
(1-q^{-2})\,F(x_1,\dots,x_l,n)
&=F(l-1,n-1)\sum_{k=1}^l\big(\res_{x_k}A+\res_{x_k}B\big)\\
&=-F(l-1,n-1)\big(\res_0A+\res_\infty A+\res_0B+\res_\infty B\big)\\
&=F(l-1,n-1)(q^{-n-l-1}+q^{n+l-1}-q^{n-l-1}-q^{-n+l-1})\\
&=q^{-1}F(l-1,n-1)(q^n-q^{-n})(q^{l}-q^{-l}).
\end{align*}
This proves our claim, and that 
$$(1-q^{-2})\,F(l,n)=q^{-1}F(l-1,n-1)(q^n-q^{-n})(q^{l}-q^{-l}).$$
Hence $F(l,0)=0$, proving the lemma.

\medskip

\section{Cohomological Hall algebra}
We define the rational Borel-Moore homology $H_\bullet(\-\,,\bbQ)$ of a stack as in \cite{KV19}. 
It is the dual of the cohomology with compact support $H^\bullet_c(\-\,,\bbQ)$.
By the Borel-Moore homology of a
dg-stack we'll mean the Borel-Moore homology of its truncation.
Similarly, the fundamental class of a dg-stack in Borel-Moore 
homology will mean the fundamental class of its truncation.
The results in the previous sections have an analogue by replacing everywhere the rational Grothendieck group 
$\G_0(\text{-})\otimes\bbQ$ by the rational
Borel-Moore homology $H_\bullet(\text{-},\bbQ)$. 
Let us explain this briefly.
For each linear algebraic group $G$ we set $H_G^\bullet=H^\bullet(BG,\bbQ)$.
If $G=\bbC^\times$ then  $H_G^\bullet=\bbQ[\hbar]$ where $\hbar=\c_{1,G}$ 
is the first Chern class of the linear character $q$.
If $G=\T=\bbC^\times\times\bbC^\times$ then
$H_\G^\bullet=\bbQ[\hbar,\varepsilon]$ where $\hbar$, $\varepsilon$ 
are the first Chern classes of the characters 
$q,$ $t$ of weight $(1,0)$ and $(0,1)$.
We abbreviate $\A=H^\bullet_{\bbC^\times}$.

\smallskip

\subsection{The Yangian of $\frakg_Q$}

Given an homogeneous weight function $\bar Q_1\to\X^*(\T)$, we consider 
the dg-moduli stack $T^*\calX$ of representations of the preprojective algebra $\Pi_Q$.
The $\R$-algebra structure on $H_\bullet(T^*\calX,\bbQ)$ and
$H_\bullet(T^*\calX,\bbQ)_\Lambda$ is as in \cite{SV18}, see also \cite{KV19}, \cite{Y18}. 
The first algebra is called the
\emph{cohomological Hall algebra} of $\Pi_Q$, and the second one the
\emph{nilpotent cohomological Hall algebra} of $\Pi_Q$. 
Let us denote them by $\bfH(\Pi_Q)$ and $\bfN\bfH(\Pi_Q)$.

\smallskip

Given a $\T$-invariant potential potential $W$ on a quiver $Q$, the deformed
cohomological Hall algebras $\bfH(Q)_\T$ and $\bfH(Q,W)_\T$ of the quiver $Q$, with and without potential,
are defined in \cite{KS} in the particular case where $\T=\{1\}$.
They are studied further in several works, in particular in 
\cite{D16} in the triple quiver case.

\smallskip

Now, let the quiver $Q$ be of Kac-Moody type. We define
$\bfY^+$ to be the positive part of the Yangian of $\frakg_Q$.
It is an $\R$-algebra generated by elements
$\e_{i,r}$ with $i\in Q_0$ and $ r\in\bbN$ modulo some relations which are analoguous to the relations (a) to (c) 
in \S\ref{sec:QLG}.
Let $[T^*\calX_{\alpha_i}]$ be the fundamental class in 
$\bfN\bfH(\Pi_Q)_{\alpha_i}=H_\bullet(T^*\calX_{\alpha_i}\,,\,\bbQ).$

\smallskip

The following theorem is analogous to  Theorem \ref{thm:A}.

\smallskip

\begin{Theorem}\label{thm:C}
Fix a normal weight function on $\bar Q$ in $\X^*(\bbC^\times)$.
\begin{itemize}[leftmargin=8mm]
\item[$\mathrm{(a)}$] 
There is a surjective $\bbN^{Q_0}$-graded $\R$-algebra homomorphism 
$\phi:\bfY^+\to\bfN\bfH(\Pi_Q)$ such that
$\phi(\e_{i,r})=(\c_{1,G_{\alpha_i}})^r\cup[T^*\calX_{\alpha_i}]$
for all $i\in Q_0$ and $ r\in\bbN.$
\item[$\mathrm{(b)}$] 
If $\bfQ$ is of finite or affine type but not of type $A^{(1)}$, then the map $\phi$ is injective.
\qed
\end{itemize}
\end{Theorem}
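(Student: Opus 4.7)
The plan is to mirror the proof of Theorem A in the cohomological setting, with the Grothendieck group replaced by Borel–Moore homology and the multiplicative relations of the Yangian replacing the Drinfeld relations of the quantum toroidal algebra. The essential geometric inputs are already provided by \cite{SV18}, which gives the cohomological analogues of Proposition \ref{prop:generators} (generation by dimensions in $\bbN\alpha_i$), Proposition \ref{prop:TFT} (torsion freeness over the equivariant cohomology ring) and Lemma \ref{lem:red} (freeness over $\A$).

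First, I would introduce a cohomological shuffle algebra $\bfS\bfH^\diamond_\A$ as the $\A$-module $\bigoplus_\beta H^\bullet_{G_{\beta,c}}\otimes_\A\Frac(\A)$ equipped with a shuffle product whose kernel
$$\zeta^\diamond_{\alpha,\gamma}(x)=\prod_{\substack{i,j\in Q_0\\ i<j}}\prod_{\substack{0<r\le a_i\\ b_j\ge s>a_j}}\frac{x_{i,r}-x_{j,s}-a_{i,j}\hbar}{x_{i,r}-x_{j,s}}\cdot\prod_{i\in Q_0}\prod_{\substack{0<r\le a_i\\ b_i\ge s>a_i}}\frac{x_{i,r}-x_{i,s}-2\hbar}{x_{i,r}-x_{i,s}}$$
is the additive analogue of \eqref{kernel}. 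The constants here reflect the normal weight function on $\bar Q$. Then, exactly as in the proof of Lemma \ref{lem:induction}, I would construct an $\R$-algebra embedding $\rho:\bfN\bfH(\Pi_Q)\to\bfS\bfH^\diamond_\A$ by taking characters of appropriate $G_\beta$-representations; the injectivity uses the cohomological torsion freeness statement from \cite{SV18}, which plays the role of Corollary \ref{cor:TF}.

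Next, I would establish by direct computation the additive analogue of Proposition \ref{prop:Shuffle}: namely that there is an $\R$-algebra homomorphism $\psi:\bfY^+\to\bfS\bfH^\diamond_\A$ sending $\e_{i,r}$ to $x_{i,1}^r$ (and more generally divided powers to Hall--Littlewood-type symmetric polynomials in the additive variables). The Drinfeld and Serre relations of $\bfY^+$ reduce to shuffle identities, checked along the same lines as Lemma \ref{lem:UST} in Appendix \ref{app:A}. Since $\psi(\bfY^+)\subset\rho(\bfN\bfH(\Pi_Q))$ (this is clear in dimension $\alpha_i$, and the right-hand side is closed under shuffle), we obtain the map $\phi:\bfY^+\to\bfN\bfH(\Pi_Q)$, with the prescribed value $(\c_{1,G_{\alpha_i}})^r\cup[T^*\calX_{\alpha_i}]$ on generators. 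Surjectivity of $\phi$ then follows from the cohomological analogue of Proposition \ref{prop:generators} (stated in \cite{SV18}): the algebra $\bfN\bfH(\Pi_Q)$ is generated in dimensions $\bbN\alpha_i$, and in each such dimension surjectivity is immediate from Lemma \ref{lem:A1}'s additive analogue.

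For injectivity in part (b), I would use a graded dimension count. By \cite[prop.~1.5, 1.6]{E03}, when $\bfQ$ is of finite or affine type but not $A_1^{(1)}$, the classical limit $\bfY^+_1$ is the universal enveloping algebra of $L^+\frakn_Q$, so its graded Hilbert series equals the plethystic exponential of $\sum_\beta p_\beta(q^2)\,q^{2\alpha}z^\beta$ with $p_\beta$ the Kac polynomials. On the other hand, by \cite[thm.~5.4]{SV18}, the graded dimensions of $\bfN\bfH(\Pi_Q)$ are given by exactly the same plethystic exponential. Since $\phi$ is surjective and the graded dimensions on both sides agree, $\phi$ is injective. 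The main obstacle is verifying the shuffle identities implementing the Serre relations in the additive setting (analogous to the computations in Appendix \ref{app:A}), but this is routine rather than structurally novel; the conceptual work has already been done in \cite{SV18} and \cite{E03}.
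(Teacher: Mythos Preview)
Your proposal is correct and follows essentially the same approach as the paper. The paper's own treatment of Theorem~\ref{thm:C} is a one-paragraph sketch: it states that the proof is analogous to that of Theorem~\ref{thm:A}, with the geometric lemmas supplied by \cite{SV18} and \cite{D16}, and the shuffle-algebra verification of the Yangian relations done in \cite[App.~A]{Y18}; your outline fleshes out exactly this structure. Two minor remarks: you should also cite \cite{D16} alongside \cite{SV18} for the geometric inputs (in particular the dimensional-reduction comparison in cohomology), and the shuffle computation you propose to carry out by hand is already in \cite[App.~A]{Y18}, so you can simply cite it. Your invocation of \cite{E03} for the classical limit of $\bfY^+$ is slightly off, since that reference treats the quantum current algebra rather than the Yangian; the Yangian analogue (that $\bfY^+/\hbar\bfY^+\simeq U(\frakn_Q[s])$) is standard, but you should source it appropriately or note that it follows from the same argument.
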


\smallskip

The proof of Theorem \ref{thm:A} relies on two types of arguments : 
the geometric lemmas in \S\ref{sec:ProofA}, and the combinatorial Lemma $\ref{lem:UST}$
proved in Appendix \ref{app:A}.
The proof of Theorem \ref{thm:C} is similar : the geometric lemma are proved in \cite{SV18}, \cite{D16},
and the analogue of the computation in Appendix \ref{app:A} is done in \cite[App.~A]{Y18}.

\smallskip

\subsection{The Yangian of $\widehat{\fraks\frakl}(m|n)$}\label{sec:SY}

Let the quiver $Q$, the potential $W$ 
and the $\T$-action on the representation space of $Q$ be as in \S\ref{sec:super}.
Let $\pmb\calY^+_\T$ be the positive part of the
affine Yangian of type $\fraks\frakl(m|n)$ with $(m,n)\neq(0,0)$  and $mn\neq 1,2$.
It is the $H_\T^\bullet$-algebra
generated by 
$\e_{i,r}$ with $i\in Q_0$ and $r\in\bbN$, subject to the defining relations $\mathrm{(a)}$ to $\mathrm{(d)}$ below,
see \cite{U19}.

\begin{itemize}[leftmargin=8mm]

\item[$\mathrm{(a)}$] if $a_{i,j}\neq 0$, then we have the Drinfeld relation
$$[\e_{i,r}\,,\,\e_{j,s+1}]-[\e_{i,r+1}\,,\,\e_{j,s}]=
-a_{i,j}\hbar\,\{\e_{i,r}\,,\,\e_{j,s}\}
+m_{i,j}\varepsilon\,[\e_{i,r}\,,\,\e_{j,s}],$$

\item[$\mathrm{(b)}$] if $a_{i,j}=0$, then we have the Drinfeld relation
$$[\,\e_{i,r}\,,\,\e_{j,s}\,]=0,$$

\item[$\mathrm{(c)}$]  if $i$ even and $j-i=\pm 1$, then we have the cubic Serre relation
$$\Sym_r[\,\e_{i,r_1}\,,\,[\,\e_{i,r_2}\,,\,\e_{j,s}\,]]=0,$$

\item[$\mathrm{(d)}$]  if $i$ odd and $i-1$, $i+1$ even, then we have the quartic Serre relation
$$\Sym_r[\,\e_{i,r_1}\,,\,[\,\e_{i+1,s_1}\,,\,[\e_{i,r_2}\,,\,\e_{i-1,s_2}\,]]]=0.$$
\end{itemize}
We used the following commutators
$$[\,a\,,\,b\,]=ab-(-1)^{|a|\,|b|}ba
\quad,\quad
\{\,a\,,\,b\,\}=ab+(-1)^{|a|\,|b|}ba.$$

\smallskip

Let $\gr(\-\,)$ denote the associated graded for the \emph{topological filtration}, see below for more details. 
By \cite[prop.5.3, cor.5.6]{P19} there is a Riemann-Roch map
$$\gr\bfK(Q,W)_\T\to\bfH(Q,W)_\T
\quad,\quad
\gr\bfK(Q)_\T\to\bfH(Q)_\T.$$
It is a ring homomorphism. 
We have a decomposition
$$\bfH(Q)_\T=\bigoplus_\beta \bfH(Q)_{\beta,\T}
\quad,\quad
\bfH(Q)_{\beta,\T}=H_\bullet(\calX_{\beta,\T}\,,\bbQ).$$
Let $[\calX_{\beta,\T}]$ be the fundamental class in $\bfH(Q)_{\beta,\T}$.
Let $i:\pi_0(\calW_{\beta,\T})\to\calX_{\beta,\T}$ be the inclusion of the zero fiber (underived) of the map 
$w_\beta:\calX_{\beta,\T}\to\bbC$.
Let $\psi_{w_\beta},\varphi_{w_\beta}[-1]: D^\b_c(\calX_{\beta,\T})\to D^\b_c(\calX_{\beta,\T})$ be
the nearby-cycle and vanishing-cycle functors. 
By definition of the cohomological Hall algebra of the quiver with potential $(Q,W)$, see, e.g., \cite{D17a},
we have the decomposition
$$\bfH(Q,W)_\T=\bigoplus_\beta \bfH(Q,W)_{\beta,\T}
\quad,\quad
\bfH(Q,W)_{\beta,\T}=H_\bullet(\calX_{\beta,\T}\,,\,\varphi_{w_\beta}\bbQ[-1]).$$
There are canonical morphisms of functors 
$$\psi_{w_\beta}[-1]\to\varphi_{w_\beta}[-1]\to i^*,$$
see \cite[(8.6.7)]{KS90}.
It yields a map
$$
H^\bullet_c(\calX_{\beta,\T}\,,\,\varphi_{w_\beta}\bbQ[-1])\to 
H^\bullet_c(\pi_0(\calW_{\beta,\T})\,,\,\bbQ).$$
Recall that we defined
$$H_\bullet(\calW_{\beta,\T}\,,\,\bbQ)=H_\bullet(\pi_0(\calW_{\beta,\T})\,,\,\bbQ).$$
Taking the dual, we get a map
\begin{align*}
b:H_\bullet(\calW_{\beta,\T}\,,\,\bbQ)\to 
H_\bullet(\calX_{\beta,\T}\,,\,\varphi_{w_\beta}\bbQ[-1])=\bfH(Q,W)_{\beta,\T}.
\end{align*}
Let $[\calW_{\beta,\T}]$ denote both the fundamental class of $\pi_0(\calW_{\beta,\T})$ in 
$H_\bullet(\calW_{\beta,\T}\,,\,\bbQ)$ and its image in $\bfH(Q,W)_{\beta,\T}$ by the map $b$.
The following theorem is analogous to Lemma \ref{lem:super}
and Theorem \ref{thm:B}.

\smallskip

\begin{Theorem}\label{thm:D}\hfill
\begin{itemize}[leftmargin=8mm]
\item[$\mathrm{(a)}$]
There is an $\bbN^{Q_0}$-graded $H_\T^\bullet$-algebra homomorphism
$\psi_0:\pmb\calY^+_\T\to \bfH(Q)_\T$ such that
$\psi_0(\e_{i,r})=(\c_{1,G_{\alpha_i}})^r\cup[\calX_{\alpha_i,\T}]$
for all $i\in Q_0$ and $ r\in\bbN$.
\item[$\mathrm{(b)}$]
There is an $\bbN^{Q_0}$-graded $H_\T^\bullet$-algebra homomorphism
$\psi:\pmb\calY^+_\T\to \bfH(Q,W)_\T$
such that
$\psi(\e_{i,r})=(\c_{1,G_{\alpha_i}})^r\cup[\calW_{\alpha_i,\T}]$
for all $i\in Q_0$ and $ r\in\bbN$.
\end{itemize}
\end{Theorem}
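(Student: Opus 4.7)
\subsection*{Proof plan for Theorem \ref{thm:D}}

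My plan is to prove parts (a) and (b) in parallel with the K-theoretic arguments of Lemma \ref{lem:super} and Theorem \ref{thm:B}, but systematically replacing $K$-theoretic localization by equivariant Borel-Moore localization and characters of representations by equivariant Chern characters (Euler classes). The cohomological Hall algebra of $(Q,W)$ is defined via vanishing cycles as recalled in the text, and the map $b$ from $H_\bullet(\calW_{\beta,\T},\bbQ)$ to $\bfH(Q,W)_{\beta,\T}$ coming from $\psi_{w_\beta}[-1]\to\varphi_{w_\beta}[-1]\to i^*$ is the replacement for the l.c.i.~pushforward $(i_W)_*$ used in the K-theoretic setting.

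For part (a), the first step is the shuffle realization. After localization to the maximal torus of $G_{\beta,\T}$, the $H_\T^\bullet$-module $\bfH(Q)_\T=\bigoplus_\beta H_\bullet(\calX_{\beta,\T},\bbQ)$ embeds into $\bigoplus_\beta\mathrm{Frac}(H_{T_{\beta,\T}}^\bullet)^{W_\beta}$, and the twisted induction product $\circ$ becomes a shuffle product whose kernel is obtained from the Weyl-type Euler class computation of $[\bar X_U^*]$, $[\fraku^*]$ and $\Omega$ carried out in \S\ref{sec:ProofA}, with each multiplicative factor $(1-x/q_h t_h y)$ replaced by the additive factor $(\c_1(q_h t_h) + y - x)$, i.e.\ $(\hbar a_{i,j}/2 + \varepsilon m_{i,j}+y-x)$ after using the normal weight function. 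This will produce a shuffle kernel with exactly the rational functions $\zeta_{i,j}(u,v)$ appropriate for the affine Yangian of $\widehat{\fraksl}(m|n)$. The second step is to verify that the affine Yangian relations (a)--(d) hold in this shuffle algebra. The quadratic Drinfeld relations are direct; the cubic Serre relation for even $i$ reduces to the computation in \cite[App.~A]{Y18}; the quartic Serre relation for odd $i$ is the additive degeneration of the computation I would carry out as in the proof of Lemma \ref{lem:super}, by reducing it to the vanishing of a symmetrization of an explicit rational function $P(a,b_1,b_2,c)/(b_1-b_2)$ and its partner $Q(a,b_1,b_2,c)/(b_1-b_2)$, depending on the chosen ordering of $i-1,i,i+1$. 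This yields $\psi_0$.

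For part (b), I would construct $\psi$ from $\psi_0$ by lifting through the map $b$. Concretely, I define classes $[\calW_{\alpha_i,\T}]\in\bfH(Q,W)_{\alpha_i,\T}$ and set $\psi(\e_{i,r})=(\c_{1,G_{\alpha_i}})^r\cup[\calW_{\alpha_i,\T}]$, and I must check that the affine Yangian relations (a)--(d) hold for these classes. Following the strategy of the proof of Theorem \ref{thm:B}, it suffices to prove that the map $b$ is compatible with the two products in the three low-rank dimension vectors where each relation lives, namely $\beta=\alpha_i+\alpha_j$, $\beta=2\alpha_i+\alpha_j$ for $i$ even and $j=i\pm 1$, and $\beta=\alpha_{i-1}+2\alpha_i+\alpha_{i+1}$ for $i$ odd, and then to establish injectivity of $b$ on the image of $\psi_0$ in each of those dimensions. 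In the cases where $w_\beta=0$, the map $b$ is an isomorphism and there is nothing to prove. In the remaining cases, the dimensional reduction for critical cohomology of \cite{D17a} identifies $\bfH(Q,W)_{\beta,\T}$ with the BM homology of an explicit substack $\calY_{\beta,\T}\subset\calX_{\beta,\T}$ defined by vanishing of a regular sequence extracted from $dW$, and $b$ is identified with the proper pushforward along $\calY_{\beta,\T}\hookrightarrow\calX_{\beta,\T}$. Torsion-freeness of $H_\bullet(\calY_{\beta,\T},\bbQ)$ over $H_{G_{\beta,\T}}^\bullet$, proved by the $\T$-equivariant localization theorem (the $\T$-fixed loci of $X_\beta$ and $Y_\beta$ both reduce to $\{0\}$) combined with the Mayer--Vietoris/Koszul argument used in the proof of Theorem \ref{thm:B}, then gives the generic injectivity of $b$ and hence the desired factorization.

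The hard part will be the quartic Serre relation in part (a). The combinatorial computation mirrors the one in Appendix \ref{app:A} but in the additive (rational, not trigonometric) regime with the super sign $(-1)^{|i|\,|j|}$ built into the kernels, and one must handle both possible orderings of the odd vertex $i$ relative to its even neighbours $i\pm 1$; the rest of the argument, including the dimensional-reduction step and the low-rank injectivity check in part (b), is a routine transposition of the K-theoretic proof of Theorem \ref{thm:B} via the Riemann--Roch comparison mentioned before the statement.
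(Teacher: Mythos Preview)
Your plan for part (a) is essentially what the paper does: it simply says the proof is similar to that of Lemma~\ref{lem:super}, i.e.\ one redoes the shuffle computation in the additive regime and checks the Yangian relations directly.

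For part (b), however, your approach diverges from the paper's and has a genuine gap. You want to mimic the proof of Theorem~\ref{thm:B} by using a map playing the role of the $K$-theoretic pushforward $\rho=(i_W)_*:\KHA(Q,W)_\T\to\KHA(Q)_\T$: you would check relations in $\bfH(Q)_\T$ via $\psi_0$, then lift them to $\bfH(Q,W)_\T$ by injectivity in low rank. But the map $b$ you invoke goes from $H_\bullet(\calW_{\beta,\T},\bbQ)$ \emph{into} $\bfH(Q,W)_{\beta,\T}$, not from $\bfH(Q,W)_\T$ to $\bfH(Q)_\T$; it is not the cohomological analogue of $(i_W)_*$. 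The map you actually need, an $H^\bullet_\T$-algebra homomorphism $\rho:\bfH(Q,W)_\T\to\bfH(Q)_\T$, is precisely the content of Conjecture~\ref{conj:B} and is not available. Your later sentence identifying $b$ with ``the proper pushforward along $\calY_{\beta,\T}\hookrightarrow\calX_{\beta,\T}$'' conflates two different maps with different targets.

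The paper's route for (b) bypasses this difficulty entirely: it does \emph{not} reprove the relations in cohomology, but instead transports the already established $K$-theoretic map $\psi:\pmb\calU^+_\T\to\KHA(Q,W)_\T$ of Theorem~\ref{thm:B} to cohomology via Riemann--Roch. Concretely, one equips $\pmb\calU^+_\T$ with a filtration so that $\pmb\calY^+_\T\to\gr\pmb\calU^+_\T$, checks that $\psi$ respects the topological filtration, and then composes $\gr\psi$ with the Riemann--Roch algebra homomorphism $\tau:\gr\KHA(Q,W)_\T\to\bfH(Q,W)_\T$ from \cite[cor.~5.6]{P19}. The map $b$ enters only to identify $\tau([\calO_{\calW_{\alpha_i,\T}}])$ with the fundamental class $[\calW_{\alpha_i,\T}]$. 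This approach is shorter and avoids needing the conjectural $\rho$; its cost is that part (b) is no longer logically independent of the $K$-theoretic Theorem~\ref{thm:B}.
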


\begin{proof}
The proof of (a) is similar to the proof of Lemma \ref{lem:super}.
Let us concentrate on (b).

\smallskip

Given a linear algebraic group $G$, let
$I_G$ be the augmentation ideal of $\R_G$.
Let $X$ be a quasi-projective $G$-scheme and $\calX=[X\,/\,G]$ the quotient stack.
The $I_G$-adic filtration of the $\R_G$-module $\G_0(\calX)$ is called the topological filtration,
see, e.g., \cite[\S 6]{EG00}.
Let $\widehat{\G_0(\calX)}$ be the completion with respect to this filtration.
Assume that $\calX$ is pure of dimension $n$.
We define 
$$\widehat{H_\bullet(\calX,\bbQ)}=\prod_{a\geqslant 0}H_{2n-a}(\calX,\bbQ).$$
Composing the Riemann-Roch isomorphism in \eqref{RR} with the cycle map we get a map
\begin{align*}
\widehat\tau_\calX:\widehat{\G_0(\calX)}\to\widehat{H_\bullet(\calX,\bbQ)}\end{align*}
which satisfies the usual properties, see \cite[thm.~3.1]{EG00} and \cite[thm.~18.3]{F84}.
Taking the associated graded with respect to the topological filtration,
we get a map
\begin{align}\label{RR2}
\tau_\calX:\gr{\G_0(\calX)}\to H_\bullet(\calX,\bbQ)\end{align}
which is covariant for proper morphisms, contravariant for l.c.i.~morphisms and which takes the class of
$\calO_\calY$ to the fundamental class $[\calY]$ for any pure dimensional closed substack $\calY$ of $\calX$. 
The singularity K-theory group fits into an exact sequence
$$\xymatrix{\K_0(\calX)\ar[r]&\G_0(\calX)\ar[r]^-c&\G_0^\sg(\calX)\to 0}.$$
The topological filtration on $\G_0(\calX)$ induces a filtration on the quotient $\G_0^\sg(\calX)$.
Let  $\gr\,\G_0^\sg(\calX)$ be the associated graded. 

\smallskip

Coming back to the setting of the theorem, we define
$$\gr\,\bfK(Q,W)_\T=\bigoplus_\beta\gr\,\G_0^\sg(\calW_{\beta,\T}).$$
The map \eqref{RR2} yields a map
$$\tau_{\calW_{\beta,\T}}:\gr{\G_0(\calW_{\beta,\T})}\to H_\bullet(\calW_{\beta,\T},\bbQ).$$
By \cite[cor~5.6]{P19} there is an algebra homomorphism
$\tau:\gr\,\bfK(Q,W)_\T\to\bfH(Q,W)_\T$ such that the following square commutes
$$\xymatrix{
\gr{\G_0(\calW_{\beta,\T})}\ar[d]_-{\gr(c)}\ar[r]^{\tau_{\calW_{\beta,\T}}}&H_\bullet(\calW_{\beta,\T},\bbQ)\ar[d]^-b\\
\gr\,\bfK(Q,W)_\T\ar[r]^\tau&\bfH(Q,W)_\T
}$$
There is a filtration on the algebra 
$\pmb\calU^+_\T$ with associated graded $\gr\,\pmb\calU^+_\T$ and an algebra 
homomorphism $\pmb\calY^+_\T\to \gr\,\pmb\calU^+_\T$ such that the map
$\psi:\pmb\calU^+_\T\to \KHA(Q,W)_\T$ in Theorem \ref{thm:B} is compatible with the filtrations.
Composing $\tau$ with the associated graded
$\gr\psi:\gr\,\pmb\calU^+_\T\to \gr\KHA(Q,W)_\T$ we get a map
$$\psi:\pmb\calY^+_\T\to \bfH(Q,W)_\T$$
such that 
$$\psi(\e_{i,0})=\tau\big([\calO_{\calW_{\alpha_i,\T}}]\big)
=[\calW_{\alpha_i,\T}]\quad,\quad\forall i\in Q_0.$$
The map $\psi$ is the required algebra homomorphism.
\end{proof}

\smallskip

We also have the following analogue of  Conjecture \ref{conj:A}.

\smallskip

\begin{Conjecture}\label{conj:B}
There is an $H_\T^\bullet$-algebra homomorphism
$\rho:\bfH(Q,W)_\T\to\bfH(Q)_\T.$
The $H_\T^\bullet$-algebra homorphism $\psi_0:\pmb\calY^+_\T\to \bfH(Q)_\T$ 
factorizes through the map $\rho$ into 
the $H_\T^\bullet$-algebra homorphism $\psi:\pmb\calY^+_\T\to \bfH(Q,W)_\T$ and the latter is injective.
\qed
\end{Conjecture}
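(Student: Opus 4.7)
The proof strategy mirrors that of Theorem \ref{thm:B} in the critical cohomology rather than K-theoretic setting, and splits into three tasks. The first task is to build the algebra homomorphism $\psi:\pmb\calY^+_\T\to \bfH(Q,W)_\T$ independently of $\rho$. I would do this by the reduction used in the proof of Theorem \ref{thm:B}: arguing as in Lemma \ref{lem:super} (whose Yangian analogue is essentially the shuffle realization of \cite{D16, Y18}) one first produces $\psi_0:\pmb\calY^+_\T\to \bfH(Q)_\T$, then verifies that the sharpest relations---Drinfeld in weights $\alpha_i+\alpha_j$, cubic Serre in $2\alpha_i+\alpha_j$, and quartic Serre in $\alpha_{i-1}+2\alpha_i+\alpha_{i+1}$---already hold in $\bfH(Q,W)_\T$ by a direct vanishing-cycle computation on the explicit small quiver stacks appearing in the proof of Theorem \ref{thm:B}, using Thom-Sebastiani to split off the non-critical directions.

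The second task is to construct $\rho:\bfH(Q,W)_\T\to\bfH(Q)_\T$ and check $\rho\circ\psi=\psi_0$. This is the most delicate existence question: in the K-theoretic world Lemma \ref{lem:pad} provides $\rho$ as the pushforward along the l.c.i.~closed immersion $i_W:\calW_\T\to\calX_\T$, but in the cohomological setting one needs a morphism in the direction opposite to the natural map $b:H_\bullet(\calW_{\beta,\T},\bbQ)\to\bfH(Q,W)_{\beta,\T}$ used in Theorem \ref{thm:D}. My preferred approach is to apply dimensional reduction \eqref{dimred0} (or rather its cohomological counterpart, cf.~\cite{D17a}) selectively along the even loop variables $\omega_i$; after this reduction $\bfH(Q,W)_\T$ rewrites as the Borel-Moore homology of a closed substack of a smaller-quiver moduli stack, and the evident closed pushforward provides $\rho$. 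Compatibility with the shuffle product should then follow by base change, paralleling Lemma \ref{lemma:shuffle}. The identity $\rho\circ\psi=\psi_0$ reduces to a check on the generators $\e_{i,r}$ in dimension $\alpha_i$, where $w_{\alpha_i}\equiv 0$ identically and both sides equal $(\c_{1,G_{\alpha_i}})^r\cup[\calX_{\alpha_i,\T}]$.

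The injectivity of $\psi$ is the main obstacle, and is the reason the statement is left as a conjecture. In the non-super case of Theorem \ref{thm:C}, injectivity came from a graded dimension count: $\bfU^+_1$ is the enveloping algebra of an appropriate loop algebra, whose graded dimensions are governed by Kac polynomials and match those of $\bfN\bfH(\Pi_Q)$ by \cite[thm.~5.4]{SV18}. In the super setting, the analogous strategy would require three ingredients that are only partially available in the literature: a PBW theorem for $\pmb\calY^+_\T$ with a basis indexed by the positive roots of $\widehat{\fraks\frakl}(m|n)$, including appropriate multiplicities for imaginary roots coming from the center; a super version of Kac's theorem giving the graded Euler characteristic of absolutely indecomposable representations of $(Q,W)$; and a Davison-Meinhardt style BPS decomposition of $\bfH(Q,W)_\T$ compatible with these root data. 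The presence of odd nodes introduces imaginary odd roots and quartic Serre relations which have no direct analogue in the classical proof, so a new argument, likely via a deformation of the big torus $\T$ and a super version of the torsion-freeness in Proposition \ref{prop:TFT}, will be needed.
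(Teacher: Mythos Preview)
This statement is a \emph{conjecture} in the paper, stated without proof; the \qed{} mark simply closes the environment. So there is no paper's proof to compare against, and you correctly recognise this yourself when you write that injectivity ``is the reason the statement is left as a conjecture.'' What you have written is therefore not a proof but an outline of a possible attack, and should be read as such.

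A few specific remarks on your outline. Your first task---constructing $\psi$---is already carried out in the paper as Theorem~\ref{thm:D}(b), though by a different route than you suggest: the paper obtains $\psi$ by passing the K-theoretic map of Theorem~\ref{thm:B} through the Riemann--Roch map $\tau:\gr\,\bfK(Q,W)_\T\to\bfH(Q,W)_\T$ of \cite{P19}, rather than by a direct vanishing-cycle computation. Your proposed direct route is plausible but would require checking that the small-weight pieces of $\bfH(Q,W)_\T$ are torsion-free over $H^\bullet_{G_{\beta,\T}}$ in Borel--Moore/critical cohomology, analogous to the K-theoretic arguments in the proof of Theorem~\ref{thm:B}; this is not automatic.

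Your second task---constructing $\rho$---is genuinely open in the paper. Your idea of partial dimensional reduction along the even loops $\omega_i$ is reasonable, but note that after such a reduction the remaining potential (the odd-vertex terms $y_ix_ix_{i-1}y_{i-1}$) is still nontrivial, so you would not land directly in the Borel--Moore homology of a quiver stack without potential; you would still need to produce a map out of a critical cohomology. The K-theoretic analogue $\rho=(i_W)_*$ in Lemma~\ref{lem:pad} has no obvious direct counterpart for vanishing-cycle sheaves, which is exactly why the existence of $\rho$ is part of the conjecture rather than a lemma.

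Your discussion of injectivity accurately identifies the missing ingredients (a PBW theorem for $\pmb\calY^+_\T$, a super Kac-polynomial count, and a BPS-type decomposition for $\bfH(Q,W)_\T$), and these are indeed the substantive obstacles.
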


\medskip

\section{Mayer-Vietoris in equivariant K-theory}\label{sec:MayerVietoris}

Let $X$ be a $T$-equivariant quasi-projective variety with a covering $X=A\cup B$ where
$A$, $B$ are closed $T$-equivariant subsets of $X$. Let $Y=A\cap B$.
The following lemma is a consequence of the Mayer-Vietoris long exact sequence
for the K-theory of $T$-equivariant coherent sheaves. A well-known proof which goes back to
Quillen and Thomason uses the K-theory spectrum
of $T$-equivariant coherent sheaves. Let us indicate a simpler
proof using only the localization exact sequence in
equivariant K-theory. It is enough for our purpose.

\smallskip

\begin{Lemma} There is an exact sequence
$$\G_0([Y\,/\,T])\to\G_0([A\,/\,T])\oplus\G_0([B\,/\,T])\to\G_0([X\,/\,T])\to 0.$$
\end{Lemma}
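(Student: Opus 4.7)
The plan is to derive the Mayer--Vietoris sequence as a purely formal consequence of two long exact localization sequences in equivariant $K$-theory together with their naturality under a closed embedding of pairs. Set $U=X\setminus A$; since $X=A\cup B$ one has $U\subseteq B$ and $U=B\setminus Y$, so the inclusions $(B,Y)\hookrightarrow (X,A)$ form a morphism of pairs with common open complement $U$. Applying the long exact localization sequence (which we assume available) to each pair produces two long exact sequences
\begin{align*}
\cdots\to \G_1([U/T])\xrightarrow{\partial_X}\G_0([A/T])\xrightarrow{i_*}\G_0([X/T])\xrightarrow{j_U^*}\G_0([U/T])\to 0,\\
\cdots\to \G_1([U/T])\xrightarrow{\partial_B}\G_0([Y/T])\xrightarrow{l_*}\G_0([B/T])\xrightarrow{k_U^*}\G_0([U/T])\to 0,
\end{align*}
linked vertically by the proper pushforwards along $B\hookrightarrow X$ and $Y\hookrightarrow A$. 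The key structural input from the localization machinery is that this is a morphism of long exact sequences, so in particular $\partial_X=(k_{Y\to A})_*\circ\partial_B$ and $j_U^*\circ (j_{B\to X})_*=k_U^*$.

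Define the Mayer--Vietoris maps $g:\G_0([Y/T])\to\G_0([A/T])\oplus\G_0([B/T])$ by $g(y)=((k_{Y\to A})_*y,\,-l_*y)$ and $f:\G_0([A/T])\oplus\G_0([B/T])\to\G_0([X/T])$ by $f(a,b)=i_*a+(j_{B\to X})_*b$. The relation $f\circ g=0$ is immediate from the factorizations of $Y\hookrightarrow X$ through $A$ and through $B$. Surjectivity of $f$ is a standard two-step argument: for $x\in\G_0([X/T])$, surjectivity of $k_U^*$ yields $b$ with $k_U^*b=j_U^*x$; then $j_U^*(x-(j_{B\to X})_*b)=0$, so by exactness of the first sequence one writes $x-(j_{B\to X})_*b=i_*a$.

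For exactness in the middle, I would proceed as follows. Take $(a,b)$ with $i_*a+(j_{B\to X})_*b=0$. Restricting along $j_U^*$ gives $k_U^*b=0$, so by the second sequence there exists $y_0\in\G_0([Y/T])$ with $l_*y_0=b$. Substituting back, $i_*(a+(k_{Y\to A})_*y_0)=0$, hence by exactness of the first sequence there is $u\in \G_1([U/T])$ with $\partial_X u=a+(k_{Y\to A})_*y_0$. Setting $y_1:=\partial_B u$, the naturality identity $\partial_X=(k_{Y\to A})_*\partial_B$ gives $(k_{Y\to A})_*y_1=a+(k_{Y\to A})_*y_0$, while $l_*y_1=0$ by exactness of the second sequence. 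Then $y:=y_1-y_0$ satisfies $g(y)=(a,b)$, which closes the exactness.

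The only nontrivial ingredient is the naturality $\partial_X=(k_{Y\to A})_*\partial_B$; this is the essential reason we need the localization sequence to extend one step to $\G_1$, and it is the step I would expect to require the most care. Once one grants the long exact localization sequence together with its functoriality under closed embeddings of pairs (both of which are standard for the $K$-theory of coherent sheaves on quotient stacks of quasi-projective schemes), the rest of the argument is a formal diagram chase and does not appeal to the full $K$-theory spectrum.
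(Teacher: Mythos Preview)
Your argument is correct and is essentially the same as the paper's: both derive the Mayer--Vietoris sequence from a morphism of two localization long exact sequences linked by naturality of the boundary map, followed by the same diagram chase. The only difference is a symmetric choice --- you take the common open complement $U=X\setminus A$ and the pair $(B,Y)\hookrightarrow(X,A)$, while the paper uses $V=X\setminus B$ and $(A,Y)\hookrightarrow(X,B)$ --- which amounts to swapping the roles of $A$ and $B$.
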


\begin{proof} Set $U=X\setminus A$ and $V=X\setminus B$.
We have $U=B\setminus Y$ and $V=A\setminus Y$.
We consider the following commutative diagram 
$$\xymatrix{
V\ar@{^{(}->}[r]^-{j_2}\ar@{=}[d]&A\ar[d]_-{i_3}&\ar[l]_-{i^1}Y\ar[d]^-{i_2}\\
V\ar@{^{(}->}[r]^-{j_1}&X&\ar[l]_-{i^4}B
}$$
The localization long exact sequence yields the following commutative diagram
with exact rows
\begin{align*}
\xymatrix{
\G_1([V\,/\,T])\ar[r]^-\partial
\ar@{=}[d]&\G_0([B\,/\,T])\ar[r]^{i^4_*}&\G_0([X\,/\,T])\ar[r]^{j_1^*}&\G_0([V\,/\,T])\ar[r]\ar@{=}[d]&0\\
\G_1([V\,/\,T])\ar[r]^-\delta&\G_0([Y\,/\,T])\ar[r]^{i^1_*}\ar[u]^-{i^2_*}&\G_0([A\,/\,T])\ar[r]^{j_2^*}\ar[u]^-{i^3_*}&
\G_0([V\,/\,T])\ar[r]&0
}
\end{align*}
We consider the maps 
$$\alpha:\G_0([Y\,/\,T])\to\G_0([A\,/\,T])\oplus\G_0([B\,/\,T])\quad,\quad 
\beta:\G_0([A\,/\,T])\oplus\G_0([B\,/\,T])\to\G_0([X\,/\,T])$$ given by
$$\alpha=\begin{pmatrix}\ \ i^1_*\\-i^2_*\end{pmatrix}\quad,\quad
\beta=(i^3_*\ i^4_*).$$
The map $\beta$ is surjective because for each element $x\in\G_0([X/T])$ there is an element
$a\in\G_0([A/T])$ such that $j_2^*(a)=j_1^*(x)$. Hence $j_1^*(x-i^3_*(a))=0$. Thus, there is an element
$b\in \G_0([B/T])$ such that $i_*^4(b)=x-i^3_*(a)$. We also have $\beta\circ\alpha=0$. 
Thus, it is enough to check that $\Ker(\beta)\subset\Im(\alpha)$. To do that, fix an element $(a,b)$ in the kernel.
Then, we have $i_*^3(a)=-i_*^4(b)$. Thus $j_2^*(a)=j^*_1i_*^3(a)=-j_1^*i_*^4(b)=0$. Hence
$a=i_*^1(y)$ for some element $y\in \G_1([Y\,/\,T])$. We deduce that
$i_*^4(b+i_*^2(y))=i_*^4(b)-i_*^3(a)=0$. So $b+i_*^2(y)=\partial(v)=i_*^2\delta(v)$ for some $v\in\G_1([V\,/\,T])$.
Thus $b=i_*^2(-y+\delta(v))$. Hence
$$(a,b)=(i_*^1(y), i_*^2(-y+\delta(v)))=\alpha(y-\delta(z)).$$
\end{proof}
\end{appendices}

\smallskip

\bigskip

\bigskip

\end{document}